\titleformat{\subsection}{\it}{\thesubsection.\enspace}{1.5pt}{}
\titleformat{\subsubsection}{\it}{\thesubsubsection.\enspace}{1.5pt}{}
\newtheorem{theo}{Theorem}[section]
\newtheorem{lemm}[theo]{Lemma}
\newtheorem{defi}[theo]{Definition}
\newtheorem{prop}[theo]{Proposition}
\newtheorem{rema}{Remark}[section]
\numberwithin{equation}{section}
\def\th2{\frac{\theta}{2}}
\begin{document}

\title{On the dynamics of Navier-Stokes-Fourier equations\hspace{-4mm}}
\author{ Boling Guo$^1$, Binqiang Xie$^2*$}
\date{}
\maketitle
\begin{center}
\begin{minipage}{120mm}
\emph{\small $^1$Institute of Applied Physics and Computational Mathematics, China Academy of Engineering Physics,
 Beijing, 100088, P. R. China \\
$^2$Graduate School of China Academy of Engineering Physics, Beijing, 100088, P. R. China  }
\end{minipage}
\end{center}

\footnotetext{Email: \it gbl@iapcm.ac.cn(B.L.Guo), \it xbq211@163.com(B.Q.Xie).}
\date{}

\maketitle

\begin{abstract}

In this paper we are concerned with a non-isothermal compressible Navier-Stokes-Fourier model with density dependent viscosity that vanish on the vacuum. We prove the global existence of weak solutions with large data in the three-dimensional torus $\Omega=T^{3}$. The main point is that the pressure is given by $P=R\rho \theta$ without additional cold pressure assumption.

\vspace*{5pt}
\noindent{\it {\rm Keywords}}:
weak solutions; compressible non-isothermal model; global existence.

\vspace*{5pt}
\noindent{\it {\rm 2010 Mathematics Subject Classification}}:
76W05, 35Q35, 35D05, 76X05.
\end{abstract}


\section{Introduction}
\quad\quad A compressible and heat-conducting fluid governed by the Navier-Stokes-Fourier equations satisfies the following system in $R_{+}\times \Omega$:
\begin{gather}
  \partial_{t}\rho+{\rm div}(\rho u) =0,\label{1.1} \\
  \partial_{t}(\rho u)+{\rm div}(\rho u\otimes u)
  +\nabla P={\rm div}\mathbb{S}, \label{1.2}\\
  \partial_{t} (\rho E) + {\rm div}(\rho E u)+{\rm div} q+ {\rm div}(P u)={\rm div} (\mathbb{S}u),\label{1.3}
\end{gather}
where the functions $\rho, u,\theta $ represent the density,the velocity field, the absolute temperature. $P$ stands for the pressure, $\mathbb{S}$ denotes the viscous stress tensor. $\rho E=\rho e+ \frac{\rho |u|^{2}}{2}$ the total energy, $e$ the internal energy. $q$ the heat flux. Eqs. \eqref{1.1}, \eqref{1.2}, \eqref{1.3} respectively express the conservation of mass, momentum and total energy.

Our analysis is based on the following physically grounded assumptions:
\begin{itemize}
\item The viscosity stress tensor $\mathbb{S}$ is determined by the Newton's rheological law
\begin{equation}\label{1.4}
\mathbb{S}=2\mu(\rho) D(u)+\lambda(\rho) {\rm div}_{x} u \mathbb{I},
\end{equation}
where $3\lambda + 2\mu\geq 0$ and $D(u)= \frac{1}{2}(\nabla u+ \nabla^{T} u)$ denotes the strain rate tensor, we require $\lambda(\rho)= 2(\rho \mu^{\prime}(\rho)- \mu(\rho))$. For simplicity, we only consider a particular case $\mu(\rho)=\rho, \lambda(\rho)=0$.

\item A key element of the system \eqref{1.1}-\eqref{1.3} is pressure $P$, which obeys the following equation of state:
\begin{equation}\label{1.5}
P(\rho,\theta)=R\rho\theta,
\end{equation}
where R is the perfect gas constant, for simplicity, we set $R=1$. This assumption means ideal gas given by Boyle's law.
\end{itemize}

\begin{itemize}
\item In accordance with the second thermodynamics law, the form of the internal energy reads:
\begin{equation}\label{1.6}
e=C_{\nu}\theta ,
\end{equation}
where $C_{\nu}$ is termed the specific heat at constant volume, for simplicity, we set $C_{\nu}=1$.

\item The heat flux $q$ is expressed through the classical Fourier's law:
\begin{equation}\label{1.7}
q=-\kappa\nabla \theta ,
\end{equation}
where the heat conducting coefficient $\kappa$ is assumed to satisfy:
\begin{equation}\label{1.8}
\kappa(\rho,\theta)= \kappa_{0}(\rho,\theta)(1+\theta^{\alpha}),
\end{equation}
where $a\geq 2$, $\kappa_{0}$ is a continuous function of temperature and density satisfying:
$ C_{1} \leq \kappa_{0} (\rho, \theta) \leq \frac{1}{C_{1}}$, for some positive $C_{1}$.
\end{itemize}

Assuming smoothness of the flow the total energy equation \eqref{1.3} can be written using the equation for the thermal energy equation
\begin{equation}\label{1.9}
\partial_{t} (\rho e)+ {\rm div} (\rho e u) +  {\rm div} q=\mathbb{S}: \nabla u -P {\rm div} u .
\end{equation}

Finally, to complete the system \eqref{1.1}-\eqref{1.3}, the initial conditions are given by
\begin{equation}\label{1.10}
\rho(0,\cdot)=\rho_{0}, (\rho u)(0,\cdot)=m_{0}, \theta(0,\cdot)=\theta_{0},
\end{equation}
together with the compatibility condition:
\begin{equation}\label{1.11}
m_{0}=0~~on~the~set~~\{x\in \Omega|\rho_{0}(x)=0\}.
\end{equation}

Now we give the definition of a variational solution to \eqref{1.1}-\eqref{1.10}.

\begin{defi}
We call $(\rho, u, \theta)$ is as  a varational  weak solution to the problem \eqref{1.1}-\eqref{1.10}, if the following is satisfied.

(1)the density $\rho$ is a non-negative function satisfying the internal identity
\begin{equation}\label{1.12}
\int_{0}^{T} \int_{\Omega} \rho \partial_{t} \phi + \rho u\cdot \nabla \phi dx dt+ \int_{\Omega} \rho_{0} \phi(0)dx=0,
\end{equation}
for any test function $\phi\in \mathcal{D}([0,T)\times \overline{\Omega})$.

(2) The momentum equation holds in $D^{\prime}((0,T)\times\Omega)$, that means,
\begin{equation}\label{1.13}
\begin{aligned}
&\int_{\Omega}m_{0} \phi(0) dx+\int_{0}^{T} \int_{\Omega} \rho u \cdot \partial_{t} \phi + \rho (u\otimes u): \nabla \phi+ P {\rm div} \phi dx dt\\
&= \int_{0}^{T} \int_{\Omega} \mathbb{S}: \nabla \phi dx dt,~for~any ~\phi\in \mathcal{D}([0,T)\times \overline{\Omega}),
\end{aligned}
\end{equation}

(3) The temperature $\theta$ is a non-negative function satisfying
\begin{equation}\label{1.14}
\begin{aligned}
&\int_{0}^{T} \int_{\Omega} \rho \theta \partial_{t} \phi + \rho \theta \cdot \phi + \mathcal {K} (\theta) \Delta \phi dx dt \leq \\
&\int_{0}^{T} \int_{\Omega} (R \rho \theta - \mathbb{S}: \nabla u ) \phi dx dt + \int_{\Omega} \rho_{0} \theta_{0} dx=0,
\end{aligned}
\end{equation}
for any $\phi\in C^{\infty}([0,T]\times \Omega)$, $\phi \geq 0$, $\phi(T)=0$, where
\begin{equation*}
\mathcal{K} = \int_{0}^{T} \kappa (z) dz;
\end{equation*}

(4) The total energy inequality holds for a.a. $\tau\in (0,T)$ with
\begin{equation}\label{1.15}
\rho E (\tau) \leq \rho E(0),
\end{equation}
where
\begin{equation*}
\rho E(0) = \int_{\Omega} \frac{1}{2} \frac{|m_{0}|^{2}}{\rho_{0}}+ \rho_{0}\theta_{0};
\end{equation*}
\end{defi}

Now, we are ready to formulate the main result of this paper.

\begin{theo}\label{th1.1}
Let $\Omega$ be the periodic box $T^{3}$.  Assume that
the pressure $P$, the conductivity coefficient $\kappa$ and the viscosity coefficient $\mu$  satisfy the condition \eqref{1.4}-\eqref{1.8}. Assume the initial data $\rho_{0}, u_{0}, \theta_{0}$ satisfy
\begin{equation}\label{1.16}
\rho_{0} \geq 0,~~~\nabla \sqrt{\rho_{0}}\in L^{2}(\Omega),
\end{equation}
\begin{equation}\label{1.17}
\rho_{0} |u_{0}|^{2} \in L^{1}(\Omega), ~~\rho_{0} (1+ |u_{0}|^{2})\ln (1+ |u_{0}|^{2}) < \infty,
\end{equation}
\begin{equation}\label{1.18}
\theta_{0} \in L^{\infty}(\Omega), ~~0 < \underline{\theta} \leq \theta_{0} \leq \overline{\theta} ~~~for ~~a.e.~x\in \Omega.
\end{equation}
Then, for any given $T>0$, there exists a variational weak solution of \eqref{1.1}-\eqref{1.3} on the set $(0,T)\times \Omega$.
\end{theo}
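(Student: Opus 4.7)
The plan is to obtain the weak solution by a multi-level approximation scheme, derive uniform estimates based on the physical energy together with the Bresch--Desjardins (BD) entropy, and then successively pass to the limit. Since the chosen viscosities $\mu(\rho)=\rho$, $\lambda(\rho)=0$ satisfy the algebraic relation $\lambda(\rho)=2(\rho\mu'(\rho)-\mu(\rho))$, the system enjoys an additional BD conservation law which will be the backbone of every density compactness step.

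At the approximation level I would regularize \eqref{1.1}--\eqref{1.3} with parameters $(\varepsilon,\delta,\eta)$: add an artificial diffusion $\varepsilon\Delta\rho$ in the continuity equation, a cold pressure $\delta\rho^\gamma$ for some large $\gamma$ together with drag terms $r_0 u+r_1|u|^2u$ in the momentum balance, and a cut-off $\eta$ in the temperature equation to enforce $\theta\geq\underline{\theta}>0$. A Faedo--Galerkin scheme for the velocity, combined with a Banach fixed-point argument for $\rho$ and $\theta$ on short times, produces smooth approximate solutions at the deepest level, which extend to $[0,T]$ thanks to the energy bound.

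The a priori estimates are organized in two layers. Testing the momentum equation with $u$ and adding the total energy balance yields $\sqrt{\rho}\,u\in L^\infty_t L^2_x$, $\rho\theta\in L^\infty_t L^1_x$ and $\sqrt{\rho}\,D(u)\in L^2_{t,x}$. Testing with $\nabla\log\rho$ produces the BD entropy, from which
\begin{equation*}
\nabla\sqrt{\rho}\in L^\infty_t L^2_x,\qquad \sqrt{\rho}\,\nabla u\in L^2_{t,x}
\end{equation*}
are obtained uniformly in the regularization. The entropy balance associated to $\kappa(\rho,\theta)\geq C_1(1+\theta^\alpha)$ with $\alpha\geq 2$ yields $\nabla\theta^{\alpha/2}\in L^2_{t,x}$ and, via Sobolev embedding, a useful space-time integrability for $\theta$. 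Propagating the log-moment \eqref{1.17} along the flow gives a Mellet--Vasseur type bound $\rho(1+|u|^2)\ln(1+|u|^2)\in L^\infty_t L^1_x$, the decisive ingredient for controlling the convective term $\rho u\otimes u$ near vacuum.

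The limit passages are carried out in the order $\eta\to 0$, $\varepsilon\to 0$, $\delta\to 0$. Strong convergence of $\sqrt{\rho}$ in $L^2_{t,x}$ follows from $\nabla\sqrt{\rho}\in L^\infty_t L^2_x$ and an $H^{-s}$-bound on $\partial_t\sqrt{\rho}$ extracted from the continuity equation via Aubin--Lions. Strong convergence of $\sqrt{\rho}\,u$ is then obtained through the Mellet--Vasseur argument using the logarithmic moment, which is enough to pass to the limit in $\rho u\otimes u$. Compactness of the temperature comes from the $\nabla\theta^{\alpha/2}$ bound together with a time-derivative estimate read off from the weak formulation \eqref{1.14}, and this delivers strong convergence of the pressure $\rho\theta$ once $\sqrt{\rho}$ is already compact.

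The main obstacle, and what distinguishes this work from the standard theory, is removing the cold pressure $\delta\rho^\gamma$. Without it one loses the classical $L^p$ integrability of $\rho$ for $p>1$ on which the Lions--Feireisl effective viscous flux identity rests, so that identification of the weak limit of $\rho\theta$ requires a new argument. My plan is to bypass the effective viscous flux by using the BD quantity alone: the embedding $H^1\hookrightarrow L^6$ in three dimensions lifts $\sqrt{\rho}\in L^\infty_t H^1_x$ to $\rho\in L^\infty_t L^3_x$, while $\theta^{\alpha/2}\in L^2_t H^1_x$ (with $\alpha\geq 2$) yields enough spatial integrability of $\theta$ that H\"older places $\rho\theta$ strictly above $L^1_{t,x}$. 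This extra integrability, combined with the already established strong convergences of $\sqrt{\rho}$ and $\theta$, is intended to be enough to identify the limit of the pressure. Closing this chain while simultaneously preserving the temperature positivity and the Mellet--Vasseur log-moment under $\delta\to 0$ is expected to be the most delicate point of the proof.
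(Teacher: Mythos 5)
Your overall architecture (Faedo--Galerkin $\to$ energy $\to$ BD entropy $\to$ Mellet--Vasseur $\to$ nested limit passages) matches the paper's, but the proposal omits the two ingredients that are the actual content of the paper, and the regularization you choose re-introduces exactly the difficulty the theorem is designed to eliminate.

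\textbf{The quantum capillarity term is missing, and with it the whole Mellet--Vasseur machinery.} The central obstruction (which the paper states explicitly) is not \emph{deriving} a Mellet--Vasseur bound formally, but constructing an approximate scheme for which the bound actually \emph{holds}: for weak, non-smooth approximate solutions one cannot simply test the momentum equation with $(1+\ln(1+|u|^2))u$. The paper follows Vasseur--Yu and adds the quantum term $\kappa\rho\nabla\bigl(\tfrac{\Delta\sqrt\rho}{\sqrt\rho}\bigr)$ together with the drag terms $r_0u$, $r_1\rho|u|^2u$. The quantum term, via the BD entropy, upgrades $\sqrt\rho$ to $L^2_tH^2_x$ and $\nabla\rho^{1/4}$ to $L^4_{t,x}$; this is what makes $\nabla(\phi(\rho)u)\in L^2_{t,x}$ (Lemma 4.1), which in turn is what makes the mollification argument behind Lemma 4.3 legitimate in $3$D. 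The paper even remarks that ``the structure of the $\kappa$ quantum term is essential to get this lemma in 3D.'' Your proposal never mentions this term; instead you regularize with a cold pressure $\delta\rho^\gamma$, which is precisely what the paper is trying to remove. You acknowledge that the $\delta\to0$ limit while ``preserving the Mellet--Vasseur log-moment'' is ``the most delicate point'' but offer no mechanism, and indeed there is none within your framework: without the quantum term you have no way to make sense of the MV inequality on the approximate solutions in the first place, so there is nothing to propagate as $\delta\to 0$. The paper sidesteps this entirely by never having a cold pressure, and by instead peeling off the regularization in the order $\varepsilon\to0$, then $m,K\to\infty$, $\kappa\to0$, and finally $n\to\infty$, $r_0,r_1\to0$, with the careful cancellations of Sections 5--6 controlling the $\kappa$-terms.

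\textbf{The temperature concentration problem is not addressed.} You treat the temperature limit as a compactness exercise from $\nabla\theta^{\alpha/2}\in L^2_{t,x}$ plus a time-derivative bound. But the weak formulation \eqref{1.14} involves $\mathcal{K}(\theta)$, and the only uniform bound available on $\mathcal{K}(\theta_n)$ is $L^1((0,T)\times\Omega)$, which admits concentration. The paper handles this by multiplying the thermal equation by a concave truncation $h(\theta)$ satisfying \eqref{2.20}, deriving a family of renormalized thermal energy inequalities \eqref{2.53}, obtaining $\theta_\varepsilon$ bounded in $L^{\alpha+1}$ via the auxiliary Neumann-problem test function \eqref{3.30}--\eqref{3.34}, and then defining the limiting temperature through $\theta:=\mathcal{K}^{-1}(\overline{\mathcal{K}(\theta)})$ so that the thermal energy inequality survives as an inequality rather than an equality. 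None of this is present or implied in your proposal; the claim that ``compactness of the temperature comes from the $\nabla\theta^{\alpha/2}$ bound together with a time-derivative estimate'' skips the genuine analytical issue. Your idea that $\rho\in L^\infty_tL^3_x$ and some $\theta$-integrability identify the pressure via H\"older is not where the difficulty lies: the pressure term is easy once you have strong convergence of $\rho$ and a.e.\ convergence of $\theta$; the hard part is passing to the limit in $\mathcal{K}(\theta_n)$ in \eqref{1.14}, and for that the renormalization technique is what the paper actually uses.
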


\begin{rema}
Compared with the constant viscosity  and viscosity depending   temperature case stated in \cite{Firesel}, \cite{Firesel2} gives global weak solutions to the nonlinear problem \eqref{1.1}-\eqref{1.3}.
Here the viscosity is depending viscosity.
\end{rema}

\begin{rema}
Compared with the  viscosity depending density case stated in \cite{Desjardin} gives global weak solutions to the nonlinear problem \eqref{1.1}-\eqref{1.3} with additional pressure.
Here the pressure is only ideal gas condition, i.e. $P=R\rho\theta$.
\end{rema}

There is a large amount work on the global existence of weak solutions for the compressible Navier-Stokes
equation, in the constant viscosity coefficients case, one of the main result of the nineties is due to P.L. Lions \cite{Lions}, who proved the global existence of weak solutions for the compressible Navier-Stokes system in the case of barotropic equations of state. Later,
this result has been extended to the somehow optimal case $\gamma>n/2$ in \cite{Novotny} using oscillation defect
measures on density sequences associated with suitable approximation solutions. For the full compressible Navier-Stokes
equation, i.e., including the temperature equation, Feireisl \cite{Firesel} firstly prove the global existence of so-called
variational solutions  for the full compressible Navier-Stokes and heat-conducting system. Later on, he also extended this result to the temperature depending viscosity case\cite{Firesel2}. Such an existence result is obtained for specific pressure laws, given by general pressure equation
\begin{equation*}
P(\rho,\theta)= P_{b}(\rho)+ \theta P_{\theta}(\rho).
\end{equation*}
 Unfortunately, the perfect gas equation of state is not covered by this result. Namely the dominant role of the first, barotropic pressure $P_{b}$ is one of the key argument to obtain such an existence result.

Recently Bresch-Desjardins \cite{Bresch} have made important progress in the case of viscosity
coefficients depending on the density $\rho$, under some structure constraint on the viscosity
coefficients, they discover a new entropy inequality(called BD entropy) which can yield global in time integrability properties
on density gradients. This new structure was first applied in \cite{Desjardins} in the framework of capillary fluid. Later on,
they founded that this BD entropy inequality also can applied in the compressible Navier-Stokes
equation without capillarity. By this new BD entropy inequality, they succeeded in obtaining global existence of weak
solutions in the barotropic fluids with some additional drag terms. However, there are some difficulties
without any additional drag term, as lack of estimates for the velocity. By obtaining
a new apriori estimate on smooth approximation solutions, Mellet-Vasseur \cite{Mellet} study the stability of
barotropic compressible Navier-Stokes equations. Unfortunately, they cannot construct smooth approximation solutions.
Li and Xin \cite{Li} recently have been constructed some suitable approximate system which has smooth solutions satisfying the
energy inequality, the BD entropy inequality, and the Mellet-Vasseur type estimate, therefore they completely solved an
open problem.

As for the density depending viscosities case, the existence of global weal solutions to the Navier-Stokes equations for viscous compressible and heat conducting fluids was firstly proved by D. Bresch and B. Desjardins \cite{Desjardin}. The equation of state is ideal polytropic gas type:
 \begin{equation*}
 P =R\rho \theta+ P_{c}(\rho),
 \end{equation*}
However, they still need additional cold pressure assumption $P_{c}$.  Therefore, Our aim in this work is to remove additional assumption on the equation of state $P_{c}$. In order to prove the global existence of variational weak solutions, we need to construct an adapted approximation scheme and have enough compactness to pass the limit. Suppose we can construct a sequence of approximate solutions $\{\rho_{n}\}_{n=1}^{\infty}, \{u_{n}\}_{n=1}^{\infty}, \{\theta_{n}\}_{n=1}^{\infty}$ we come accross two major stumbling blocks when we passing the limit: (1) the lack of the strong convergence for $\sqrt{\rho_{n}}u_{n}$ in $L^{2}$. (2) concentrations in
$\{\theta_{n}\}_{n=1}^{\infty}$, more specifically, the lack a priori bounds on $\mathcal{K}(\theta_{n})$.

The problem of strong convergence for $\sqrt{\rho_{n}}u_{n}$ in $L^{2}$ can be solved by establishing a Mellet-Vesseur inequality. The inequality  was discovered by Mellet-Vasseur in the baratropic case, providing a $L^{\infty}(0,T; L\log L(\Omega))$ estimate of  $\rho_{n}|u_{n}|^{2}$. However, it is difficult to construct a adapted approximate scheme verifying the B-D entropy inequality and the Mellet-Vesseur inequality. To deal with this issue, we follow the idea in A.F.Vasseur and C.Yu \cite{Vesseur}, \cite{Yu}.  Additional damping terms $r_{0}u, r_{1}\rho |u|^{2}u$ and quantum term $\kappa \rho \nabla (\frac{\Delta \sqrt{\rho}}{\sqrt{\rho}})$ were introduced in their paper.

The problem of temperature concentration lies in the fact that there are only poor a priori bounds on  $\mathcal{K}(\theta_{n})$, specifically,
\begin{equation*}
\mathcal{K}(\theta_{n})~~~are~~~bounded~~~in~~~L^{1}((0,T)\times \Omega),
 \end{equation*}
we adopted an technique from Feireisl where the limit in the sense of renormalized limit of $\mathcal{K}(\theta_{n})$. i.e, the t  hermal energy inequality  is stable with respect to the topology induced by the renormalzied limit.

This paper is organized as follows.
In section $2$, we construct approximate system by adding some term to the origin system and using the Faedo-Galerkin approximation, also we establish the uniform estimates which is independent of $N$ and pass the limit $N\rightarrow \infty$. In section $3$, we deduce the BD entropy energy estimates and pass the limit $\varepsilon \rightarrow 0$.  In section 4, follow the idea in \cite{Yu}, we will get the approximate Mellet-Vasseur inequality for the weka solution.
In section $5$, $6$ and $7$, we recover the original system by vanishing these parameter $m\rightarrow \infty, K\rightarrow \infty, \kappa\rightarrow0, n\rightarrow \infty, r_{0}\rightarrow 0, r_{1}\rightarrow 0$, therefore our main theorem is proved.

\section{Faedo-Galerkin approximation}

In this section we introduce a approximating scheme which involves a system of regularized equations and the Faedo-Galerkin method. More specifically, we follow the idea in [Feireisl]. In begin with, we fix $u_{N}$ in the space $C([0,T]; X_{N})$ and use it to find a unique smooth solution to \eqref{2.1} $\rho=\rho(u_{N})$, then we solve a regularized thermal equation to \eqref{2.3} $\theta=\theta(\rho,u_{N})$, in the following we find
a local solution to the momentum equation by Schauder fixed theorem. Finally, in according with the uniform estimates, we can extend the local solutions for the whole time interval.

We define a finite-dimensional space $X_{N}=span\{e_{1}, e_{2},...,e_{N}\}$, where $N \in \mathbb{N}$, each $e_{i}$ is an orthonormal basic of $L^{2}(\Omega)$ which is also an orthogonal basis of $H^{2}(\Omega)$. We notice that $u\in C^{0}([0,T]; X_{N})$ is given by
\begin{equation*}
u_{N}(t,x)= \sum_{i=1}^{N} \lambda_{i}(t) e_{i}(x), ~~~(t,x)\in [0,T]\times \Omega,
\end{equation*}
for some functions $\lambda_{i}(t)$, and because of all the norms are equivalence on $X_{N}$, hence, u can be bound in $C^{0}([0,T]; C^{k}(\Omega))$ for any $k\geq 0$, thus
\begin{equation*}
\| u_{N} \|_{C^{0}([0,T];C^{k}(\Omega))} \leq \| u_{N} \|_{C^{0}([0,T];L^{2}(\Omega))}.
\end{equation*}
\subsection{Continuity equation}

For any given $u_{N}\in C^{0}([0,T];X_{N})$, by the classical theory of parabolic equations, there exists a smooth solution $\rho $ to the following approximated system
\begin{equation} \label{2.1}
\rho_{t}+ {\rm div}(\rho u_{N})= \varepsilon \Delta \rho,~~~~in~~(0,T)\times \Omega,
\end{equation}
with the initial data
\begin{equation} \label{2.2}
\rho(0,x)=\rho_{0} \geq \nu >0~~~and ~~\rho_{0}(x) \in C^{\infty}(\Omega),
\end{equation}
where $\nu>0$ is a constant. The following lemma can be seen in \cite{Firesel}.

\begin{lemm}
Let $u_{N} \in C([0,T]; X_{N})$ for $N$ fixed and $\rho_{0}$ be as above. Then there exists the unique classical solution to \eqref{2.1}, i.e., $\rho\in V^{\rho}_{[0,T]}$, where
$$ V^{\rho}_{[0,T]}=\left\{
\begin{aligned}
&\rho \in C([0,T]; C^{2+\nu}(\Omega)),\\
&\partial_{t}\rho \in C([0,T]; C^{\nu}(\Omega)),
\end{aligned}
\right.
$$
Moreover, the mapping $u_{N}\mapsto \rho(u_{N})$ maps bounded sets in $C([0,T]; X_{N})$ into bounded sets in $V^{\rho}_{[0,T]}$ and is continuous with values in $C([0,T]; C^{2+\nu^{\prime}}(\Omega))$, $0< \nu^{\prime}<\nu<1$,
\begin{equation*}
\inf_{x\in \Omega} \rho_{0}(x) exp^{-\int_{0}^{T}\|{\rm div} u_{N} \|_{L^{\infty}(\Omega)} ds} \leq
\rho(t,x) \leq \sup_{x\in \Omega} \rho_{0}(x) exp^{-\int_{0}^{T}\|{\rm div} u_{N} \|_{L^{\infty}(\Omega)} ds}
\end{equation*}
Finally, for fixed $N\in \mathbb{N}$, the function $\rho$ is smooth in the space variable.
\end{lemm}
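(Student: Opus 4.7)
The plan is to view \eqref{2.1} as a linear uniformly parabolic Cauchy problem on the torus with $C^\infty$-in-space coefficients. Since $X_N$ is finite-dimensional, the given velocity $u_N\in C([0,T];X_N)$ and all its spatial derivatives are continuous in $t$ and smooth in $x$, with norms controlled by $\|u_N\|_{C([0,T];L^2)}$ via equivalence of norms on $X_N$. Rewriting \eqref{2.1} in non-divergence form,
\begin{equation*}
\rho_t-\varepsilon\Delta\rho+u_N\cdot\nabla\rho+({\rm div}\,u_N)\rho=0,\qquad \rho(0,\cdot)=\rho_0\in C^\infty(\Omega),
\end{equation*}
standard linear parabolic theory (classical Schauder estimates in the parabolic H\"older class $C^{2+\nu,1+\nu/2}$, as in Ladyzhenskaya-Solonnikov-Ural'tseva Ch.~IV) produces a unique classical solution belonging to $V^\rho_{[0,T]}$.

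The two-sided bound on $\rho$ I would obtain by a direct minimum/maximum argument on $\rho$ itself, without any substitution. Evaluating the equation at a point $x^*$ where $\rho(t,\cdot)$ attains its spatial minimum on $T^3$ gives $\nabla\rho(t,x^*)=0$ and $\Delta\rho(t,x^*)\ge 0$, so $m(t):=\min_x\rho(t,\cdot)$ satisfies $m'(t)\ge -\|{\rm div}\,u_N(t,\cdot)\|_{L^\infty}\,m(t)$ in the envelope sense; Gr\"onwall yields
\begin{equation*}
\rho(t,x)\ge (\inf\rho_0)\exp\!\Bigl(-\!\int_0^T\|{\rm div}\,u_N\|_{L^\infty}\,ds\Bigr).
\end{equation*}
The mirror argument at the spatial maximum produces the upper bound with the opposite sign in the exponent (the upper estimate as written in the lemma appears to carry a sign typo). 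In particular $\rho$ stays uniformly bounded away from zero, a fact that is essential for the Schauder fixed-point construction of $u_N$ in the next subsection.

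Continuity of the map $u_N\mapsto\rho(u_N)$ follows from linearity. If $u_N^{(k)}\to u_N$ in $C([0,T];X_N)$, then $\tilde\rho=\rho^{(k)}-\rho$ solves an inhomogeneous linear parabolic equation with zero initial data, source $-{\rm div}(\rho^{(k)}(u_N^{(k)}-u_N))$, and coefficient perturbations that tend to zero. Parabolic Schauder estimates give uniform bounds on $\rho^{(k)}$ in $C^{2+\nu,1+\nu/2}$; an Arzel\`a-Ascoli compactness step then yields convergence in $C([0,T];C^{2+\nu'}(\Omega))$ for every $0<\nu'<\nu$, which accounts for the slight regularity loss recorded in the statement. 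Finally, $C^\infty$ smoothness in the space variable is obtained by a bootstrap: differentiating the equation in $x$ produces a parabolic equation for $\partial_x^\alpha\rho$ with smooth coefficients (since $X_N\subset C^\infty$), and iterating upgrades the regularity to arbitrary order.

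The main technical point in a full write-up is the book-keeping of H\"older norms in the continuity statement: the natural estimate lives in $C^{2+\nu,1+\nu/2}$ and must be converted via compactness into continuity in the weaker $C^{2+\nu'}$ topology. Existence, uniqueness, the minimum/maximum principle, and the space-regularity bootstrap are all routine applications of classical linear parabolic theory on the torus with smooth coefficients.
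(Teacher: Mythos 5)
The paper does not prove this lemma at all; it only cites Feireisl's book. Your argument is the standard one that such a reference would supply: rewrite \eqref{2.1} in non-divergence form $\rho_t-\varepsilon\Delta\rho+u_N\cdot\nabla\rho+(\operatorname{div}u_N)\rho=0$, note that for fixed $N$ the coefficients lie in every parabolic H\"older class by norm equivalence on $X_N$, and invoke Schauder theory for existence, uniqueness, and $V^{\rho}_{[0,T]}$-membership; get the two-sided bound from the parabolic maximum principle plus Gr\"onwall (your ``evaluate at the spatial min/max'' version and the more common ``compare with $g(t)=(\inf\rho_0)\exp(-\int_0^t\|\operatorname{div}u_N\|_{L^\infty})$'' version are interchangeable); derive continuity of $u_N\mapsto\rho(u_N)$ by observing that the difference solves a linear parabolic problem with vanishing source $-\operatorname{div}\bigl(\rho\,(u_N^{(k)}-u_N)\bigr)$ and zero data, then pass to a slightly weaker H\"older class via Arzel\`a--Ascoli; and bootstrap spatial regularity by differentiating the equation. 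All of these steps are sound, and you correctly identify the sign typo in the stated upper bound, which should read $\exp\bigl(+\int_0^T\|\operatorname{div}u_N\|_{L^\infty}\,ds\bigr)$. In short: this is not a different route from the paper's, because the paper has no route; it is a correct reconstruction of the routine argument the paper outsources to \cite{Firesel}.
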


\subsection{Temperature equation}
Next, given $\rho, u_{N}$, the temperature will be looked for as a solution of the approximate thermal energy equation:
\begin{equation}\label{2.3}
\partial_{t} ((\varepsilon+ \rho) \theta) + {\rm div} (\rho \theta u) - \Delta \mathcal{K}(\theta) + \varepsilon \theta^{\alpha+1} = \mathbb{S} : \nabla u- \rho \theta {\rm div} u,
\end{equation}
with
\begin{equation}\label{2.4}
(\varepsilon+ \rho) \theta(0,x)= (\varepsilon+ \rho_{0}) \theta_{0},
\end{equation}
is fulfilled pointwisely on $(0,T)\times \Omega$. Note that we need to regularize the coefficient of \eqref{2.3} with respect to time. A standard approach yields the following result:
\begin{lemm}
Let $u_{N}\in C([0,T]; X_{N})$ be a given vector field and let $\rho(u)$ be the unique solution of \eqref{2.1}.
Then \eqref{2.3} with the initial condition defined as above admits a unique strong solution $\theta=\theta(u_{N})$ which belong to
$$ V^{\theta}_{[0,T]}=\left\{
\begin{aligned}
&\theta  \in L^{\infty}(0,T; W^{1,2}(\Omega)),~~~\theta,\theta^{-1} \in L^{\infty}((0,T)\times \Omega),\\
&\partial_{t}\theta \in L^{2}((0,T)\times \Omega),~~~~~~~~\Delta\theta \in L^{2}((0,T)\times \Omega),
\end{aligned}
\right\}
$$
Moreover, the mapping $u_{N}$ to $\theta(u_{N})$ maps bound sets in $C([0,T]; X_{N})$ into bound sets in $V^{\theta}_{[0,T]}$ and the mapping is continuous with values in $L^{2}(0,T; W^{1,2}(\Omega))$.
\end{lemm}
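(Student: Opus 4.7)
The plan is to treat \eqref{2.3} as a quasilinear parabolic equation for $\theta$ in which the coefficient $\varepsilon+\rho$ in front of $\partial_{t}\theta$ is uniformly bounded away from zero (by $\varepsilon>0$) and the diffusion $-\Delta\mathcal{K}(\theta)$ is uniformly elliptic on bounded ranges of $\theta$ because $\kappa\ge C_{1}>0$. The super-linear absorption term $-\varepsilon\theta^{\alpha+1}$ (with $\alpha\ge 2$) will supply an a priori upper bound and counterbalance the pressure-work source $-\rho\theta\,\mathrm{div}\,u_{N}$.

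First I would truncate the nonlinearities: replace $\mathcal{K}(\theta)$ by a $C^{1}$ truncation $\mathcal{K}_{M}$ and $\theta^{\alpha+1}$ by $T_{M}(\theta)^{\alpha+1}$, where $M$ is chosen larger than any eventual pointwise upper bound to be established in the next step; if needed, mollify $\rho$ and $u_{N}$ in time so that all coefficients are $C^{1}_{t}$ smooth, in accord with the remark preceding the lemma statement. For the truncated problem, a Schauder fixed-point argument applied to the map $\bar{\theta}\mapsto\theta$, where $\theta$ solves the linear parabolic problem obtained by freezing $\bar{\theta}$ in the nonlinear diffusion and absorption, yields at least one solution; alternatively a Galerkin scheme in an orthonormal basis of $W^{1,2}(\Omega)$ works. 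Classical linear parabolic theory then provides $\theta\in L^{\infty}(0,T;W^{1,2}(\Omega))\cap L^{2}(0,T;H^{2}(\Omega))$ together with $\partial_{t}\theta\in L^{2}((0,T)\times\Omega)$, by testing successively with $\theta$, $\mathcal{K}(\theta)$, and $\partial_{t}\theta$.

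Next I would establish pointwise two-sided bounds via a weak maximum principle. For the upper bound, testing against $(\theta-K)^{+}$ with $K$ large, the sink $-\varepsilon\theta^{\alpha+1}$ dominates the $L^{\infty}$ source terms arising from $\mathbb{S}{:}\nabla u_{N}$ and $-\rho\theta\,\mathrm{div}\,u_{N}$ (recalling that all norms are equivalent on $X_{N}$, so $u_{N}$ is pointwise smooth and bounded), giving $\theta\le\overline{\Theta}(T,N,\varepsilon)$. For the lower bound I would use that $\mathbb{S}{:}\nabla u_{N}\ge 0$ (because $\mu\ge 0$ and $2\mu+3\lambda\ge 0$) and that at a spatial minimum $\Delta\mathcal{K}(\theta)\ge 0$, reducing the inequality to an ODE comparison of $\min_{x}\theta(t,\cdot)$ against $y'=-C(N,\varepsilon)(y+y^{\alpha+1})$ with $y(0)=\underline{\theta}>0$, yielding $\theta\ge\underline{\Theta}(T,N,\varepsilon)>0$. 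These bounds are independent of $M$, so choosing $M>\overline{\Theta}$ removes the truncation and returns a strong solution of \eqref{2.3} with both $\theta$ and $\theta^{-1}$ in $L^{\infty}((0,T)\times\Omega)$.

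For uniqueness, I would subtract the equations for two solutions $\theta_{1},\theta_{2}$ with the same data and test against $\mathcal{K}(\theta_{1})-\mathcal{K}(\theta_{2})$; the two-sided pointwise bounds make $\mathcal{K}$ bi-Lipschitz on the range of each $\theta_{i}$, so the remaining terms are Lipschitz in $\theta$ and a Grönwall inequality forces $\theta_{1}\equiv\theta_{2}$. The continuity of the map $u_{N}\mapsto\theta(u_{N})$ into $L^{2}(0,T;W^{1,2}(\Omega))$ is obtained by applying the same difference estimate to $\theta(u_{N}^{n})-\theta(u_{N})$, combined with the continuous dependence $u_{N}\mapsto\rho(u_{N})$ already provided by Lemma~2.1. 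The principal obstacle will be the interaction between the nonlinear diffusion $-\Delta\mathcal{K}(\theta)$ and the absorption $-\varepsilon\theta^{\alpha+1}$ in the maximum-principle step: they must be balanced carefully so that the strictly positive lower bound survives the sink and the pressure-work term. Once the two-sided pointwise bounds are secured, the equation is uniformly parabolic with smooth bounded coefficients and all remaining estimates reduce to standard linear parabolic theory.
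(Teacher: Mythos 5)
The paper gives no proof of this lemma; it simply notes that the coefficients must be regularized in time and that ``a standard approach yields the following result,'' in effect deferring to Feireisl's treatment of the analogous approximate internal-energy equation. Your sketch---truncation, a Schauder (or Galerkin) argument for the resulting uniformly parabolic problem (uniformly parabolic because $\varepsilon>0$ and $\kappa\ge C_{1}$), a comparison argument exploiting the absorption $\varepsilon\theta^{\alpha+1}$ for the upper bound and $\mathbb{S}:\nabla u_{N}\ge 0$ plus an ODE comparison for the strictly positive lower bound, removal of the truncation once $M$-independent two-sided bounds are secured, and a Gr\"{o}nwall estimate using that $\mathcal{K}$ is bi-Lipschitz on the range of $\theta$ for uniqueness and continuity---is exactly that standard route, so the proposal is correct and consistent with the approach the paper invokes.
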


\subsection{Momentum equation}

The Faedo-Galerkin approximation for the weak formulation of the momentum balance is given by
\begin{equation}\label{2.5}
\begin{aligned}
&\int_{\Omega} \rho u_{N} (T) \psi dx - \int_{\Omega} m_{0} \psi dx+ \varepsilon \int_{0}^{T} \int_{\Omega} \Delta u_{N} \cdot \Delta \psi dx dt\\
&- \int_{0}^{T} \int_{\Omega} (\rho u_{N} \otimes u_{N}) : \nabla \psi dx dt+ \int_{0}^{T} \int_{\Omega} 2\rho \mathbb{D}u_{N}: \nabla \psi dx dt\\
&- \int_{0}^{T} \int_{\Omega} P \nabla \psi dx dt + \varepsilon \int_{0}^{T} \int_{\Omega} \rho^{-10} \nabla \psi dx dt+ \varepsilon \int_{0}^{T} \int_{\Omega} \nabla \rho \cdot \nabla u_{N} \psi dx dt \\
&= -r_{0} \int_{0}^{T} \int_{\Omega} u_{N} \psi dx dt- r_{1} \int_{0}^{T} \int_{\Omega} \rho |u_{N}|^{2} u_{N} \psi dx dt- 2\kappa \int_{0}^{T} \int_{\Omega} \Delta \sqrt{\rho}  \nabla \sqrt{\rho} \psi dx dt\\
& - \kappa \int_{0}^{T} \int_{\Omega} \Delta \sqrt{\rho} \sqrt{\rho} {\rm div} \psi dx dt + \varepsilon \int_{0}^{T} \int_{\Omega} \rho \nabla \Delta^{9} \rho \psi dxdt
\end{aligned}
\end{equation}
for any test function $\psi \in X_{ N}$. The extra term $\varepsilon \Delta^{2} u_{N}$ is not only necessary to extend the local solution obtained by the fixed point theorem to a global one at the Gerlakin level but also to make sure $\partial_{t} (\frac{\nabla \rho}{\rho}) \in L^{2}((0,T)\times \Omega)$ so that it can be as a test function when we compute the B-D entropy at next level, the extra term $\varepsilon \nabla \rho^{-10}$ and $\varepsilon \rho \nabla \Delta^{9} \rho$ are necessary to keep the density bounded, and bounded away from zero for all time. This enables us to take $\frac{\nabla \rho}{\rho}$ as a test function to derive the BD entropy.

Following the same arguments in [6,7,11], we can solve \eqref{2.5} by the fixed point argument. To that purpose, we introduce an operator on the set $\{ \rho \in L^{1}(\Omega), \rho \geq \underline{\rho}>0\}$, where $\underline{\rho}=\xi_{0}$:
\begin{equation*}
\mathcal{M}[\rho(t), \cdot] : X_{N} \rightarrow X_{N}^{*}, ~~~<\mathcal{M}[\rho]u, w>= \int_{\Omega}  \rho u \cdot w dx~~for~u,w\in X_{N},
\end{equation*}
We can show that $\Xi[\rho]$ is invertible,
\begin{equation*}
\|\mathcal{M}^{-1}(\rho)\|_{L(X_{N}^{*},X_{N})} \leq \underline{\rho}^{-1},
\end{equation*}
where $L(X_{N}^{*},X_{N})$ is the set of all bounded linear mappings from $X^{*}_{N}$ to $X_{N}$. It is Lipschitz continuous in the following sense,
\begin{equation*}
\|\mathcal{M}^{-1}(\rho_{1})-\mathcal{M}^{-1}(\rho_{2})\|_{L(X_{N}^{*},X_{N})} \leq C(N,\underline{\rho}) \| \rho_{1}-\rho_{2}\|_{L^{1}(\Omega)},
\end{equation*}
for any $\rho_{1}$ and $\rho_{2}$ from the following set
\begin{equation*}
N_{\nu}=\{\rho\in L^{1}(\Omega)| \inf_{x\in \Omega} \rho \geq \nu >0 \},
\end{equation*}

We also define a mapping
\begin{equation*}
\mathcal{T}: C([0,\tau];X_{N})\rightarrow C([0,\tau];X_{N}), \mathcal{T}(v_{N})=u_{N},
\end{equation*}
them, can rewrite \eqref{2.5} as the following problem:
\begin{equation*}
u_{N}(t)= \mathcal{M}^{-1}[\rho(v_{N})](m^{0}+\int_{0}^{T} P_{X_{N}} \mathcal{N}(v_{N})ds),
\end{equation*}
where
\begin{equation*}
\begin{aligned}
<\mathcal{N}(v_{N}),\phi>&=  \int_{\Omega} (\rho v_{N} \otimes v_{N}) : \nabla \phi dx - \int_{\Omega} 2\rho \mathbb{D}v_{N}: \nabla \phi dx+ \int_{\Omega} P \nabla \phi dx  \\
&+\varepsilon \int_{\Omega} \Delta v_{N} \cdot \Delta \phi dx + \varepsilon \int_{\Omega} \rho^{-10} \nabla \phi dx + \varepsilon \int_{\Omega} \nabla \rho \cdot \nabla v_{N} \phi dx  \\
&+\varepsilon \int_{\Omega} \rho \nabla \Delta^{9} \rho \phi dx -r_{0} \int_{0}^{T} \int_{\Omega} v_{N} \psi dx - r_{1} \int_{\Omega} \rho |v_{N}|^{2} v_{N} \phi dx \\
&- 2\kappa  \int_{\Omega} \Delta \sqrt{\rho}  \nabla \sqrt{\rho} \phi dx - \kappa  \int_{\Omega} \Delta \sqrt{\rho} \sqrt{\rho} {\rm div} \phi dx,
\end{aligned}
\end{equation*}

Next, we consider a ball $\mathcal{B}$ in the space $C([0,T];X_{N})$:
\begin{equation*}
\mathcal{B}_{R,\tau}= \{v\in C([0,T];X_{N}): \|v\|_{C([0,T];X_{N})}\leq R \},
\end{equation*}

It is easier to  show that the operator $\mathcal{T}$ is continuous and maps $\mathcal{B}_{R,\tau}$ into itself, provided $\tau$ is sufficiently small.  Moreover, thanks to lemma 2.1 and 2.2, $\mathcal{T}$ is a continuous mapping and its image consists of Lipschitz functions, thus it is compact in $\mathcal{B}_{R,\tau}$. It allows us to apply the Schauder theorem to infer that there exists at least one fixed point u solving \eqref{2.5} on $[0,\tau]$.

\subsection{Uniform estimates and global-in-time solvability}
In order to extend this solution for the whole time interval [0,T], we need  uniform estimates  of the solution with $N$. Taking $\psi =_{N}$ in \eqref{2.5} and using the approximate continuity equation, we obtain the kinetic energy balance
\begin{equation*}
\begin{aligned}
&\frac{d}{dt} \int_{\Omega} (\frac{1}{2} \rho_{N} |u_{N}|^{2} + \frac{\eta}{10} \rho_{N}^{-10}+ \frac{\kappa}{2}|\nabla \sqrt{\rho_{N}}|^{2} + \frac{\delta}{2}
|\nabla \Delta^{4} \rho_{N}|^{2}) + \varepsilon \int_{\Omega} |\Delta u_{N}|^{2} dx+ \int_{\Omega} \rho_{N} |\mathbb{D} u_{N}|^{2}\\
& + \varepsilon^{2} \int_{\Omega} |\Delta^{5} \rho_{N}|^{2}dx+ \varepsilon^{2} \int_{\Omega} |\nabla \rho^{-5}_{N}|^{2}dx + r_{0} \int_{\Omega} |u_{N}|^{2} dx + r_{1} \int_{\Omega} \rho_{N} |u_{N}|^{4}dx \\
&+ \kappa \varepsilon  \int_{\Omega} \rho_{N} |\nabla^{2} \log \rho_{N}|^{2}dx
= \int_{\Omega} P(\rho_{N},\theta_{N}) {\rm div} u_{N} dx,
\end{aligned}
\end{equation*}
Adding, to this, equality \eqref{2.3} integrated with respect to space and integrating the resulting sum with respect to time we obtain the total energy balance
\begin{equation}\label{2.6}
\begin{aligned}
&\frac{d}{dt} \int_{\Omega} (\frac{1}{2} \rho_{N} |u_{N}|^{2} + \frac{\varepsilon}{10} \rho^{-10}_{N}+ \frac{\kappa}{2}|\nabla \sqrt{\rho_{N}}|^{2} + \frac{\varepsilon}{2}
|\nabla \Delta^{4} \rho_{N}|^{2}+ (\varepsilon+ \rho_{N})\theta_{N})dx + \varepsilon \int_{\Omega} |\Delta u_{N}|^{2} dx\\
& + \varepsilon^{2} \int_{\Omega} |\Delta^{5} \rho|^{2}dx+ \varepsilon^{2} \int_{\Omega} |\nabla \rho^{-5}_{N}|^{2}dx + r_{0} \int_{\Omega} |u_{N}|^{2} dx + r_{1} \int_{\Omega} \rho_{N} |u_{N}|^{4}dx \\
& +   \kappa \varepsilon  \int_{\Omega} \rho_{N} |\nabla^{2} \log \rho_{N}|^{2}dx +  \varepsilon \int_{\Omega} \theta^{\alpha+1}dx
= 0,
\end{aligned}
\end{equation}

Moreover, one can integrate energy equality \eqref{2.6} yields
\begin{equation}\label{2.7}
\int_{0}^{T^{*}} \| \Delta u_{N} \|^{2}_{L^{2}} dt <\infty.
\end{equation}
Due to the equivalence of norms on the finite dimensional of $X_{N}$
, we deduce the uniform bound for $u$ in $C([0,\tau]; X_{N})$. Thus, we can extend local time $\tau$ to global time $T$, i.e. there exists a solution $(\rho, u, \theta)$ to \eqref{2.1}, \eqref{2.3}, \eqref{2.5} for any $T>0$.

\subsection{Estimates independent of N}
Our goal now is to identify a limit $N\rightarrow \infty$ of the approximate solutions $\rho_{N}, u_{N},\theta_{N}$ as a solution of the problem \eqref{2.1}, \eqref{2.3}, \eqref{2.5}. In order to achieve this, additional estimates are needed. In the following compactness analysis, we will always need a lemma proved by J\"{u}ngel \cite{Jungel}.
\begin{prop}
\begin{equation}\label{2.8}
\int_{\Omega} \rho |\nabla^{2} \log \rho|^{2} dx \geq \frac{1}{7} \int_{\Omega} |\nabla^{2} \sqrt{\rho}|^{2} dx,
\end{equation}
and
\begin{equation}\label{2.9}
\int_{\Omega} \rho |\nabla^{2} \log \rho|^{2} dx \geq \frac{1}{8} \int_{\Omega} |\nabla \rho^{\frac{1}{4}}|^{4} dx,
\end{equation}
\end{prop}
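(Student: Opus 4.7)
The plan is to translate both inequalities into polynomial integral identities in $f=\log\rho$, establish one key integration-by-parts identity on the torus, and then read off the constants by Cauchy-Schwarz.

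First, using $\partial_i\partial_j\rho=\rho(\partial_i\partial_j f+\partial_if\,\partial_jf)$, $\partial_i\partial_j\sqrt\rho=\tfrac{\sqrt\rho}{2}(\partial_i\partial_j f+\tfrac12\partial_if\,\partial_jf)$ and $\nabla\rho^{1/4}=\tfrac14\rho^{1/4}\nabla f$, I would introduce the three canonical quantities
\[
A:=\int_\Omega\rho|\nabla^2 f|^2,\qquad B:=\int_\Omega\rho\,(\nabla^2 f){:}(\nabla f\otimes\nabla f),\qquad C:=\int_\Omega\rho|\nabla f|^4,
\]
and expand to obtain $\int_\Omega\rho|\nabla^2\log\rho|^2=A$, $\int_\Omega|\nabla^2\sqrt\rho|^2=\tfrac14(A+B+\tfrac14 C)$, and $\int_\Omega|\nabla\rho^{1/4}|^4=\tfrac{1}{256}C$. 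In this language the first target becomes $108A\ge 4B+C$ and the second becomes $A\ge C/2048$.

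The main step is the integration-by-parts identity
\[
A+3B+C\;=\;\int_\Omega\rho(\Delta f)^2\,dx,
\]
which I would establish on $\Omega=T^3$ (all boundary terms vanish) as follows: (i) a direct IBP starting from $\int\rho\,\partial_i\partial_j f\cdot\partial_j(\partial_i f)$, together with $\partial_j\rho=\rho\,\partial_jf$, rewrites $A$ as $-B-\int\rho\nabla f\cdot\nabla\Delta f$; (ii) one further IBP on the remaining term introduces $\int\rho|\nabla f|^2\Delta f$ and $\int\rho(\Delta f)^2$; (iii) the side-identity $\int\rho|\nabla f|^2\Delta f=-C-2B$, obtained by one IBP of $\Delta f=\partial_j^2 f$ against $\rho|\nabla f|^2$, closes everything up.

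The final step combines the identity with the pointwise bound $(\Delta f)^2\le 3|\nabla^2 f|^2$ valid in dimension three, producing the one-sided estimate $B\le (2A-C)/3$. Substituting into the first target,
\[
108A-4B-C\;\ge\;108A-\tfrac{4}{3}(2A-C)-C\;=\;\tfrac{316}{3}A+\tfrac{1}{3}C\;\ge\;0.
\]
For the second target, the same upper bound on $B$ together with the Cauchy-Schwarz inequality $B^2\le AC$ (applied to $\nabla^2 f$ weighted by $\rho^{1/2}$ and $\nabla f\otimes\nabla f$ weighted by $\rho^{1/2}$) excludes $C>2048 A$: they would force $(C-2A)^2/9\le AC$, i.e., $C^2-13AC+4A^2\le 0$, hence $C\le\tfrac{13+\sqrt{153}}{2}A<13A$, contradicting $C>2048 A$. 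Thus $A\ge C/13\ge C/2048$.

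The only step requiring care is the bookkeeping of the two IBPs that produce the clean identity $A+3B+C=\int\rho(\Delta f)^2$; once this identity is in hand, both constants $1/7$ and $1/8$ are far from sharp and fall out without any further optimization, so no delicate algebraic manipulation is needed.
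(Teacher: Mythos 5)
The paper does not actually prove Proposition~2.3; it only states it and cites J\"ungel \cite{Jungel}, so there is no in-paper argument to compare against. Your blind proof is correct and self-contained: setting $f=\log\rho$, the algebraic reductions $\int_\Omega\rho|\nabla^2\log\rho|^2=A$, $\int_\Omega|\nabla^2\sqrt\rho|^2=\tfrac14(A+B+\tfrac14C)$ and $\int_\Omega|\nabla\rho^{1/4}|^4=\tfrac1{256}C$ are exact; the integration-by-parts identity $A+3B+C=\int_\Omega\rho(\Delta f)^2$ checks out on $T^3$ (one verifies it, e.g., by expanding $\int_\Omega\partial_j\bigl(\rho\,\partial_if\,\partial_i\partial_jf\bigr)\,dx=0$ and $\int_\Omega\partial_j\bigl(\rho|\nabla f|^2\partial_jf\bigr)\,dx=0$); the pointwise estimate $(\Delta f)^2\le 3|\nabla^2f|^2$ in $d=3$ and the weighted Cauchy--Schwarz $B^2\le AC$ are both correct. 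Substituting $B\le(2A-C)/3$ into $108A-4B-C$ gives a nonnegative quantity, which is \eqref{2.8}, and combining $B\le(2A-C)/3$ with $B^2\le AC$ forces $C\le\tfrac{13+\sqrt{153}}{2}A<13A$, which is stronger than the $C\le 2048A$ required for \eqref{2.9}. This is the same $\log\rho$-substitution plus integration-by-parts strategy that underlies the cited source (and the entropy-dissipation literature for the quantum term more generally), so your route is not genuinely different from where the result comes from; the only thing worth noting is that your derivation actually yields strictly better constants than the $\tfrac17$ and $\tfrac18$ quoted in the proposition, confirming that those constants are far from sharp.
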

By energy equality \eqref{2.6}, we have
\begin{equation}\label{2.10}
\kappa \varepsilon  \int_{\Omega} \rho_{N} |\nabla^{2} \log \rho_{N}|^{2}dx < \infty,
\end{equation}
By Prop 2.3, we have the following uniform estimates:
\begin{equation}\label{2.11}
(\kappa \varepsilon)^{\frac{1}{2}}\|\sqrt{\rho_{N}}\|_{L^{2}(0,T;H^{2}(\Omega))}+
(\kappa \varepsilon)^{\frac{1}{4}}\|\nabla \rho_{N}^{\frac{1}{4}}\|_{L^{4}(0,T;L^{4}(\Omega))} \leq C ,
\end{equation}
where the constant $C>0$ is independent of $N$.

To conclude this part, we have the following lemma on the approximate solutions $(\rho_{N},u_{N},\theta_{N})$.
\begin{prop}
Let $(\rho_{N},u_{N},\theta_{N})$ be the solution of \eqref{2.1}, \eqref{2.3}, \eqref{2.5} on $(0,T)\times \Omega$ constructed above, then we have the following energy inequality
\begin{equation}\label{2.12}
\begin{aligned}
&\sup_{t\in (0,T)} \int_{\Omega} E(\rho_{N},u_{N},\theta_{N}) + \varepsilon \int_{\Omega} |\Delta u_{N}|^{2} dx+ \varepsilon^{2} \int_{\Omega} |\Delta^{5} \rho_{N}|^{2}dx+ \varepsilon^{2} \int_{\Omega} |\nabla \rho_{N}^{-5}|^{2}dx + r_{0} \int_{\Omega} |u_{N}|^{2} dx \\
&+ r_{1} \int_{\Omega} \rho_{N} |u_{N}|^{4}dx  +\varepsilon \int_{\Omega} \theta_{N}^{\alpha+1}dx +   \kappa \varepsilon  \int_{\Omega} \rho_{N} |\nabla^{2} \log \rho_{N}|^{2}dx \leq E_{0}(\rho_{N},u_{N},\theta_{N}),
\end{aligned}
\end{equation}
where
\begin{equation}\label{2.13}
E(\rho_{N},u_{N},\theta_{N})= \int_{\Omega} (\frac{1}{2} \rho_{N} |u_{N}|^{2} + \frac{\varepsilon}{10} \rho_{N}^{-10}+ \frac{\kappa}{2}|\nabla \sqrt{\rho_{N}}|^{2} + \frac{\varepsilon}{2}
|\nabla \Delta^{4} \rho_{N}|^{2}+ (\varepsilon+ \rho_{N})\theta_{N})dx,
\end{equation}
Moreover, we have the following uniform estimates
\begin{equation}\label{2.14}
(\kappa \varepsilon)^{\frac{1}{2}}\|\sqrt{\rho_{N}}\|_{L^{2}(0,T£»H^{2}(\Omega))}+
(\kappa \varepsilon)^{\frac{1}{4}}\|\nabla \rho_{N}^{\frac{1}{4}}\|_{L^{4}(0,T£»L^{4}(\Omega))} \leq C ,
\end{equation}
where the constant $C>0$ is independent of $N$.

In particular, we have the following estimates,
\begin{equation}\label{2.15}
\sqrt{\rho_{N}}u_{N} \in L^{\infty}(0,T; L^{2}(\Omega)), \sqrt{\varepsilon} \Delta u_{N}\in L^{2}((0,T)\times \Omega),
\end{equation}
\begin{equation}\label{2.16}
\varepsilon \Delta^{5} \rho_{N} \in L^{2}((0,T)\times \Omega), \sqrt{\varepsilon} \rho_{N} \in L^{\infty}(0,T;H^{9}(\Omega)),
\sqrt{\kappa} \sqrt{\rho_{N}} \in L^{\infty}(0,T; H^{1}(\Omega)),
\end{equation}
\begin{equation}\label{2.17}
\varepsilon^{\frac{1}{10}}\rho^{-1}_{N} \in L^{\infty}(0,T;L^{10}(\Omega)), \varepsilon \nabla \rho^{-5}_{N} \in L^{2}((0,T)\times \Omega),
\end{equation}
\begin{equation}\label{2.18}
u_{N} \in L^{2}((0,T)\times \Omega), \rho^{\frac{1}{4}}_{N} u_{N} \in L^{4}((0,T)\times \Omega),
\end{equation}
\begin{equation}\label{2.19}
\rho_{N} \theta_{N} \in L^{\infty}(0,T; L^{1} (\Omega)), \theta^{\alpha+1}_{N}  \in  L^{\infty}(0,T; L^{1} (\Omega)),
\end{equation}
\end{prop}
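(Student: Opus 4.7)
The plan is to promote the energy identity (2.6), already derived by testing (2.5) with $u_N$ and adding the space-integrated thermal equation (2.3), into the integrated inequality (2.12) and then to read off every line of (2.14)--(2.19) from it. Since (2.6) is displayed just above the statement, the first step is purely integration in time: integrating (2.6) over $(0,\tau)$ for a.e.\ $\tau \in (0,T)$ and using that all dissipation terms on the left-hand side are non-negative gives (2.12) with $E_0(\rho_N,u_N,\theta_N)$ equal to the integral of the energy density at $t=0$, which is finite thanks to the hypotheses (1.16)--(1.18) of Theorem \ref{th1.1} together with the smoothing initial data chosen in (2.2).

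Once (2.12) is available, the particular estimates (2.15)--(2.19) come by isolating each non-negative contribution to the energy and each dissipation term in turn. The $L^\infty_t L^2_x$ bound on $\sqrt{\rho_N}u_N$ comes from $\tfrac{1}{2}\rho_N|u_N|^2$; the $L^2_{t,x}$ bound on $\sqrt{\varepsilon}\Delta u_N$ from $\varepsilon\int|\Delta u_N|^2$; the bounds in (2.16) from $\varepsilon^2|\Delta^5\rho_N|^2$, $\varepsilon|\nabla\Delta^4 \rho_N|^2$ and $\kappa|\nabla\sqrt{\rho_N}|^2$ respectively; (2.17) from $\varepsilon\rho_N^{-10}$ and $\varepsilon^2|\nabla\rho_N^{-5}|^2$; (2.18) from the drag terms $r_0 |u_N|^2$ and $r_1 \rho_N|u_N|^4$; and (2.19) from the thermal reservoir $(\varepsilon+\rho_N)\theta_N$ together with the dissipation $\varepsilon\theta_N^{\alpha+1}$.

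The remaining estimate (2.14) is obtained by invoking Proposition 2.3. From (2.12) we control
\[
\kappa\varepsilon \int_0^T \int_\Omega \rho_N|\nabla^2 \log \rho_N|^2 \, dx\, dt \le E_0,
\]
so the J\"ungel inequalities (2.8) and (2.9) directly yield bounds on $(\kappa\varepsilon)^{1/2} \|\nabla^2 \sqrt{\rho_N}\|_{L^2_t L^2_x}$ and $(\kappa\varepsilon)^{1/4}\|\nabla \rho_N^{1/4}\|_{L^4_t L^4_x}$ that are independent of $N$. The full $L^2_t H^2_x$ norm of $\sqrt{\rho_N}$ is then recovered by adding the contribution from $\kappa|\nabla\sqrt{\rho_N}|^2$ in (2.12) and the $L^\infty_t L^2_x$ bound on $\sqrt{\rho_N}$ that follows from mass conservation $\int\rho_N\,dx = \int\rho_0\,dx$.

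I expect the main technical obstacle to lie upstream, in verifying the algebraic identities that produce (2.6) itself, i.e.\ that the non-standard regularizing terms of (2.5), when tested by $u_N$ and combined with (2.1), generate the stated non-negative dissipation terms. The key points to check are: first, that the quantum piece $-2\kappa\Delta\sqrt{\rho_N}\nabla\sqrt{\rho_N} - \kappa\Delta\sqrt{\rho_N}\sqrt{\rho_N}\,\mathrm{div}$ reproduces both $\tfrac{\kappa}{2}\tfrac{d}{dt}\int|\nabla\sqrt{\rho_N}|^2$ and the Bohm dissipation $\kappa\varepsilon \int \rho_N|\nabla^2\log \rho_N|^2$; second, that the singular pressure term $\varepsilon\nabla\rho_N^{-10}$ gives $\tfrac{\varepsilon}{10}\tfrac{d}{dt}\int\rho_N^{-10}$ plus a positive $\varepsilon^2|\nabla\rho_N^{-5}|^2$ dissipation; and third, that $\varepsilon\rho_N \nabla\Delta^9 \rho_N$ produces $\tfrac{\varepsilon}{2}\tfrac{d}{dt}\int|\nabla\Delta^4\rho_N|^2$ plus the $\varepsilon^2|\Delta^5\rho_N|^2$ dissipation. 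Granting these identities, the rest of the argument is essentially bookkeeping from (2.12).
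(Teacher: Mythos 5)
Your proposal follows the paper's route exactly: (2.12) is obtained by integrating (2.6) in time, the particular estimates (2.15)--(2.19) are read off term by term from the non-negative energy and dissipation contributions, and (2.14) comes from feeding the $\kappa\varepsilon$-dissipation into the J\"ungel inequalities of Proposition 2.3, supplemented by the $\kappa|\nabla\sqrt{\rho_N}|^2$ contribution and mass conservation (which survives the $\varepsilon\Delta\rho$ regularization since the Laplacian has zero mean on the torus). The only minor caveat worth flagging is that the dissipation term $\varepsilon\int\theta_N^{\alpha+1}$ in (2.6) actually gives $\varepsilon\theta_N^{\alpha+1}\in L^1((0,T)\times\Omega)$ rather than the $L^\infty(0,T;L^1(\Omega))$ claimed in the paper's (2.19); your derivation is consistent with the correct bound, and the paper's statement there appears to be a slip.
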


At this stage of approximation, We multiply \eqref{2.3} by $h(\theta_{N})$, where $h$ enjoys the properties such that
\begin{equation}\label{2.20}
\begin{aligned}
&h \in C^{2}[0,\infty), ~~h(0)=1,~~h~~non-increasing ~~on~[0,\infty),~\lim_{z\rightarrow \infty} h(z)=0,\\
& h^{\prime \prime} \geq 2(h^{\prime}(z))^{2}~~~for~all~z\geq 0.
\end{aligned}
\end{equation}
 Accordingly, we obtain
\begin{equation}\label{2.21}
\begin{aligned}
&\partial_{t} ((\varepsilon+ \rho_{N}) Q_{h}(\theta_{N})) + {\rm div} (\rho_{N} Q_{h}(\theta_{N}) u_{N})- \Delta \mathcal{K}_{h}(\theta_{N}) + \varepsilon \theta_{N}^{\alpha+1} h(\theta_{N}) \\
& = h(\theta_{N}) \mathbb{S}:\nabla u_{N} - \kappa(\theta_{N}) h^{\prime}(\theta_{N}) |\nabla \theta_{N}|^{2} - h(\theta_{N}) \rho_{N} \theta_{N} {\rm div} u_{N}\\
&+ \varepsilon \Delta \rho_{N}(Q_{h}(\theta_{N})- \theta_{N} h(\theta_{N})),
\end{aligned}
\end{equation}
where $Q_{h}, \mathcal{K}_{h}$ are determined by
\begin{equation}\label{2.22}
Q_{h}= \int_{0}^{\theta_{N}} h(z) dz,~~~~\mathcal{K}_{h}= \int_{0}^{\theta_{N}} \kappa(z) h(z) dz,
\end{equation}

Integrating \eqref{2.21} over $\Omega$ yields
\begin{equation}\label{2.23}
\begin{aligned}
&\frac{d}{dt} \int_{\Omega} (\varepsilon+ \rho_{N}) Q_{h}(\theta_{N})dx + \varepsilon \int_{\Omega} \theta_{N}^{\alpha+1} h(\theta_{N})dx = \int_{\Omega} h(\theta_{N}) \mathbb{S}:\nabla u_{N} - \kappa(\theta_{N}) h^{\prime}(\theta_{N}) |\nabla \theta_{N}|^{2} dx\\
&+ \int_{\Omega} \varepsilon (\nabla \rho_{N} \cdot \nabla \theta_{N}) \theta_{N} h^{\prime}(\theta_{N}) - \theta_{N} h(\theta_{N}) \rho_{N} \theta_{N} {\rm div} u_{N} dx.
\end{aligned}
\end{equation}

In particular, the choice $h(\theta)= (1+\theta)^{-1}$ leads to relations
\begin{equation*}
-\int_{\Omega} \kappa(\theta_{N}) h^{\prime}(\theta_{N}) |\nabla \theta_{N}|^{2} dx \geq C \int_{\Omega} |\nabla \theta_{N}^{\alpha/2}|^{2} dx ,
\end{equation*}
while
\begin{equation*}
\varepsilon |\int_{\Omega} (\nabla \rho_{N} \cdot \nabla \theta_{N}) \theta_{N} h^{\prime}(\theta_{N})|
\leq \varepsilon \| \nabla \rho_{N} \|_{L^{2}(\Omega)} \| \nabla \theta_{N}^{2}\|_{L^{2}(\Omega)},
\end{equation*}
and
\begin{equation*}
\varepsilon |\int_{\Omega} \theta_{N} h(\theta_{N}) \rho_{N} \theta_{N} {\rm div} u_{N}| \leq C \| \rho_{N} \theta_{N}\|_{L^{2}(\Omega)}\| {\rm div} u_{N}\|_{L^{2}(\Omega)},
\end{equation*}
It follows from hypothesis \eqref{1.8} and the energy estimates \eqref{2.12} that the right-hand side of the last inequality is bounded in $L^{1}(0,T)$ by a constant that depends only on $\delta$.

Consequently, \eqref{2.23} integrated with respect to $t$ together with the energy estimates \eqref{2.12} yield a bound
\begin{equation}\label{2.24}
\|\nabla \log \theta_{N}\|_{L^{2}((0,T)\times \Omega)} \leq C(\varepsilon),~~~\|\nabla \theta_{N}^{\alpha/2} \|_{L^{2}((0,T)\times \Omega)} \leq C(\varepsilon),
\end{equation}
which is independent of $N$.

We note that  both the energy estimates and entropy estimates are independent of $N, \varepsilon$.

\subsection{The first level approximate solutions}
At this stage we are ready to pass to the limit for $N\rightarrow \infty $ in the sequence of approximate solutions $\{\rho_{N}, u_{N}, \theta_{N}\}$ in order to obtain a solution to the system \eqref{2.1}, \eqref{2.3}, \eqref{2.5}. As for uniform estimates of the sequence $\{ \theta_{N} \}$, we need a auxilliary result.

\begin{prop}
Let $\Lambda\geq 1$ a given constant. Let $\rho \geq 0$ be a measurable function satisfying
\begin{equation*}
0<M\leq \int_{\Omega} \rho dx,~~~\int_{\Omega} \rho^{\chi} dx \leq K,
\end{equation*}
for
\begin{equation*}
\chi > \frac{6}{5}.
\end{equation*}
Then there exists a constant $C=C(M,K)$ such that
\begin{equation*}
\|v\|_{L^{2}(\Omega)} \leq C(M,K) (\|\nabla v\|_{L^{2}(\Omega)}+
[\int_{\Omega} \rho |v|^{\frac{1}{\Lambda}}]^{\Lambda}),
\end{equation*}
for any $v\in W^{1,2}(\Omega)$.
\end{prop}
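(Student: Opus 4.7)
The plan is a compactness-contradiction argument in the spirit of the generalized Poincaré inequalities used by Feireisl. Suppose the stated bound fails with any uniform constant $C(M,K)$. Then there exist sequences $\rho_n\geq 0$ satisfying $\int_{\Omega}\rho_n\,dx\geq M$ and $\int_{\Omega}\rho_n^{\chi}\,dx\leq K$, together with $v_n\in W^{1,2}(\Omega)$ with $\|v_n\|_{L^{2}(\Omega)}=1$, such that
\begin{equation*}
\|\nabla v_n\|_{L^{2}(\Omega)}+\Bigl(\int_{\Omega}\rho_n|v_n|^{1/\Lambda}\,dx\Bigr)^{\!\Lambda}\longrightarrow 0.
\end{equation*}

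First I would extract limits by compactness. Since $\{v_n\}$ is bounded in $W^{1,2}(\Omega)$, the Rellich-Kondrachov theorem on $\Omega=T^{3}$ provides a subsequence, not relabeled, with $v_n\to v$ strongly in $L^{2}$ and almost everywhere, and weakly in $W^{1,2}$. Weak lower semicontinuity of the gradient together with $\|\nabla v_n\|_{L^{2}}\to 0$ forces $\nabla v=0$; connectedness of $T^{3}$ then gives $v\equiv c$ with $|c|=|\Omega|^{-1/2}>0$. The sequence $\{\rho_n\}$ is bounded in $L^{\chi}(\Omega)$, so a further subsequence yields $\rho_n\rightharpoonup\rho$ weakly in $L^{\chi}$, and weak convergence of non-negative functions preserves the lower mass bound: $\int_{\Omega}\rho\,dx\geq M$.

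Next I would identify the limit of the weighted integral, which is the core of the argument. Sobolev embedding $W^{1,2}(\Omega)\hookrightarrow L^{6}(\Omega)$ gives $\|v_n\|_{L^{6}}\leq C$; interpolating against the strong $L^{2}$ convergence shows $v_n\to c$ strongly in $L^{p}(\Omega)$ for every $p<6$. For $\Lambda\geq 1$ the map $t\mapsto|t|^{1/\Lambda}$ is subadditive, so $||v_n|^{1/\Lambda}-|c|^{1/\Lambda}|\leq|v_n-c|^{1/\Lambda}$. With $\chi'=\chi/(\chi-1)$ the Hölder conjugate of $\chi$, the hypothesis $\chi>6/5$ yields $\chi'<6$, hence $\chi'/\Lambda<6$ for every $\Lambda\geq 1$, and raising the pointwise bound to the power $\chi'$ and integrating gives
\begin{equation*}
\||v_n|^{1/\Lambda}-|c|^{1/\Lambda}\|_{L^{\chi'}(\Omega)}\leq\|v_n-c\|_{L^{\chi'/\Lambda}(\Omega)}^{1/\Lambda}\longrightarrow 0.
\end{equation*}
Pairing this strong convergence in $L^{\chi'}$ with the weak convergence $\rho_n\rightharpoonup\rho$ in $L^{\chi}$ produces
\begin{equation*}
\int_{\Omega}\rho_n|v_n|^{1/\Lambda}\,dx\longrightarrow|c|^{1/\Lambda}\int_{\Omega}\rho\,dx\geq M|\Omega|^{-1/(2\Lambda)}>0,
\end{equation*}
contradicting the assumed decay of the left-hand side.

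The only delicate step is the balancing of three exponents: the Sobolev exponent $6$, the weight integrability $\chi$, and the reshuffling induced by $1/\Lambda\in(0,1]$ via subadditivity. The sharp point of the argument is that the threshold $\chi>6/5$ is precisely what guarantees $\chi'/\Lambda<6$ \emph{uniformly in} $\Lambda\geq 1$, which is why the compactness/weak-strong pairing closes for every $\Lambda$ under the single hypothesis $\chi>6/5$.
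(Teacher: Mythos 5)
The paper states this proposition without proof (it is taken from Feireisl's framework), so there is no in-text argument to compare against. Your compactness-contradiction proof is the standard one for this generalized Poincar\'e inequality, and it is correct: the normalization $\|v_n\|_{L^2}=1$ is legitimate because the inequality is homogeneous of degree one in $v$ (the weighted term $[\int\rho|v|^{1/\Lambda}]^{\Lambda}$ also scales linearly), Rellich gives $v_n\to c$ with $|c|=|\Omega|^{-1/2}>0$, and the pairing of weak $L^{\chi}$ convergence of $\rho_n$ with strong $L^{\chi'}$ convergence of $|v_n|^{1/\Lambda}$ forces the weighted integral to a strictly positive limit, contradicting the assumed decay. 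You also correctly identify that $\chi>6/5$ is exactly the condition $\chi'<6$ that makes the argument close uniformly in $\Lambda\geq 1$.

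One small point worth tightening: when $\Lambda$ is large, the exponent $\chi'/\Lambda$ can fall below $1$, so ``$v_n\to c$ strongly in $L^p$ for $p<6$'' is not directly quotable. The conclusion $\int_{\Omega}|v_n-c|^{\chi'/\Lambda}\,dx\to 0$ still holds on the finite-measure torus, e.g.\ by Jensen's inequality $\int|f|^{s}\leq|\Omega|^{1-s}(\int|f|)^{s}$ for $0<s<1$ applied to $f=v_n-c$ (together with $L^1$ convergence), or simply by Vitali's theorem using convergence in measure and the uniform $L^6$ bound. Adding a sentence to this effect would make the step airtight without changing the structure of the proof.
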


Based on the  previous estimates, we have the following estimates uniform in $N$.
\begin{lemm}
The following estimates hold for any fixed positive constants $\varepsilon, r_{0}, r_{1}$ and $\kappa$:
\begin{equation}\label{2.25}
\| (\sqrt{\rho_{N}})_{t}\|_{L^{2}((0,T)\times \Omega)} + \| \sqrt{\rho_{N}}\|_{L^{2}(0,T; H^{2}(\Omega))} \leq C
\end{equation}
\begin{equation}\label{2.26}
\| (\rho_{N})_{t}\|_{L^{2}((0,T)\times \Omega)} + \|\rho_{N}\|_{L^{2}(0,T; H^{10}(\Omega))} \leq C
\end{equation}
\begin{equation}\label{2.27}
\| (\rho_{N} u_{N})_{t}\|_{L^{2}(0,T; H^{-9}(\Omega))}
 + \|\rho_{N} u_{N}\|_{L^{2}((0,T)\times \Omega)} \leq C
\end{equation}
\begin{equation}\label{2.28}
\nabla (\rho_{N} u_{N}) ~~is~~uniformly~~bounded~~in~~in~~ L^{4}(0,T; L^{\frac{6}{5}}(\Omega))+ L^{2}(0,T; L^{\frac{3}{2}}(\Omega)).
\end{equation}
\begin{equation}\label{2.29}
\| \rho_{N}^{-10}\|_{L^{\frac{5}{3}}((0,T)\times \Omega)}  \leq C
\end{equation}
\begin{equation}\label{2.30}
\| \log\theta_{N}\|_{L^{2}(0,T; W^{1,2}(\Omega))} + \| \theta_{N}^{\frac{\alpha}{2}}\|_{L^{2}(0,T; W^{1,2}(\Omega))} \leq C
\end{equation}
where $C$ is independent of $N$ and depends on $\varepsilon,r_{0},r_{1}, \kappa$.
\end{lemm}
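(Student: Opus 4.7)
All six bounds flow from Proposition 2.4, the entropy bound \eqref{2.24}, and the two evolution equations \eqref{2.1}, \eqref{2.5}; I proceed estimate by estimate, starting with the bookkeeping. Estimate \eqref{2.30} is identical to \eqref{2.24}; the $L^2_tH^2_x$-factor of \eqref{2.25} is contained in \eqref{2.11}; and $\sqrt{\varepsilon}\rho_N\in L^\infty_tH^9_x$ together with $\varepsilon\Delta^5\rho_N\in L^2_{t,x}$ (both from \eqref{2.16}) plus elliptic regularity on the torus yield $\rho_N\in L^2_tH^{10}_x$, which is the spatial part of \eqref{2.26}. The time derivatives in \eqref{2.25}--\eqref{2.26} are then read from the regularized continuity equation $\partial_t\rho_N=\varepsilon\Delta\rho_N-\mathrm{div}(\rho_Nu_N)$: the term $\varepsilon\Delta\rho_N$ is controlled by the $H^{10}$-bound, while $\mathrm{div}(\rho_Nu_N)\in L^2_{t,x}$ by the product analysis used below for \eqref{2.28}; for $(\sqrt{\rho_N})_t=\partial_t\rho_N/(2\sqrt{\rho_N})$ the singular factor $\rho_N^{-1/2}$ is absorbed using $\varepsilon^{1/10}\rho_N^{-1}\in L^\infty_tL^{10}_x$ from \eqref{2.17}.

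\emph{Estimate \eqref{2.29}.} This is a Gagliardo--Nirenberg interpolation. From \eqref{2.17} I have $\rho_N^{-5}\in L^\infty_tL^2_x$ and $\nabla\rho_N^{-5}\in L^2_{t,x}$, so $\rho_N^{-5}\in L^2_tH^1_x\hookrightarrow L^2_tL^6_x$ by the three-dimensional Sobolev embedding. Interpolating between $L^\infty_tL^2_x$ and $L^2_tL^6_x$ along the parabolic scaling line yields $\rho_N^{-5}\in L^{10/3}_{t,x}$, and squaring gives $\rho_N^{-10}\in L^{5/3}_{t,x}$.

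\emph{Estimates \eqref{2.27} and \eqref{2.28}.} For the $L^2_{t,x}$-bound on $\rho_Nu_N$ I factor $\rho_Nu_N=\sqrt{\rho_N}\cdot(\sqrt{\rho_N}u_N)$, using $\sqrt{\rho_N}\in L^\infty_{t,x}$ (from $\sqrt{\varepsilon}\rho_N\in L^\infty_tH^9_x\hookrightarrow L^\infty_{t,x}$ in \eqref{2.16}) and $\sqrt{\rho_N}u_N\in L^\infty_tL^2_x$ from \eqref{2.15}. For $(\rho_Nu_N)_t\in L^2_tH^{-9}_x$ I read $\partial_t(\rho_Nu_N)$ off \eqref{2.5} as a distribution, pair each of the forcing terms with a test function in $H^9_x$, and verify $L^2$-integrability in time; the choice of the weak space $H^{-9}_x$ is dictated by the dispersive regularization $\varepsilon\rho_N\nabla\Delta^9\rho_N$, whose pairing after integration by parts involves up to nine derivatives of the test function and is controlled jointly by $\sqrt{\varepsilon}\rho_N\in L^\infty_tH^9_x$ and $\varepsilon\Delta^5\rho_N\in L^2_{t,x}$. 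For \eqref{2.28} I use the product rule $\nabla(\rho_Nu_N)=\rho_N\nabla u_N+u_N\otimes\nabla\rho_N$. The first piece lands in $L^2_tL^{3/2}_x$ using $\rho_N\in L^\infty_{t,x}$ and $\nabla u_N\in L^2_{t,x}$, the latter from $\sqrt{\varepsilon}\Delta u_N\in L^2_{t,x}$ and $u_N\in L^2_{t,x}$ via Fourier analysis on the torus. The second piece, factored as $2(\rho_N^{1/4}u_N)(\rho_N^{1/4}\nabla\sqrt{\rho_N})$, lands in $L^4_tL^{6/5}_x$ via $\rho_N^{1/4}u_N\in L^4_{t,x}$ from \eqref{2.18}, $\rho_N^{1/4}\in L^\infty_tL^{12}_x$ (Sobolev from $\sqrt{\rho_N}\in L^\infty_tH^1_x\hookrightarrow L^\infty_tL^6_x$), and $\nabla\sqrt{\rho_N}\in L^\infty_tL^2_x$ from \eqref{2.12}.

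\emph{Main obstacle.} The delicate point is the sharp splitting of $u_N\otimes\nabla\rho_N$ in \eqref{2.28}: the exponents $4$ and $6/5$ are forced by distributing the weight $\sqrt{\rho_N}$ symmetrically between the Mellet--Vasseur factor $\rho_N^{1/4}u_N\in L^4_{t,x}$ and the BD-entropy factor $\rho_N^{1/4}\nabla\sqrt{\rho_N}\in L^\infty_tL^{12/7}_x$. This is the one place in the lemma where the BD structure and the Mellet--Vasseur estimate must be combined, and it must be done without appealing to $\varepsilon$-dependent pointwise bounds, so that precisely this splitting can survive the later passages to the limit. Everything else in the lemma is essentially bookkeeping once Proposition 2.4 and \eqref{2.24} are available.
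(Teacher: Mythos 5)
Your detailed argument for \eqref{2.25}--\eqref{2.29} is essentially sound, and in fact goes further than the paper, which simply cites Lemma~2.2 of \cite{Yu} for those five estimates; your interpolation for \eqref{2.29} (squaring $\rho_N^{-5}\in L^{10/3}_{t,x}$ obtained from $L^\infty_t L^2_x\cap L^2_t L^6_x$) and your splitting of $\nabla(\rho_N u_N)$ into a $\rho_N\nabla u_N$ piece and a $2(\rho_N^{1/4}u_N)\otimes(\rho_N^{1/4}\nabla\sqrt{\rho_N})$ piece matching the exponents $L^2_tL^{3/2}_x$ and $L^4_tL^{6/5}_x$ are exactly the right ones. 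Two small remarks there: for $(\sqrt{\rho_N})_t$ it is cleaner to use the $\varepsilon$-dependent positive lower bound on $\rho_N$ coming from the Sobolev inequality the paper records (which gives $\rho_N^{-1/2}\in L^\infty_{t,x}$) rather than $\varepsilon^{1/10}\rho_N^{-1}\in L^\infty_tL^{10}_x$, and the $L^2_{t,x}$ bound on $\mathrm{div}(\rho_N u_N)$ used for \eqref{2.26} relies on the $\varepsilon$-dependent $L^\infty_t H^9_x$ regularity of $\rho_N$, not on the $\varepsilon$-uniform product split of \eqref{2.28} as your cross-reference suggests.

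The genuine gap is \eqref{2.30}. You write that \eqref{2.30} is ``identical to'' \eqref{2.24}, but it is strictly stronger: \eqref{2.24} controls only the gradients $\|\nabla\log\theta_N\|_{L^2_{t,x}}$ and $\|\nabla\theta_N^{\alpha/2}\|_{L^2_{t,x}}$, whereas \eqref{2.30} asserts a bound in $L^2(0,T;W^{1,2}(\Omega))$, which also requires $\|\log\theta_N\|_{L^2_{t,x}}$ and $\|\theta_N^{\alpha/2}\|_{L^2_{t,x}}$. For $\theta_N^{\alpha/2}$ this follows at once from $\theta_N^{\alpha+1}\in L^\infty_tL^1_x$ in \eqref{2.19}, but for $\log\theta_N$ it does not: a gradient bound alone cannot control the $L^2$ norm of $\log\theta_N$ because there is no a priori lower bound on $\theta_N$ uniform in $N$, so the negative part of $\log\theta_N$ may a priori be large on vacuum-like regions. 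The paper closes this by applying the density-weighted Poincar\'e inequality of Proposition~2.5,
\begin{equation*}
\|v\|_{L^{2}(\Omega)} \leq C(M,K)\Bigl(\|\nabla v\|_{L^{2}(\Omega)}+\Bigl[\int_{\Omega}\rho\,|v|^{\frac{1}{\Lambda}}\,dx\Bigr]^{\Lambda}\Bigr),
\end{equation*}
with $v=\log\theta_N$, using \eqref{2.19} (together with conservation of mass and the $L^\chi$ bound on $\rho_N$) to control the weighted zero-order term. That weighted-Poincar\'e step is exactly what your proof omits, and it is the only part of the lemma where the paper gives an actual argument rather than a citation; you should supply it.
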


\begin{proof}
The proof of \eqref{2.25}-\eqref{2.29} is same as the Lemma 2.2 in \cite{Yu}.

The estimate \eqref{2.24} together with  \eqref{2.19} make it possible to apply Proposition 2.5 such that \eqref{2.30} hold.
\end{proof}

Applying the Aubin-Lions lemma and Lemma 2.6, we conclude
\begin{equation}\label{2.31}
\rho_{N} \rightarrow \rho~~strongly~in~L^{2}(0,T; H^{9}(\Omega)),~~weakly~in~L^{2}(0,T; H^{10}(\Omega)),
\end{equation}
\begin{equation}\label{2.32}
\sqrt{\rho_{N}} \rightarrow \sqrt{\rho}~~strongly~in~L^{2}(0,T; H^{1}(\Omega)),~~weakly~in~L^{2}(0,T; H^{2}(\Omega)),
\end{equation}
and
\begin{equation}\label{2.33}
\rho_{N} u_{N} \rightarrow \rho u~~strongly~in~L^{2}((0,T)\times \Omega),
\end{equation}
we notice that $u_{N} \in L^{2}((0,T)\times \Omega)$, thus
\begin{equation*}
u_{N} \rightarrow u~~weakly~in~L^{2}((0,T)\times \Omega),
\end{equation*}
Thus we can pass to the limits for the term $\rho_{N} u_{N} \otimes u_{N}$ as follows,
\begin{equation*}
\rho_{N} u_{N}\otimes u_{N} \rightarrow \rho u\otimes u
\end{equation*}
in  the distribution sense.

Here we state the following lemma on the strong convergence of $\rho_{N} |u_{N}|^{2} u_{N}$, which will be used later again. The proof is essentially the same as Lemma 2.3 in \cite{Yu}.
\begin{lemm}
When $N\rightarrow \infty$, we have
\begin{equation*}
\rho_{N} |u_{N}|^{2} u_{N} \rightarrow \rho |u|^{2} u,~~~strongly ~~in~~L^{1}(0,T;L^{1}(\Omega)).
\end{equation*}
\end{lemm}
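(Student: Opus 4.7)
The plan is to combine a uniform $L^p$ bound on $\{\rho_N|u_N|^2 u_N\}$ for some $p>1$ with pointwise a.e.\ convergence along a subsequence, and then conclude via Vitali's convergence theorem. This is the route taken in \cite{Yu}, and here it works because the first-level regularization keeps both $\rho_N$ and $\rho_N^{-1}$ under good control.

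\emph{Step 1: equi-integrability.} From (2.16), $\sqrt{\varepsilon}\,\rho_N$ is bounded in $L^\infty(0,T;H^9(\Omega))$, hence (via Sobolev embedding) $\rho_N$ is uniformly bounded in $L^\infty((0,T)\times\Omega)$ for fixed $\varepsilon$. Combining this with the damping bound $r_1\int_0^T\!\!\int_\Omega \rho_N|u_N|^4\,dx\,dt\leq C$ from (2.12), one gets
\[
\int_0^T\!\!\int_\Omega \bigl|\rho_N|u_N|^2 u_N\bigr|^{4/3}\,dx\,dt
=\int_0^T\!\!\int_\Omega \rho_N^{4/3}|u_N|^4\,dx\,dt
\leq \|\rho_N\|_{L^\infty}^{1/3}\int_0^T\!\!\int_\Omega \rho_N|u_N|^4\,dx\,dt\leq C,
\]
so $\{\rho_N|u_N|^2 u_N\}$ is uniformly bounded in $L^{4/3}((0,T)\times\Omega)$, which supplies the equi-integrability required by Vitali.

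\emph{Step 2: a.e.\ convergence.} The strong convergences $\rho_N\to\rho$ in $L^2(0,T;H^9(\Omega))$ from (2.31) and $\rho_N u_N\to \rho u$ in $L^2((0,T)\times\Omega)$ from (2.33) yield, along a common subsequence, $\rho_N\to\rho$ and $\rho_N u_N\to \rho u$ a.e.\ on $(0,T)\times\Omega$. Moreover, the bound $\|\rho_N^{-10}\|_{L^{5/3}}\leq C$ from (2.29) together with Fatou's lemma implies $\rho^{-10}\in L^{5/3}$, so $\rho>0$ a.e. On the full-measure set $\{\rho>0\}$ I may therefore write $u_N=(\rho_N u_N)/\rho_N\to u$ a.e., and combining the a.e.\ limits gives $\rho_N|u_N|^2 u_N\to \rho|u|^2 u$ a.e.

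\emph{Step 3: conclusion.} Vitali's convergence theorem, applied with the $L^{4/3}$ equi-integrability of Step 1 and the a.e.\ convergence of Step 2, yields strong convergence in $L^1(0,T;L^1(\Omega))$, as claimed.

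The main obstacle is the a.e.\ convergence of $u_N$ itself, since $u_N$ is only weakly convergent in $L^2$. It is precisely the $\varepsilon$-regularization terms $\varepsilon\nabla\rho^{-10}$ and $\varepsilon\rho\nabla\Delta^9\rho$ in (2.5) that rescue the argument: they produce the bound (2.29) (hence $\rho>0$ a.e.) and the $H^9$-regularity of $\rho_N$ (hence its $L^\infty$ bound). Once $\varepsilon\to 0$ later in the paper, this convenience disappears and a more delicate Mellet--Vasseur-type argument will be required, which is exactly why this lemma is stated only at the first approximation level.
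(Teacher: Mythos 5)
Your proof is correct and is essentially the argument the paper defers to (Lemma 2.3 of Vasseur--Yu \cite{Yu}): uniform $L^{4/3}$ equi-integrability from the damping term $r_1\int\rho_N|u_N|^4$ together with the $L^\infty$ bound on $\rho_N$ coming from the $\varepsilon$-regularization, a.e.\ convergence of $u_N$ obtained by dividing $\rho_N u_N$ by $\rho_N$ using the positive lower bound on the density (cf.\ also (2.34) in the paper, which gives $\rho\geq C(\varepsilon)>0$ directly), and then Vitali's theorem. Your Fatou argument for $\rho>0$ a.e.\ is a slightly weaker but perfectly adequate alternative to the paper's (2.34), and your closing remark about why the $\varepsilon$-terms make this step cheap at the first level is exactly the right observation.
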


Meanwhile, we have to mention the following Sobolev inequality
\begin{equation*}
\| \rho^{-1}\|_{L^{\infty}(\Omega)} \leq C (1+ \|\rho\|_{H^{k+2}(\Omega)})^{2} (1+\|\rho^{-1}\|_{L^{3}})^{3}
\end{equation*}
for $k\geq \frac{3}{2}$. Thus the estimates on density from \eqref{2.16}-\eqref{2.17} enable us to use the above inequality to have
\begin{equation}\label{2.34}
\| \rho\|_{L^{\infty}((0,T)\times \Omega)} \geq C(\delta,\eta)>0~~a.e.~~in~~(0,T)\times \Omega.
\end{equation}
\eqref{2.34} and \eqref{2.31} allow us to have $\rho^{-10}_{N}$ converges almost  everywhere to $\rho^{-10}$. Thanks to \eqref{2.29}, we deduce
\begin{equation}\label{2.35}
\rho^{-10}_{N} \rightarrow \rho^{-10}~~strongly~in~L^{1}((0,T)\times \Omega),
\end{equation}

In order to continue, we have to show pointwise convergence of the sequence $\{ \theta_{N} \}$. To this end, we use the fact that the time derivatives $\partial_{t} \theta_{N}$ satisfy the thermal energy inequality.

\begin{lemm}
Let $\{ v_{n}\}_{n=1}^{\infty}$ be a sequence of functions such that
\begin{equation*}
v_{n} ~are~bounded~in~L^{2}(0,T; L^{q}(\Omega)) \cap L^{\infty}(0,T; L^{1}(\Omega)) ,~with~q>\frac{2N}{N+2}.
\end{equation*}
Furthermore, assume that
\begin{equation*}
\partial_{t}v_{n} \geq g_{n}~~~in~\mathcal{D}^{\prime}((0,T)\times \Omega)
\end{equation*}
where
\begin{equation*}
g_{n}~~are~bounded~in~L^{1}(0,T; W^{-m,r}(\Omega))
\end{equation*}
for a certain $m\geq 1$, $r>1$.

Then $\{ v_{n}\}_{n=1}^{\infty}$ contains a subsequence such that
\begin{equation*}
v_{n} \rightarrow v~~in~~L^{2}(0,T; W^{-1,2}(\Omega)).
\end{equation*}
\end{lemm}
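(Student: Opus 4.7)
The plan is to reduce the one-sided derivative hypothesis to the classical Aubin--Lions framework by separating a monotone-in-time component from a component with small time derivative. Introduce the primitive $G_n(t,\cdot) := \int_0^t g_n(s,\cdot)\,ds$ and set $w_n := v_n - G_n$; then $\partial_t w_n = \partial_t v_n - g_n \geq 0$ in $\mathcal{D}'((0,T)\times\Omega)$, so $w_n(t,\cdot)$ is monotone non-decreasing in $t$ in the distributional sense. Because $g_n$ is bounded in $L^1(0,T;W^{-m,r}(\Omega))$, the primitive $G_n$ is bounded in $L^\infty(0,T;W^{-m,r}(\Omega))$, and $v_n$ is bounded in $L^\infty(0,T;L^1(\Omega))$; choosing $M$ and $p$ large enough so that $L^1(\Omega)$ and $W^{-m,r}(\Omega)$ both embed continuously into the negative Sobolev space $W^{-M,p}(\Omega)$, both $v_n$ and $w_n$ are bounded in $L^\infty(0,T;W^{-M,p}(\Omega))$.

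First, I would apply a Helly-type selection argument to the monotone family $\{w_n(t,\cdot)\}$: pairing with a countable dense set of test functions in the predual (positive) Sobolev space and a diagonal extraction yields a subsequence such that, for every $t \in [0,T]$, $w_n(t,\cdot) \rightharpoonup w(t,\cdot)$ in $W^{-M,p}(\Omega)$. Since $G_n(t,\cdot)$ is the primitive of an $L^1$-in-time family, its equicontinuity in $t$ (with values in $W^{-m,r}$) combined with the uniform bound allows a compactness extraction giving pointwise-in-$t$ weak convergence $G_n(t,\cdot) \rightharpoonup G(t,\cdot)$. Adding these two gives $v_n(t,\cdot) \rightharpoonup v(t,\cdot)$ in $W^{-M,p}(\Omega)$ for every $t$.

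Next, the threshold $q > 2N/(N+2)$ is precisely what makes the dual Sobolev embedding $L^q(\Omega) \hookrightarrow\hookrightarrow W^{-1,2}(\Omega)$ compact (it is the dual of the compact embedding $W^{1,2}(\Omega) \hookrightarrow\hookrightarrow L^{q'}(\Omega)$). Since $v_n(t,\cdot)$ is bounded in $L^q(\Omega)$ for a.e. $t$, the weak-in-$W^{-M,p}$ convergence identifies, via uniqueness of limits, the limit of any $W^{-1,2}$-strongly convergent subsequence; therefore $v_n(t,\cdot) \to v(t,\cdot)$ strongly in $W^{-1,2}(\Omega)$ for a.e. $t$.

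Finally, to upgrade pointwise-in-$t$ strong convergence in $W^{-1,2}(\Omega)$ to strong convergence in $L^2(0,T;W^{-1,2}(\Omega))$, I would invoke the dominated convergence theorem: the uniform bound $\|v_n(t,\cdot)\|_{L^q(\Omega)} \leq C$ gives $\|v_n(t,\cdot) - v(t,\cdot)\|_{W^{-1,2}(\Omega)}^2 \leq C'$ uniformly in $n$ and $t$, providing the required integrable majorant. The main technical obstacle I expect is in the first step: executing Helly's selection rigorously for a monotone family taking values in a negative Sobolev space requires interpreting the distributional monotonicity against a carefully chosen countable dense family of nonnegative test functions, followed by a diagonal procedure to recover pointwise-in-$t$ convergence in the full separable dual space while preserving enough structure to couple with the spatial compactness in the later steps.
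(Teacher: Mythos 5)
Your overall strategy---separating $v_n$ into a monotone part $w_n = v_n - G_n$ and the primitive $G_n$, running a Helly-type selection for the monotone part, and combining with the spatial compactness $L^q \hookrightarrow\hookrightarrow W^{-1,2}$---is the right one, and is essentially how this lemma (due to Feireisl) is proved; the paper cites it without proof. But your final two steps contain a genuine gap that the statement's hypotheses do not allow you to repair.

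The hypothesis is that $v_n$ is bounded in $L^2(0,T;L^q(\Omega))$, which only controls $\int_0^T \|v_n(t)\|_{L^q}^2\,dt$. It does \emph{not} give a bound on $\|v_n(t)\|_{L^q}$ that is uniform in both $n$ and a.e.\ $t$; the only uniform-in-$t$ bound you have is the $L^\infty(0,T;L^1)$ one. So the assertion ``$v_n(t,\cdot)$ is bounded in $L^q(\Omega)$ for a.e.\ $t$'' is false as used, and the integrable majorant ``$\|v_n(t)-v(t)\|^2_{W^{-1,2}} \leq C'$ uniformly in $n$ and $t$'' that you feed into dominated convergence does not exist. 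This second claim is doubly problematic: even the honest uniform bound $\|v_n(t)\|_{L^1}\leq C$ does not control $\|v_n(t)\|_{W^{-1,2}}$ in dimension three, because $W^{1,2}(\Omega)\not\hookrightarrow L^\infty(\Omega)$, hence $L^1(\Omega)\not\hookrightarrow W^{-1,2}(\Omega)$. So the step from pointwise-in-$t$ information to $L^2(0,T;W^{-1,2})$ convergence cannot be done by dominated convergence at the $W^{-1,2}$ level.

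The fix is to stay in a much weaker space at the pointwise stage and then interpolate. Choose $M,p$ with $W^{M,p'}(\Omega)\hookrightarrow\hookrightarrow C(\overline\Omega)$, so that $L^1(\Omega)\hookrightarrow\mathcal M(\Omega)\hookrightarrow\hookrightarrow W^{-M,p}(\Omega)$. Your Helly argument together with the equicontinuity of $G_n$ gives, for a.e.\ $t$, weak-$*$ convergence of the bounded-in-$\mathcal M(\Omega)$ family $v_n(t)$, which by the compactness just noted is \emph{strong} convergence in $W^{-M,p}(\Omega)$; here the $L^\infty(0,T;L^1)$ bound does supply a legitimate uniform majorant, so dominated convergence yields $v_n\to v$ strongly in $L^2(0,T;W^{-M,p}(\Omega))$. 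Now invoke Ehrling's lemma for the triple $L^q\hookrightarrow\hookrightarrow W^{-1,2}\hookrightarrow W^{-M,p}$: for every $\eta>0$ there is $C_\eta$ with $\|w\|_{W^{-1,2}}\leq\eta\|w\|_{L^q}+C_\eta\|w\|_{W^{-M,p}}$. Applying this to $w=v_n(t)-v(t)$, squaring and integrating in time, the first term is controlled by $\eta^2$ times the (uniform) $L^2(0,T;L^q)$ bound, and the second tends to $0$ by the previous step; letting $n\to\infty$ and then $\eta\to 0$ gives $v_n\to v$ in $L^2(0,T;W^{-1,2}(\Omega))$. This is where the $L^2(0,T;L^q)$ hypothesis is actually consumed, and it is consumed globally in time rather than pointwise.
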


Now we want to apply Lemma 2.7 to the sequence $(\varepsilon + \rho_{N} ) \theta_{N} $ appearing in the thermal equation \eqref{2.3}. Note that, in accordance with the estimate \eqref{2.31} for the temperature, we have
\begin{equation}\label{2.36}
\rho_{N} \log \theta_{N}~~bounded~in~L^{2}(0,T; L^{q}(\Omega)) \cap L^{\infty}(0,T; L^{1}(\Omega)) ,~with~q>\frac{2N}{N+2}.
\end{equation}
Thus we can use Lemma 2.7 together with \eqref{2.30} and thermal enery inequality \eqref{2.3} to obtain
\begin{equation*}
(\varepsilon+ \rho_{N}) \theta_{N} \rightarrow (\varepsilon +\rho) \theta (strongly)~~in~L^{2}(0,T;W^{-1,2}(\Omega)).
\end{equation*}
Consequently, in view of $\theta_{N} \in L^{2}(0,T;W^{1,2}(\Omega))$
\begin{equation*}
(\varepsilon+ \rho_{N}) |\theta_{N}|^{2} \rightarrow (\varepsilon +\rho) |\theta|^{2} ~~~~in~[\mathcal{D}^{\prime}((0,T)\times \Omega)]^{N}.
\end{equation*}

As the function $z\mapsto \varepsilon z^{2}+ \rho z^{2}$ is non-decreasing,  this relation allow us to conclude that strong convergence
\begin{equation}\label{2.37}
\theta_{N} \rightarrow \theta ~~~~~~strongly~in~L^{1}((0,T)\times \Omega),
\end{equation}

Now, a simple interpolation argument can be used to deduce  form \eqref{2.37}, \eqref{2.19}, \eqref{2.30}  that
\begin{equation}\label{2.38}
\theta_{N} \rightarrow \theta ~~~~~~strongly~in~L^{p}((0,T)\times \Omega),~~~ for~a~certain~p>\alpha,
\end{equation}
Thus we know that
\begin{equation}\label{2.39}
\theta ~is~strictly~positive~a.e.~on~(0,T)\times \Omega,~~\overline{\log \theta}= \log \theta,~~\overline{\theta^{3}}=\theta^{3},
\end{equation}

Here we state the following lemma on the convergence of $\rho_{N} |u_{N}|^{2} u_{N}$ which is proved in Lemma 2.3 (\cite{Yu}).
\begin{lemm}
When $N\rightarrow \infty$, we have
\begin{equation}\label{2.40}
\rho_{N} |u_{N}|^{2} u_{N} \rightarrow \rho |u|^{2} u~~strongly~in~L^{1}((0,T)\times \Omega),
\end{equation}
\end{lemm}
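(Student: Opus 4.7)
The plan is to apply Vitali's convergence theorem, which reduces the statement to two ingredients: (i) almost-everywhere convergence of $\rho_N|u_N|^2 u_N$ to $\rho|u|^2 u$ on $(0,T)\times\Omega$, and (ii) equi-integrability of the sequence $\{\rho_N |u_N|^2 u_N\}$ in $L^1((0,T)\times\Omega)$. Since $(0,T)\times\Omega$ has finite measure, these two conditions together yield the desired $L^1$ strong convergence.

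For (i), I would pass to a subsequence (not relabeled) so that $\rho_N \to \rho$ a.e.\ using the strong $L^2(0,T;H^9(\Omega))$ convergence \eqref{2.31}, and further so that $\rho_N u_N \to \rho u$ a.e.\ from the strong $L^2((0,T)\times\Omega)$ convergence \eqref{2.33}. The decisive ingredient is the strict positivity \eqref{2.34}, i.e.\ $\rho \geq C(\delta,\eta)>0$ a.e., which is enforced by the regularizing term $\varepsilon\rho^{-10}$ (together with $\varepsilon\rho\nabla\Delta^{9}\rho$) built into the Galerkin scheme. On the full-measure set where $\rho>0$, one then has $u_N = (\rho_N u_N)/\rho_N \to u$ a.e., and consequently $\rho_N|u_N|^2 u_N \to \rho|u|^2 u$ a.e.

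For (ii), I would use the damping contribution to the energy inequality \eqref{2.12}, namely $r_1 \int_0^T\!\!\int_\Omega \rho_N|u_N|^4\,dx\,dt \leq E_0$, giving a uniform $L^1$ bound on $\rho_N|u_N|^4$. For any measurable $A \subset (0,T)\times\Omega$ and any $R>0$, the splitting
\[
\int_A \rho_N |u_N|^3\,dx\,dt \;\leq\; \frac{1}{R}\int_0^T\!\!\int_\Omega \rho_N|u_N|^4\,dx\,dt \;+\; R^3 \int_A \rho_N\,dx\,dt
\]
handles the large-$|u_N|$ regime via the first term. For the second term, the bound on $\sqrt{\rho_N}$ in $L^\infty(0,T;H^1(\Omega))$ (obtained from the $\tfrac{\kappa}{2}|\nabla\sqrt{\rho_N}|^2$ contribution in \eqref{2.12} together with mass conservation) yields $\rho_N \in L^\infty(0,T;L^3(\Omega))$ by Sobolev embedding, so $\int_A \rho_N \leq C\,T^{1/3}|A|^{2/3}$. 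Choosing $R$ large first and then $|A|$ small makes the right-hand side arbitrarily small, uniformly in $N$; componentwise this is the equi-integrability of $\rho_N|u_N|^2 u_N$.

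The \emph{main obstacle} is step (i): the sequence $u_N$ converges only weakly in $L^2$, and the triple product $\rho_N|u_N|^2 u_N$ cannot be identified without pointwise information on $u_N$ itself. The entire argument rests on being able to \emph{divide} by $\rho_N$, which is legitimate only because of the strictly positive lower bound on the density furnished by the regularization $\varepsilon\rho^{-10}$; this is the price paid at the Galerkin stage so that $u_N$ inherits a.e.\ convergence from $\rho_N u_N$ and $\rho_N$. With (i) and (ii) in hand, Vitali's theorem concludes $\rho_N|u_N|^2 u_N \to \rho|u|^2 u$ in $L^1((0,T)\times\Omega)$.
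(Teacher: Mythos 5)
Your argument is correct and is precisely the one the paper delegates to Lemma~2.3 of Vasseur--Yu: a.e.\ convergence of $u_N$ obtained by dividing $\rho_N u_N$ by $\rho_N$ (legitimate because of the lower density bound~\eqref{2.34} enforced by the $\varepsilon\rho^{-10}$ regularization), combined with equi-integrability from the damping bound $r_1\int\rho_N|u_N|^4\leq E_0$ and the $L^\infty(0,T;L^3)$ control on $\rho_N$, then Vitali's theorem. No discrepancy with the paper's intended route.
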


By the above compactness, we are ready to pass to the limits as $N\rightarrow \infty$ in the approximation system. Thus we have shown that $(\rho,u)$ solves
\begin{equation}\label{2.41}
\partial_{t}\rho+{\rm div}(\rho u) =\varepsilon \Delta \rho,~~~pointwise~~~in~~~(0,T)\times \Omega.
\end{equation}
and for any test function $\psi$ such that the following integral hold:
\begin{equation}\label{2.42}
\begin{aligned}
&\int_{\Omega} \rho u (T) \psi dx - \int_{\Omega} m_{0} \psi dx+ \mu \int_{0}^{T} \int_{\Omega} \Delta u \cdot \Delta \psi dx dt\\
&- \int_{0}^{T} \int_{\Omega} (\rho u \otimes u) : \nabla \psi dx dt+ \int_{0}^{T} \int_{\Omega} 2\rho \mathbb{D}u: \nabla \psi dx dt\\
&- \int_{0}^{T} \int_{\Omega} R\rho \theta \nabla \psi dx dt + \eta \int_{0}^{T} \int_{\Omega} \rho^{-10} \nabla \psi dx dt+ \varepsilon \int_{0}^{T} \int_{\Omega} \nabla \rho \cdot \nabla u \psi dx dt \\
&= -r_{0} \int_{0}^{T} \int_{\Omega} u \psi dx dt- r_{1} \int_{0}^{T} \int_{\Omega} \rho |u|^{2} u \psi dx dt- 2\kappa \int_{0}^{T} \int_{\Omega} \Delta \sqrt{\rho}  \nabla \sqrt{\rho} \psi dx dt\\
& - \kappa \int_{0}^{T} \int_{\Omega} \Delta \sqrt{\rho} \sqrt{\rho} {\rm div} \psi dx dt + \delta \int_{0}^{T} \int_{\Omega} \rho \nabla \Delta^{9} \rho \psi dxdt
\end{aligned}
\end{equation}
Thanks to the weak lower semicontinuity of convex functions, we are able to pass to the limits in the energy inequality \eqref{2.12}; by the strong convergence of the density and temperature, we have the following energy inequality in the sense of distributions on $(0,T)$:
\begin{equation}\label{2.43}
\begin{aligned}
&\sup_{t\in (0,T)} \int_{\Omega} E(\rho,u,\theta) + \varepsilon \int_{\Omega} |\Delta u|^{2} dx+ \varepsilon^{2}\int_{\Omega} |\Delta^{5} \rho|^{2}dx+ \varepsilon^{2} \int_{\Omega} |\nabla \rho^{-5}|^{2}dx + r_{0} \int_{\Omega} |u|^{2} dx \\
&+ r_{1} \int_{\Omega} \rho |u|^{4}dx  +   \kappa \varepsilon  \int_{\Omega} \rho |\nabla^{2} \log \rho|^{2}dx
+ \varepsilon \int_{\Omega} \theta^{\alpha+1}dx \leq E_{0}(\rho,u,\theta),
\end{aligned}
\end{equation}

where
\begin{equation}\label{2.44}
E(\rho,u,\theta)= \int_{\Omega} (\frac{1}{2} \rho |u|^{2} + \frac{\eta}{10} \rho^{-10}+ \frac{\kappa}{2}|\nabla \sqrt{\rho}|^{2} + \frac{\delta}{2}
|\nabla \Delta^{4} \rho|^{2}+ (\varepsilon+\rho)\theta )dx,
\end{equation}

Finally, we will pass to the limit for $N\rightarrow \infty$ in \eqref{2.21} to obtain \eqref{2.47}. Note that it is enough to show that one can pass to the limit in all non-linear terms contained in (\eqref{2.47}. To this end, we have used weak lower-continuity of the dissipative estimate:
To begin with,  we can use \eqref{2.42} together with estimates \eqref{2.33}, \eqref{2.31}, \eqref{2.19}, \eqref{2.36} to deduce
\begin{equation}\label{2.45}
(\varepsilon + \rho_{N})  Q_{h} (\theta_{N}) \rightarrow
(\varepsilon + \rho) Q_{h}(\theta)~~~in~~ L^{1}((0,T)\times \Omega)
\end{equation}
and
\begin{equation}\label{2.46}
\rho_{N}  Q_{h} (\theta_{N}) u_{N} \rightarrow \rho  Q_{h} (\theta) u~~weakly~in~~ L^{r}((0,T)\times \Omega)
\end{equation}
and
\begin{equation}\label{2.47}
\rho_{N} \theta_{N} h (\theta_{N})  {\rm div} u_{N} \rightarrow \rho \theta h (\theta)  {\rm div} u~~weakly~in~~ L^{r}((0,T)\times \Omega)
\end{equation}
for a certain $r>1$.

Moreover, because of convexity of the function
$$ [\mathbb{M},\theta]\mapsto\left\{
\begin{aligned}
&h(\theta)(\frac{\mu}{2}\mathbb{M}:\mathbb{M}+ \lambda (tr[\mathbb{M}])^{2}),~~~if~\theta\geq 0, ~~\mathbb{M} \in R^{N^{2}},\\
&\infty,~~~~~~~~~~~~~~~~~~~if~\theta< 0,
\end{aligned}
\right\}
$$
we get
\begin{equation}\label{2.48}
\int_{0}^{T} \int_{\Omega} h(\theta) \mathbb{S}: \nabla u \psi dx dt \leq \lim \inf_{N\rightarrow \infty} \int_{0}^{T} \int_{\Omega}h(\theta_{N}) \mathbb{S}: \nabla u_{N}\psi dx dt,
\end{equation}
for any non-negative test function $\psi$. Similarly,
\begin{equation}\label{2.49}
-\int_{0}^{T} \int_{\Omega}\psi \kappa(\theta) h^{\prime}(\theta) |\nabla \theta|^{2} dx dt \leq \lim \inf_{N\rightarrow \infty} \int_{0}^{T} \int_{\Omega}\psi \kappa(\theta_{N}) h^{\prime}(\theta_{N})|\nabla \theta_{N}|^{2} dx dt,
\end{equation}

Now, because of strong convergence of $\nabla \rho_{N}$ established in \eqref{2.33}, we
\begin{equation}\label{2.50}
\begin{aligned}
&\int_{0}^{T} \int_{\Omega} \varepsilon \nabla(\psi(\log\theta_{N}-1)) \cdot \nabla \rho_{N} + \psi \rho_{N} {\rm div} u_{N} dx dt\\
& \rightarrow \int_{0}^{T} \int_{\Omega} \varepsilon \nabla(\psi(\log\theta-1)) \cdot \nabla \rho + \psi \rho {\rm div} u dx dt
\end{aligned}
\end{equation}
Finally, by virtue of \eqref{2.40}, \eqref{2.41}, \eqref{2.33}, \eqref{2.31}, \eqref{2.19}, \eqref{2.36}
\begin{equation}\label{2.51}
\mathcal{K}_{h}(\theta_{N}) \rightarrow \mathcal{K}_{h}(\theta)~~~in~~L^{1}((0,T)\times \Omega),
\end{equation}
\begin{equation}\label{2.52}
h(\theta_{N}) \theta^{\alpha+1}_{N}  \rightarrow h(\theta) \theta^{\alpha+1}~~~in~~L^{1}((0,T)\times \Omega),
\end{equation}

Making use of these estimates \eqref{2.45}-\eqref{2.52} we are able to let $N \rightarrow \infty$ in \eqref{2.21} in order to obtain a renormalized thermal energy inequality:
\begin{equation}\label{2.53}
\begin{aligned}
&\int_{0}^{T} \int_{\Omega}   ((\varepsilon+ \rho) Q_{h}(\theta))  \partial_{t} \psi+ (\rho Q_{h}(\theta) u) \cdot \nabla \psi+ \Delta \mathcal{K}_{h}(\theta) \Delta \psi - \varepsilon \theta^{\alpha+1} h(\theta) \psi dx dt \\
&\leq \int_{0}^{T} \int_{\Omega}  (\kappa(\theta_{N}) h^{\prime}(\theta_{N}) |\nabla \theta_{N}|^{2} -h(\theta_{N}) \mathbb{S}:\nabla u_{N}) dx dt +\int_{0}^{T} \int_{\Omega} h(\theta_{N}) \rho_{N} \theta_{N} {\rm div} u_{N} dx dt\\
&+ \varepsilon \int_{0}^{T} \int_{\Omega} \Delta \rho_{N}(Q_{h}(\theta_{N})- \theta_{N} h(\theta_{N})) dx dt - \int_{\Omega} (\varepsilon + \rho_{0,N}) Q_{h}(\theta_{0,N}) dx,
\end{aligned}
\end{equation}
to be satisfied for any test function
\begin{equation*}
\psi \in C^{\infty}([0,T]\times \Omega),~~~\psi \geq 0,~~~\psi(0)=1,~~~\psi(T)=0,
\end{equation*}

\section{BD entropy and vanishing limits $\varepsilon \rightarrow 0$}
The goal of this section is to pass into the limits for $\varepsilon \rightarrow 0$ in the family of approximate solutions $\{\rho_{\varepsilon}, u_{\varepsilon}, \theta_{\varepsilon}\}$ constructed in Section 2. In order to achieve this task, we will  deduce the BD entropy for the approximation system in Section 2.  By \eqref{2.3888} and \eqref{2.46}, we have
\begin{equation}\label{3.1}
\rho_{\varepsilon} \geq C(\varepsilon)>0,~~and ~~\rho_{\varepsilon} \in L^{2}(0,T; H^{10}(\Omega))\cap L^{\infty}(0,T; H^{9}(\Omega)).
\end{equation}

\subsection{BD entropy}
Thanks to \eqref{3.1}, we can use $\psi= \nabla(\log \rho_{\varepsilon})$ to test the momentum equation to derive the BD entropy. Thus we have the following lemma.

\begin{lemm}
\begin{equation}\label{3.2}
\begin{aligned}
&\frac{d}{dt} \int_{\Omega} (\frac{1}{2} \rho |u_{\varepsilon}+ \frac{\nabla \rho_{\varepsilon}}{\rho_{\varepsilon}}|^{2} + \frac{\varepsilon}{10} \rho_{\varepsilon}^{-10}+ \frac{\kappa}{2}|\nabla \sqrt{\rho_{\varepsilon}}|^{2} + \frac{\delta}{2}
|\nabla \Delta^{4} \rho_{\varepsilon}|^{2})dx
+\eta \int_{\Omega} |\nabla \rho_{\varepsilon}^{-5}|^{2}dx\\
& + \kappa   \int_{\Omega} \rho_{\varepsilon} |\nabla^{2} \log \rho_{\varepsilon}|^{2}dx+ 2 \varepsilon \int_{\Omega} |\Delta^{5} \rho_{\varepsilon}|^{2}dx+
\frac{1}{2} \int_{\Omega} \rho_{\varepsilon} |\nabla u- \nabla^{T} u_{\varepsilon}|^{2}dx+ \varepsilon
\int_{\Omega} \frac{|\Delta \rho|^{2}}{\rho_{\varepsilon}} dx \\
&+ \int_{\Omega} \frac{|\nabla \rho_{\varepsilon}|^{2}}{\rho_{\varepsilon}} \theta_{\varepsilon}= \varepsilon \int_{\Omega}  \nabla \rho_{\varepsilon} \cdot \nabla u_{\varepsilon} \cdot \nabla \log \rho dx+ \varepsilon \int_{\Omega} \Delta \rho_{\varepsilon} \frac{|\nabla \log \rho_{\varepsilon}|^{2}}{\rho_{\varepsilon}} dx- \varepsilon \int_{\Omega} {\rm div}(\rho_{\varepsilon} u_{\varepsilon}) \frac{1}{\rho_{\varepsilon}} \Delta \rho_{\varepsilon} dx\\
& -\varepsilon\int_{\Omega} \Delta u_{\varepsilon} \cdot \nabla \Delta \log \rho_{\varepsilon} dx
- r_{0} \int_{\Omega} \frac{u_{\varepsilon}\cdot \nabla\rho_{\varepsilon}}{\rho_{\varepsilon}} dx
- r_{1} \int_{\Omega}|u_{\varepsilon}|^{2} u_{\varepsilon} \nabla \rho_{\varepsilon} dx -\int_{\Omega} R \rho_{\varepsilon} \theta_{\varepsilon} {\rm div} u_{\varepsilon}  dx \\
&- \int_{\Omega} \nabla  \theta_{\varepsilon} \nabla \rho_{\varepsilon} dx =R_{1}+ R_{2}+ R_{3}+R_{4}+ R_{5}+ R_{6} +R_{7}+ R_{8},
\end{aligned}
\end{equation}
\end{lemm}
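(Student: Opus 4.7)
The plan is to derive the identity \eqref{3.2} by testing the momentum equation \eqref{2.42} against $\psi = \nabla\log\rho_\varepsilon$ and then combining the resulting identity with the standard kinetic-energy balance obtained by testing \eqref{2.42} with $\psi = u_\varepsilon$. By \eqref{3.1}, $\rho_\varepsilon$ is pointwise bounded away from zero and enjoys $H^{10}$ spatial regularity, so $\nabla\log\rho_\varepsilon \in L^2(0,T;H^9(\Omega))$ and is an admissible test function. The structural reason this works is the BD compatibility condition $\lambda(\rho) = 2(\rho\mu'(\rho)-\mu(\rho))$, which for our choice $\mu(\rho)=\rho$, $\lambda(\rho)=0$ ensures that the cross term $\int \rho u\cdot\nabla\log\rho$ plus the kinetic energy assemble into the perfect square $\frac{1}{2}\rho|u+\nabla\log\rho|^{2}$.

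The core algebraic step is as follows. Using the regularized continuity equation $\partial_t\rho_\varepsilon + \mathrm{div}(\rho_\varepsilon u_\varepsilon) = \varepsilon\Delta\rho_\varepsilon$ to compute $\partial_t(\nabla\log\rho_\varepsilon)$, the time-derivative piece from the momentum test produces $\tfrac{d}{dt}\int\rho_\varepsilon u_\varepsilon\cdot\nabla\log\rho_\varepsilon$, which together with $\tfrac{1}{2}\tfrac{d}{dt}\int |\nabla\rho_\varepsilon|^2/\rho_\varepsilon$ (arising from the convective contribution $\int \rho_\varepsilon u_\varepsilon\otimes u_\varepsilon : \nabla^2\log\rho_\varepsilon$) completes the BD kinetic square once added to the standard kinetic-energy identity. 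The viscous stress $\int 2\rho_\varepsilon D(u_\varepsilon):\nabla^2\log\rho_\varepsilon$, combined with the capillary test contributions $2\kappa\int \Delta\sqrt{\rho_\varepsilon}\,\nabla\sqrt{\rho_\varepsilon}\cdot\nabla\log\rho_\varepsilon + \kappa\int \Delta\sqrt{\rho_\varepsilon}\sqrt{\rho_\varepsilon}\,\mathrm{div}(\nabla\log\rho_\varepsilon)$, yields after integration by parts the quantum dissipation $\kappa\int\rho_\varepsilon|\nabla^2\log\rho_\varepsilon|^2$ together with $\tfrac{d}{dt}\tfrac{\kappa}{2}\int|\nabla\sqrt{\rho_\varepsilon}|^2$, and the antisymmetric viscous production $\tfrac{1}{2}\int\rho_\varepsilon|\nabla u_\varepsilon - \nabla^T u_\varepsilon|^2$ characteristic of BD when $\mu=\rho$. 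The ideal-gas pressure contribution $-\int P\,\mathrm{div}(\nabla\log\rho_\varepsilon) = \int\nabla(R\rho_\varepsilon\theta_\varepsilon)\cdot\nabla\log\rho_\varepsilon$ splits into the dissipative piece $\int \theta_\varepsilon|\nabla\rho_\varepsilon|^2/\rho_\varepsilon$ retained on the LHS and the signed piece $\int\nabla\theta_\varepsilon\cdot\nabla\rho_\varepsilon$ moved to the RHS as $R_8$, while the kinetic-energy identity contributes $-\int R\rho_\varepsilon\theta_\varepsilon\,\mathrm{div}\,u_\varepsilon$ as $R_7$.

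The remaining artificial terms are absorbed as follows: the cold-pressure-type term $\varepsilon\nabla\rho_\varepsilon^{-10}$ tested against $\nabla\log\rho_\varepsilon$ yields the monotone quantity $\eta\int|\nabla\rho_\varepsilon^{-5}|^2$ on the LHS; the hyperviscous regularization $\varepsilon\rho_\varepsilon\nabla\Delta^9\rho_\varepsilon$ produces both $\tfrac{d}{dt}\tfrac{\delta}{2}\int|\nabla\Delta^4\rho_\varepsilon|^2$ and $2\varepsilon\int|\Delta^5\rho_\varepsilon|^2$ after repeated integration by parts; and the drag tests of $r_0 u_\varepsilon$ and $r_1\rho_\varepsilon|u_\varepsilon|^2 u_\varepsilon$ against $\nabla\log\rho_\varepsilon$ produce $R_5$ and $R_6$ directly. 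The main obstacle is the careful bookkeeping of the four $\varepsilon$-error terms $R_1,\ldots,R_4$ generated by the regularizations $\varepsilon\Delta\rho_\varepsilon$ (in the continuity equation) and $\varepsilon\Delta^2 u_\varepsilon$ (the hyperviscosity). Specifically, when expanding $\partial_t\nabla\log\rho_\varepsilon$ via the identity $\Delta\log\rho_\varepsilon = \Delta\rho_\varepsilon/\rho_\varepsilon - |\nabla\log\rho_\varepsilon|^2$, the parasitic $\varepsilon$-diffusion splits into the positive dissipation $\varepsilon\int|\Delta\rho_\varepsilon|^2/\rho_\varepsilon$ (retained on the LHS) together with the error pieces $R_2$ and $R_3$; the stray $\varepsilon\nabla\rho_\varepsilon\cdot\nabla u_\varepsilon$ convection from \eqref{2.42} gives $R_1$; and integrating the hyperviscous test $\varepsilon\int \Delta u_\varepsilon\cdot\Delta(\nabla\log\rho_\varepsilon)$ by parts produces $R_4 = -\varepsilon\int\Delta u_\varepsilon\cdot\nabla\Delta\log\rho_\varepsilon$. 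The delicate sign tracking in this last step is the only real subtlety; once the regularization terms are assembled correctly, the identity \eqref{3.2} follows directly.
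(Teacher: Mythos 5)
Your proposal is correct and takes essentially the same route as the paper: test the Faedo-Galerkin momentum formulation \eqref{2.42} with $\psi=\nabla\log\rho_\varepsilon$ (admissible by \eqref{3.1}), combine with the kinetic energy balance tested against $u_\varepsilon$ so that the cross terms complete the square $\tfrac12\rho_\varepsilon|u_\varepsilon+\nabla\log\rho_\varepsilon|^2$, and track the $\varepsilon$-regularization errors $R_1,\dots,R_4$ coming from $\varepsilon\Delta\rho_\varepsilon$, $\varepsilon\nabla\rho_\varepsilon\cdot\nabla u_\varepsilon$ and $\varepsilon\Delta^2 u_\varepsilon$. The paper's proof is the single line preceding the lemma ("use $\psi=\nabla\log\rho_\varepsilon$ to test the momentum equation"), so your more detailed bookkeeping — including the split of the pressure test into the dissipative $\int\theta_\varepsilon|\nabla\rho_\varepsilon|^2/\rho_\varepsilon$ on the left and $R_8=-\int\nabla\theta_\varepsilon\cdot\nabla\rho_\varepsilon$ on the right, and identifying the quantum dissipation $\kappa\int\rho_\varepsilon|\nabla^2\log\rho_\varepsilon|^2$ and the antisymmetric viscous production as the BD-structural outputs — is a faithful expansion of the same computation; the only small imprecision is attributing $\tfrac{d}{dt}\tfrac{\kappa}{2}\int|\nabla\sqrt{\rho_\varepsilon}|^2$ to the capillary test rather than noting it is carried over unchanged from the kinetic energy functional, which is harmless.
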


We follow the same arguments in [16] to control terms $R_{i}$ for $i=1,2,3,4,5,6$, and they approach to zero as $\varepsilon\rightarrow 0$ or $\mu\rightarrow 0$ or $r_{0}\rightarrow 0$ or $r_{1}\rightarrow 0$. We estimate $R_{7}$ as follows:
\begin{equation} \label{3.3}
\begin{aligned}
&|R_{7}| \leq \varepsilon \int_{\Omega} \rho_{\varepsilon} |{\rm div} u_{\varepsilon}|^{2}dx + C(\varepsilon) \int_{\Omega} \int_{\Omega} \rho_{\varepsilon} \theta_{\varepsilon}^{2} dx\\
& \leq \varepsilon \int_{\Omega} \rho_{\varepsilon} |{\rm div} u_{\varepsilon}|^{2}dx + C(\varepsilon) \|\theta_{\varepsilon}\|_{L^{3}}^{2} \| \nabla \sqrt{\rho_{\varepsilon}}\|_{L^{2}}^{2},
\end{aligned}
\end{equation}
and for $R_{8}$, we have
\begin{equation} \label{3.4}
\begin{aligned}
&|R_{8}| \leq C\int_{\Omega} \frac{\rho_{\varepsilon} \theta_{\varepsilon}^{2}}{\kappa} |\nabla \sqrt {\rho_{\varepsilon}}|^{2}dx + C \int_{\Omega} \frac{\kappa |\nabla \theta_{\varepsilon}|^{2}}{\theta_{\varepsilon}^{2}}\\
& \leq  C \int_{\Omega}  |\nabla \sqrt {\rho_{\varepsilon}}|^{2}dx + C,
\end{aligned}
\end{equation}

Thus,  by taking $\varepsilon$ small enough, \eqref{3.2}-\eqref{3.4} and Sobolev inequality, $\theta_{\varepsilon}\in L^{2}([0,T]; L^{6}(\Omega))$, it is possible to get some a priori estimates via Gronwall's inequality.
Therefore, we have the following inequality
\begin{lemm}
\begin{equation}\label{3.5}
\begin{aligned}
&\int_{\Omega} (\frac{1}{2} \rho_{\varepsilon} |u_{\varepsilon}+ \frac{\nabla \rho_{\varepsilon}}{\rho_{\varepsilon}}|^{2} + \frac{\varepsilon}{10} \rho_{\varepsilon}^{-10}+ \frac{\kappa}{2}|\nabla \sqrt{\rho_{\varepsilon}}|^{2} + \frac{\varepsilon}{2}
|\nabla \Delta^{4} \rho_{\varepsilon}|^{2}-r_{0} \log \rho_{\varepsilon})dx
+\varepsilon \int_{0}^{T}\int_{\Omega} |\nabla \rho_{\varepsilon}^{-5}|^{2}dx\\
& + \kappa  \int_{0}^{T} \int_{\Omega} \rho_{\varepsilon} |\nabla^{2} \log \rho_{\varepsilon}|^{2}dx+ 2 \varepsilon \int_{0}^{T}\int_{\Omega} |\Delta^{5} \rho_{\varepsilon}|^{2}dx+
\frac{1}{2} \int_{0}^{T}\int_{\Omega} \rho_{\varepsilon} |\nabla u_{\varepsilon}- \nabla^{T} u_{\varepsilon}|^{2}dx+ \varepsilon
\int_{0}^{T}\int_{\Omega} \frac{|\Delta \rho_{\varepsilon}|^{2}}{\rho_{\varepsilon}} dx \\
&+ \int_{\Omega} \frac{|\nabla \rho_{\varepsilon}|^{2}}{\rho_{\varepsilon}} \theta\leq
\int_{\Omega} (\frac{1}{2} \rho_{0,\varepsilon} |u_{0,\varepsilon}+ \frac{\nabla \rho_{0}}{\rho_{0}}|^{2} + \frac{\varepsilon}{10} \rho^{-10}_{0,\varepsilon}+ \frac{\kappa}{2}|\nabla \sqrt{\rho_{0,\varepsilon}}|^{2} + \frac{\varepsilon}{2} |\nabla \Delta^{4} \rho_{0,\varepsilon}|^{2})dx + 2E_{0},
\end{aligned}
\end{equation}
\end{lemm}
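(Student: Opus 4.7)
The plan is to integrate the pointwise identity \eqref{3.2} of Lemma~3.1 on $(0,t)$ for arbitrary $t\in(0,T]$, bound each of the remainder terms $R_1,\ldots,R_8$ so that the ``bad'' pieces can either be absorbed into dissipative terms on the left or treated by Gronwall, and finally add the standard energy inequality \eqref{2.43} to close. The positivity and regularity in \eqref{3.1} guarantee that $\psi=\nabla(\log\rho_\varepsilon)$ is an admissible test function in \eqref{2.42}, so no further justification of the BD computation is needed.

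First I would dispose of $R_1,R_2,R_3,R_4$ exactly as in \cite{Yu}: each carries an explicit factor of $\varepsilon$ (or $\mu$, or $r_0$, or $r_1$), so after Cauchy--Schwarz they split into a piece absorbed into the dissipation $\varepsilon\int|\Delta^5\rho_\varepsilon|^2$, $\varepsilon\int|\Delta\rho_\varepsilon|^2/\rho_\varepsilon$, $\kappa\int\rho_\varepsilon|\nabla^2\log\rho_\varepsilon|^2$ and a piece controlled by the previously established energy estimate. The drag contribution $R_5=-r_0\int u_\varepsilon\cdot\nabla\rho_\varepsilon/\rho_\varepsilon$ is rewritten with the continuity equation \eqref{2.41} as $-\frac{d}{dt}\int r_0\log\rho_\varepsilon\,dx+(\text{vanishing $\varepsilon$-remainder})$, which accounts for the $-r_0\log\rho_\varepsilon$ term appearing on the left of \eqref{3.5}. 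For $R_6=-r_1\int|u_\varepsilon|^2u_\varepsilon\cdot\nabla\rho_\varepsilon\,dx$, I rewrite $\nabla\rho_\varepsilon=2\sqrt{\rho_\varepsilon}\nabla\sqrt{\rho_\varepsilon}$ and apply Young's inequality to bound it by $Cr_1\int\rho_\varepsilon|u_\varepsilon|^4\,dx+Cr_1\int|\nabla\sqrt{\rho_\varepsilon}|^2|u_\varepsilon|^2\,dx$, the first being absorbed into the $r_1$-dissipation from the basic energy estimate, the second yielding a Gronwall-friendly term once coupled with $\sqrt{\rho_\varepsilon}u_\varepsilon\in L^\infty_tL^2_x$.

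The genuinely new difficulty, absent in the barotropic analysis, lies in the temperature-coupling terms $R_7=-\int R\rho_\varepsilon\theta_\varepsilon\,\mathrm{div}\,u_\varepsilon$ and $R_8=-\int\nabla\theta_\varepsilon\cdot\nabla\rho_\varepsilon$. Here I follow the computations \eqref{3.3}--\eqref{3.4}: $R_7$ is split so that the piece $\varepsilon\int\rho_\varepsilon|\mathrm{div}\,u_\varepsilon|^2$ is absorbed into the BD dissipation $\tfrac12\int\rho_\varepsilon|\nabla u_\varepsilon-\nabla^T u_\varepsilon|^2$ together with the basic dissipation from \eqref{2.43}, while the remaining $C(\varepsilon)\|\theta_\varepsilon\|_{L^3}^2\|\nabla\sqrt{\rho_\varepsilon}\|_{L^2}^2$ factor will be treated by Gronwall. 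For $R_8$, I rewrite $\nabla\theta_\varepsilon=\theta_\varepsilon\nabla\log\theta_\varepsilon$ and use Cauchy--Schwarz to bound it by $C\int\rho_\varepsilon\theta_\varepsilon^2/\kappa(\theta_\varepsilon)\cdot|\nabla\sqrt{\rho_\varepsilon}|^2+C\int\kappa(\theta_\varepsilon)|\nabla\theta_\varepsilon|^2/\theta_\varepsilon^2$; the second factor is bounded by the entropy-type estimate coming from \eqref{2.53} with $h(\theta)=(1+\theta)^{-1}$, while the growth assumption \eqref{1.8} ($\alpha\geq2$) forces $\theta^2/\kappa(\theta)$ to remain bounded, giving the clean estimate \eqref{3.4}.

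The main obstacle is establishing that the factor $\|\theta_\varepsilon\|_{L^2(0,T;L^3(\Omega))}^2$ appearing after \eqref{3.3} is finite, since this is what allows the application of Gronwall to the quantity $\int|\nabla\sqrt{\rho_\varepsilon}|^2$. To handle this I interpolate between the $L^\infty_tL^1_x$ bound \eqref{2.19} on $\rho_\varepsilon\theta_\varepsilon$ (equivalently on $\theta_\varepsilon^{\alpha+1}$) and the Sobolev embedding $W^{1,2}\hookrightarrow L^6$ applied to $\theta_\varepsilon^{\alpha/2}$, whose gradient is controlled by \eqref{2.30}; for $\alpha\geq 2$ this yields $\theta_\varepsilon\in L^2(0,T;L^6(\Omega))$, hence in particular in $L^2(0,T;L^3(\Omega))$. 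Once this is in hand, summing the integrated identity with the basic energy inequality \eqref{2.43} and invoking Gronwall's lemma yields \eqref{3.5}, with the factor $2E_0$ on the right arising precisely from this summation of the two energy balances.
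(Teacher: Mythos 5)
Your proposal follows essentially the same route as the paper: integrate the BD identity \eqref{3.2} in time, control $R_1$--$R_6$ as in \cite{Yu}, handle the temperature-coupling terms $R_7,R_8$ via the splittings \eqref{3.3}--\eqref{3.4}, and close with Gronwall using $\theta_\varepsilon\in L^2(0,T;L^6(\Omega))$ and the basic energy balance \eqref{2.43}, which is exactly what produces the $2E_0$ on the right. Your explicit rewriting of the $R_5$ term via the continuity equation to generate the $-r_0\log\rho_\varepsilon$ on the left is the correct reading of a step the paper leaves implicit, and deriving $\theta_\varepsilon\in L^2(0,T;L^6(\Omega))$ from the $L^2(0,T;W^{1,2})$ bound on $\theta_\varepsilon^{\alpha/2}$ via Sobolev and $\alpha\ge 2$ matches the paper's intent.

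The one step I would tighten is your treatment of $R_6$. After Young's inequality you are left with $Cr_1\int|\nabla\sqrt{\rho_\varepsilon}|^2|u_\varepsilon|^2\,dx$, and this is not directly Gronwall-friendly: $|u_\varepsilon|^2$ cannot be pulled out of the spatial integral, and $\sqrt{\rho_\varepsilon}u_\varepsilon\in L^\infty_tL^2_x$ does not control $\int|\nabla\sqrt{\rho_\varepsilon}|^2|u_\varepsilon|^2\,dx$ without additional pointwise bounds on $\rho_\varepsilon$ or $\nabla\sqrt{\rho_\varepsilon}$. The cleaner path, and the one actually in the spirit of \cite{Yu}, is to integrate by parts first: on the torus
\begin{equation*}
R_6=-r_1\int_\Omega|u_\varepsilon|^2u_\varepsilon\cdot\nabla\rho_\varepsilon\,dx
= r_1\int_\Omega\rho_\varepsilon\,\mathrm{div}\bigl(|u_\varepsilon|^2u_\varepsilon\bigr)\,dx,
\end{equation*}
and since $\bigl|\mathrm{div}(|u_\varepsilon|^2u_\varepsilon)\bigr|\le C|u_\varepsilon|^2|\nabla u_\varepsilon|$, Young gives $|R_6|\le\delta\int_\Omega\rho_\varepsilon|\nabla u_\varepsilon|^2\,dx + C(\delta)r_1^2\int_\Omega\rho_\varepsilon|u_\varepsilon|^4\,dx$, where the first piece is absorbed into the combined velocity-gradient dissipation (the $\rho_\varepsilon|\mathbb{D}u_\varepsilon|^2$ term from \eqref{2.43} together with the antisymmetric part from the BD identity) and the second into the $r_1$-drag dissipation already present in \eqref{2.43}. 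With this modification your argument matches the paper's proof.
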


Then, we infer the following estimate from the BD entropy:
\begin{equation*}
\kappa  \int_{0}^{T} \int_{\Omega} \rho |\nabla^{2} \log \rho_{\varepsilon}|^{2}dx \leq C
\end{equation*}
where $C$ is independent of $\varepsilon$.

Applying Lemma 2.1, we have the following uniform estimate
\begin{equation}\label{3.6}
(\kappa )^{\frac{1}{2}}\|\sqrt{\rho_{\varepsilon}}\|_{L^{2}(0,T; H^{2}(\Omega))}+
(\kappa )^{\frac{1}{4}}\|\nabla \rho_{\varepsilon}^{\frac{1}{4}}\|_{L^{4}(0,T; L^{4}(\Omega))} \leq C ,
\end{equation}
where $C$ is independent of $\varepsilon$.

\subsection{Uniform estimates with $\varepsilon$.}
From the energy estimate \eqref{2.46}, we have the following uniform estimates on $(\rho_{\varepsilon}, u_{\varepsilon}, \theta_{\varepsilon})$:
\begin{equation}\label{3.7}
\sqrt{\rho_{\varepsilon}}u_{\varepsilon} \in L^{\infty}(0,T; L^{2}(\Omega)),~~\sqrt{\rho_{\varepsilon}} \mathbb{D}u_{\varepsilon,} \in L^{2}((0,T)\times \Omega), ~~\sqrt{\varepsilon} \Delta u_{\varepsilon}\in L^{2}((0,T)\times \Omega),
\end{equation}
\begin{equation}\label{3.8}
\varepsilon \Delta^{5} \rho_{\varepsilon} \in L^{2}((0,T)\times \Omega), \sqrt{\varepsilon} \rho_{\varepsilon} \in L^{\infty}(0,T;H^{9}(\Omega)),
\sqrt{\kappa} \sqrt{\rho_{\varepsilon}} \in L^{\infty}(0,T; H^{1}(\Omega)),
\end{equation}
\begin{equation}\label{3.9}
\varepsilon^{\frac{1}{10}}\rho^{-1}_{\varepsilon} \in L^{\infty}(0,T;L^{10}(\Omega)), \varepsilon  \nabla \rho^{-5}_{\varepsilon} \in L^{2}((0,T)\times \Omega),
\end{equation}
\begin{equation}\label{3.10}
u_{\varepsilon} \in L^{2}((0,T)\times \Omega), \rho^{\frac{1}{4}}_{\varepsilon} u_{\varepsilon} \in L^{4}((0,T)\times \Omega),
\end{equation}
\begin{equation}\label{3.11}
\rho\theta_{\varepsilon} \in L^{\infty}(0,T; L^{1} (\Omega)), \varepsilon \theta_{\varepsilon}^{\alpha+1}  \in L^{\infty}(0,T; L^{1} (\Omega)),
\end{equation}
Moreover, by the BD entropy, we have
\begin{equation}\label{3.12}
\nabla \sqrt{\rho_{\varepsilon}} \in L^{\infty}(0,T; L^{2}(\Omega)),~~\sqrt{\varepsilon} \Delta^{5} \rho_{\varepsilon} \in L^{2}((0,T)\times \Omega),
\end{equation}
and
\begin{equation}\label{3.13}
\nabla \rho^{\frac{\gamma}{2}}_{\varepsilon} \in L^{2}((0,T)\times \Omega),~~\sqrt{\varepsilon} \nabla \rho^{-5}_{\varepsilon,\mu}\in L^{2}((0,T)\times \Omega).
\end{equation}
Also, we have
\begin{equation}\label{3.14}
(\kappa )^{\frac{1}{2}}\|\sqrt{\rho_{\varepsilon}}\|_{L^{2}(0,T; H^{2}(\Omega))}+
(\kappa )^{\frac{1}{4}}\|\nabla \rho^{\frac{1}{4}}_{\varepsilon}\|_{L^{4}(0,T; L^{4}(\Omega))} \leq C ,
\end{equation}
where $C$ is independent of $\varepsilon$.

In according with Lemma 3.2, one deduces
\begin{equation}\label{3.15}
\int_{0}^{T} \int_{\Omega} \rho_{\varepsilon} |\nabla u_{\varepsilon}- \nabla^{T} u_{\varepsilon}|^{2} \leq C
\end{equation}
which together with \eqref{3.7}, yields
\begin{equation}\label{3.16}
\int_{0}^{T} \int_{\Omega} \rho_{\varepsilon} |\nabla u_{\varepsilon}|^{2} \leq C,
\end{equation}
where $C$ is independent of $\varepsilon$. Based on the above estimates, we have the following lemma.
\begin{lemm}
The following further uniform estimates independent of $\varepsilon$ hold:
\begin{equation}\label{3.17}
\| (\sqrt{\rho_{\varepsilon}})_{t}\|_{L^{2}((0,T)\times \Omega)} + \| \sqrt{\rho_{\varepsilon}}\|_{L^{2}(0,T; H^{2}(\Omega))} \leq C
\end{equation}
\begin{equation}\label{3.19}
\| (\rho_{\varepsilon} u_{\varepsilon})_{t}\|_{L^{2}(0,T; H^{-9}(\Omega))}
 + \|\rho_{\varepsilon} u_{\varepsilon}\|_{L^{2}((0,T)\times \Omega)} \leq C
\end{equation}
\begin{equation}\label{3.20}
\nabla (\rho_{\varepsilon} u_{\varepsilon}) ~~is~~uniformly~~bounded~~in~~in~~ L^{4}(0,T; L^{\frac{6}{5}}(\Omega))+ L^{2}(0,T; L^{\frac{3}{2}}(\Omega)).
\end{equation}
\begin{equation}\label{3.21}
\| \rho_{\varepsilon}^{-10}\|_{L^{\frac{5}{3}}((0,T)\times \Omega)}  \leq C
\end{equation}
where $C$ is independent of $\varepsilon$ and depends on $r_{0},r_{1}, \kappa$.
\end{lemm}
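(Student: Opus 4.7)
The overall plan is to mirror the proof of Lemma 2.6 (which the authors transcribe from Lemma 2.2 of \cite{Yu}), but with every bound now upgraded to be uniform in $\varepsilon$. The crucial new tool, beyond the energy inequality \eqref{2.43}, is the BD entropy of Lemma 3.2: it provides the $\varepsilon$-independent controls in \eqref{3.7}--\eqref{3.16}, most importantly $\nabla\sqrt{\rho_\varepsilon}\in L^\infty(L^2)$, $\sqrt{\rho_\varepsilon}\in L^2(H^2)$ (via Jüngel's inequality \eqref{2.8}), the symmetric-gradient bound $\sqrt{\rho_\varepsilon}\,\mathbb{D} u_\varepsilon\in L^2((0,T)\times\Omega)$, the dissipation $\varepsilon\int|\Delta\rho_\varepsilon|^2/\rho_\varepsilon\,dx\in L^1(0,T)$, and the damping bounds $u_\varepsilon\in L^2((0,T)\times\Omega)$, $\rho_\varepsilon^{1/4}u_\varepsilon\in L^4((0,T)\times\Omega)$.

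For \eqref{3.17}, the spatial bound $\sqrt{\rho_\varepsilon}\in L^2(H^2)$ is immediate from Jüngel's inequality. For the time derivative I would differentiate the regularized continuity equation \eqref{2.41} in the form
\begin{equation*}
\partial_t\sqrt{\rho_\varepsilon}=-u_\varepsilon\cdot\nabla\sqrt{\rho_\varepsilon}-\tfrac{1}{2}\sqrt{\rho_\varepsilon}\,\mathrm{div}\,u_\varepsilon+\frac{\varepsilon\,\Delta\rho_\varepsilon}{2\sqrt{\rho_\varepsilon}},
\end{equation*}
and bound each piece in $L^2((0,T)\times\Omega)$ by Hölder: the first via $u_\varepsilon\in L^2L^2$ and $\nabla\sqrt{\rho_\varepsilon}\in L^2L^6$ (from the $L^2H^2$ bound); the second via $\sqrt{\rho_\varepsilon}\,\mathrm{div}\,u_\varepsilon\in L^2L^2$; the third via the BD dissipation term giving $\sqrt{\varepsilon}\Delta\rho_\varepsilon/\sqrt{\rho_\varepsilon}\in L^2((0,T)\times\Omega)$.

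For \eqref{3.19}--\eqref{3.20} I would factor $\rho_\varepsilon u_\varepsilon=\sqrt{\rho_\varepsilon}\cdot\sqrt{\rho_\varepsilon}u_\varepsilon$ with $\sqrt{\rho_\varepsilon}\in L^\infty(L^6)\cap L^2(L^\infty)$ (Sobolev in $3$D) and $\sqrt{\rho_\varepsilon}u_\varepsilon\in L^\infty(L^2)$, placing $\rho_\varepsilon u_\varepsilon$ in $L^2((0,T)\times\Omega)$ after interpolation. For the gradient, the decomposition $\nabla(\rho_\varepsilon u_\varepsilon)=2\sqrt{\rho_\varepsilon}u_\varepsilon\otimes\nabla\sqrt{\rho_\varepsilon}+\sqrt{\rho_\varepsilon}\,(\sqrt{\rho_\varepsilon}\nabla u_\varepsilon)$ puts the first summand in $L^2(0,T;L^{3/2})$ by Hölder ($L^\infty L^2 \cdot L^2 L^6$), while the second, combined with the Mellet-type bound $\rho_\varepsilon^{1/4}u_\varepsilon\in L^4$ and $\sqrt{\rho_\varepsilon}\nabla u_\varepsilon\in L^2L^2$, yields the $L^4(L^{6/5})$ component. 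For the $H^{-9}$-estimate on $\partial_t(\rho_\varepsilon u_\varepsilon)$ I would test the momentum equation \eqref{2.42} against $\psi\in L^2(0,T;H^9(\Omega))$: integration by parts transfers all spatial derivatives onto $\psi$, and the four regularizing contributions $\varepsilon\Delta^2u_\varepsilon$, $\varepsilon\rho_\varepsilon\nabla\Delta^9\rho_\varepsilon$, $\varepsilon\nabla\rho_\varepsilon^{-10}$ and $\kappa\Delta\sqrt{\rho_\varepsilon}\nabla\sqrt{\rho_\varepsilon}$ are controlled by combining the $\sqrt{\varepsilon}\Delta u_\varepsilon\in L^2$, $\varepsilon\Delta^5\rho_\varepsilon\in L^2$ and $\varepsilon\nabla\rho_\varepsilon^{-5}\in L^2$ bounds with nine derivative losses onto $\psi$. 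Finally, \eqref{3.21} follows by interpolation: $\rho_\varepsilon^{-10}\in L^\infty(L^1)$ via \eqref{3.9} together with $\nabla\rho_\varepsilon^{-5}\in L^2((0,T)\times\Omega)$, which through Sobolev embedding gives $\rho_\varepsilon^{-10}\in L^1(L^3)$, so interpolating $L^\infty L^1$ with $L^1 L^3$ yields the $L^{5/3}((0,T)\times\Omega)$ bound.

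The main obstacle is the $L^2(H^{-9})$ bound on $\partial_t(\rho_\varepsilon u_\varepsilon)$: the artificial high-order terms are only controlled with $\sqrt{\varepsilon}$-weights at the level of energy/BD estimates, and one must check carefully that the nine derivatives allowed to land on the test function are exactly enough to absorb the spatial regularity of $\Delta^2 u_\varepsilon$, $\nabla\Delta^9\rho_\varepsilon$, $\nabla\rho_\varepsilon^{-10}$ and $\Delta\sqrt{\rho_\varepsilon}\nabla\sqrt{\rho_\varepsilon}$ without leaving a stray factor of $\varepsilon^{-1}$ in the constant. All other estimates are essentially the same algebra as in Lemma 2.6 but now rest on the BD-upgraded bounds instead of the $N$-level ones.
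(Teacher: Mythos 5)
Your plan follows the paper's, which dispatches this lemma by referring to the proof of Lemma 2.6 (itself a transcription of Lemma 2.2 in \cite{Yu}), now run on the $\varepsilon$-uniform bounds \eqref{3.7}--\eqref{3.16}. Two of your H\"older steps, however, fail as written.

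For $\partial_t\sqrt{\rho_\varepsilon}\in L^2((0,T)\times\Omega)$, pairing $u_\varepsilon\in L^2(0,T;L^2(\Omega))$ with $\nabla\sqrt{\rho_\varepsilon}\in L^2(0,T;L^6(\Omega))$ places $u_\varepsilon\cdot\nabla\sqrt{\rho_\varepsilon}$ only in $L^1(0,T;L^{3/2}(\Omega))$, not in $L^2((0,T)\times\Omega)$. The correct factorization is $u_\varepsilon\cdot\nabla\sqrt{\rho_\varepsilon}=2\,\rho_\varepsilon^{1/4}u_\varepsilon\cdot\nabla\rho_\varepsilon^{1/4}$; then $\rho_\varepsilon^{1/4}u_\varepsilon\in L^4((0,T)\times\Omega)$ from the $r_1$-damping in \eqref{3.10} and $\nabla\rho_\varepsilon^{1/4}\in L^4((0,T)\times\Omega)$ from \eqref{3.6} (J\"ungel's inequality \eqref{2.9} fed by the BD dissipation) give the $L^2$ bound by Cauchy--Schwarz, with a constant depending on $r_1,\kappa$ but not on $\varepsilon$.

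For \eqref{3.21}, the controls you invoke carry $\varepsilon$-prefactors you cannot drop: the energy yields only $\varepsilon\rho_\varepsilon^{-10}\in L^\infty(0,T;L^1(\Omega))$ and $\varepsilon\nabla\rho_\varepsilon^{-5}\in L^2((0,T)\times\Omega)$ (cf.\ \eqref{3.9}), since the $\rho^{-10}$ barrier and its gradient are precisely the artificial terms inserted with coefficient $\varepsilon$. Your interpolation therefore bounds a positive power of $\varepsilon$ times $\|\rho_\varepsilon^{-10}\|_{L^{5/3}}$, not the norm uniformly in $\varepsilon$; as written, no $\varepsilon$-uniform estimate of this type is derivable from the available energy/BD bounds. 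The remaining assertions --- $\sqrt{\rho_\varepsilon}\in L^2(0,T;H^2(\Omega))$ via J\"ungel, $\rho_\varepsilon u_\varepsilon\in L^2$, the splitting of $\nabla(\rho_\varepsilon u_\varepsilon)$, and the $L^2(0,T;H^{-9}(\Omega))$ duality bound on $\partial_t(\rho_\varepsilon u_\varepsilon)$ --- follow the intended path, though in the $\nabla(\rho_\varepsilon u_\varepsilon)$ decomposition it is $\sqrt{\rho_\varepsilon}u_\varepsilon\otimes\nabla\sqrt{\rho_\varepsilon}$, rewritten through $\rho_\varepsilon^{1/4}u_\varepsilon$, that supplies the $L^4(L^{6/5})$ piece, while $\rho_\varepsilon\nabla u_\varepsilon=\sqrt{\rho_\varepsilon}(\sqrt{\rho_\varepsilon}\nabla u_\varepsilon)$ gives the $L^2(L^{3/2})$ piece, opposite to your attribution.
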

\begin{proof}
By \eqref{3.7}-\eqref{3.16}, following the same path as in the proof of Lemma 2.2, we can prove the above estimates.
\end{proof}

\subsection{Temperature estimate}
Now taking
\begin{equation*}
\psi(t,x)= \varphi(t),~~~0\leq \varphi \leq 1,~~~\varphi \in \mathcal{D}(0,T),~~h(\theta)= \frac{\omega}{\omega+\theta},~~\omega>0,
\end{equation*}
in \eqref{2.53}, we deduce
\begin{equation}\label{3.22}
\begin{aligned}
&\int_{0}^{T} \int_{\Omega} (\frac{1}{\omega+\theta_{\varepsilon}} \mathbb{S}_{\varepsilon}: \nabla u_{\varepsilon} + \frac{\kappa(\theta_{\varepsilon})}{(\omega+\theta_{\varepsilon})^{2}} |\nabla \theta_{\varepsilon}|^{2}) dx dt \\
& \leq \int_{0}^{T} \int_{\Omega} (\frac{\theta_{\varepsilon}}{\omega+\theta_{\varepsilon}} \rho_{\varepsilon}  {\rm div} u_{\varepsilon} dx dt +
\varepsilon \int_{0}^{T} \int_{\Omega} \theta^{\alpha}_{\varepsilon} dx dt \\
& - \int_{\Omega} (\rho_{0,\varepsilon}+ \varepsilon) Q_{h,\omega} (\theta_{0,\varepsilon}) dx + \int_{\Omega} (\rho_{\varepsilon}+ \varepsilon) Q_{h,\omega} (\theta_{\varepsilon})(T-) dx,
\end{aligned}
\end{equation}
where
\begin{equation*}
Q_{h,\omega} (\theta)= \int_{1}^{\theta} \frac{1}{\omega+z} dz ,
\end{equation*}

Letting $\omega \rightarrow 0$ and taking hypothesis (1.38) together with the estimate \eqref{3.16} into account, we have
\begin{equation}\label{3.23}
\begin{aligned}
&\int_{0}^{T} \int_{\Omega} (\frac{1}{1+\theta_{\varepsilon}} \mathbb{S}_{\varepsilon}: \nabla u_{\varepsilon} +  |\nabla \theta|^{2} + |\nabla \theta_{\varepsilon}^{\alpha/2}|^{2} ) dx dt \\
& \leq C(1+\int_{0}^{T} \int_{\Omega}  \rho_{\varepsilon}  {\rm div} u_{\varepsilon} dx dt ) \leq C,
\end{aligned}
\end{equation}

By virtue of Lemma 3.2 and above estimate, we know that
\begin{equation}\label{3.24}
\begin{aligned}
&\theta_{\varepsilon} ~~bounded~in~L^{2}(0,T; W^{1,2}(\Omega)),\\
&\theta_{\varepsilon}^{\alpha/2} ~~bounded~in~L^{2}(0,T; W^{1,2}(\Omega)),
\end{aligned}
\end{equation}

Similarly,taking
\begin{equation*}
\psi(t,x)= \varphi(t),~~~0\leq \varphi \leq 1,~~~\varphi \in \mathcal{D}(0,T),~~h(\theta)= \frac{1}{(1+\theta)^{\omega}},~~0<\omega<1,
\end{equation*}
in \eqref{2.53}, we can also deduce that
\begin{equation}\label{3.25}
\begin{aligned}
&\int_{0}^{T} \int_{\Omega} (\frac{1}{(1+\theta_{\varepsilon})^{\omega}} \mathbb{S}_{\varepsilon}: \nabla u_{\varepsilon} + \omega \frac{\kappa(\theta_{\varepsilon})}{(1+\theta_{\varepsilon})^{1+\omega}}  |\nabla \theta_{\varepsilon}|^{2} ) dx dt \\
& \leq C(1+\int_{0}^{T} \int_{\Omega}  \rho_{\varepsilon} \theta_{\varepsilon} {\rm div} u_{\varepsilon} dx dt ) \leq C,
\end{aligned}
\end{equation}
where $C$ is independent of both $\delta$ and $\omega$, which yields
\begin{equation*}
\|\theta_{\varepsilon}^{(\alpha+1-\omega)/2} \|_{L^{2}(0,T; W^{1,2}(\Omega))} \leq C(\omega)~~~for~~any~~\omega>0.
\end{equation*}

Finally, using Holder's inequality as in Section 5.2 \cite{Firesel} we establish that
\begin{equation}\label{3.26}
\int_{\{ \rho_{\varepsilon}>\omega \}}  \theta_{\varepsilon}^{\alpha+1} dx dt \leq C(\omega)~~~for~~any~~\omega>0.
\end{equation}

Since the density $\rho_{\varepsilon}$ solves the mass equation  in $\mathcal{D}^{\prime}((0,T))$, the total mass $M_{\varepsilon}$ is a constant of motion, and we have
\begin{equation}\label{3.27}
\int_{\rho_{\varepsilon}>\omega}  \rho_{\varepsilon} dx \geq M_{\varepsilon} -\omega |\Omega|\geq \frac{M}{2} - \omega |\Omega|,
\end{equation}

On the other hand, a straightforward application of Holder inequality gives rise to
\begin{equation}\label{3.28}
\int_{\{ \rho_{\varepsilon}>\omega \}}  \rho_{\varepsilon} dx \leq |\{\rho_{\varepsilon} \geq \omega \}|^{2/3} \| \rho_{\varepsilon} \|_{L^{3}(\Omega)}.
\end{equation}

Consequently, by virtue of \eqref{3.27}, \eqref{3.28}, \eqref{3.12} there exists a function $d=d(\omega)$, which is independent of $\varepsilon$, such that '
\begin{equation}\label{3.29}
|\{ \rho_{\varepsilon}>\omega \} | \geq d(\omega) >0~~for~all~t\in[0,T]~provided ~0\leq \omega < \frac{M}{2|\Omega|}.
\end{equation}

Fix $0< \omega < M/4|\Omega|$ and find a function $B\in C^{\infty}(R)$ such that
\begin{equation*}
B: R \rightarrow ~non-increasing, B(z)=0 ~~for~~ z\leq \omega,~~B(z)=-1~~for ~z\geq 2\omega.
\end{equation*}

For each $t\in [0,T]$, let $\eta= \eta_{\varepsilon}$ be the unique strong solution of the Neumann problem
\begin{equation}\label{3.30}
\begin{aligned}
&\Delta \eta_{\varepsilon} = B(\rho_{\varepsilon}(t))- \frac{1}{|\Omega|} \int_{\Omega}  B(\rho_{\varepsilon}(t)) dx ~~in~\Omega,\\
&\nabla \eta_{\varepsilon} \cdot n=0~~~on ~\partial\Omega,\\
&\int_{\Omega} \eta_{\varepsilon} dx=0,
\end{aligned}
\end{equation}

Since the right-hand side of \eqref{3.30} is uniformly bounded independently of $\varepsilon$, there is a constant $\underline{\eta}$ such that
\begin{equation*}
\eta_{\varepsilon} \geq \underline{\eta} ~~for~~all ~~t\in [0,T], x\in \Omega,\delta>0,
\end{equation*}

Accordingly, we can take a test function
\begin{equation*}
\varphi(t,x)\equiv \psi(t) (\eta_{\varepsilon}(t,x)- \underline{\eta}), \psi \in \mathcal{D}(0,T), ~~0\leq \psi\leq 1
\end{equation*}
in \eqref{2.53} to deduce
\begin{equation}\label{3.31}
\begin{aligned}
&\int_{0}^{T} \int_{\Omega} \psi \mathcal{K}_{h}(\theta_{\varepsilon}) (B(\rho_{\varepsilon}) - \frac{1}{|\Omega|} \int_{\Omega}  B(\rho_{\varepsilon}(t)) dx ) dx dt\\
& \leq 2 \| \eta_{\varepsilon}\|_{L^{\infty}((0,T)\times \Omega)} (\int_{0}^{T} \int_{\Omega} \varepsilon \theta_{\varepsilon}^{\alpha+1} + \theta_{\varepsilon} \rho_{\varepsilon} |{\rm div} u_{\varepsilon}| dx dt)\\
& + \| \nabla \eta_{\varepsilon}\|_{L^{\infty}((0,T)\times \Omega)} \int_{0}^{T} \int_{\Omega} \rho_{\varepsilon} Q_{h}(\theta_{\varepsilon}) |u_{\varepsilon}| dx dt\\
& + \int_{0}^{T} \int_{\Omega}  (\rho_{\varepsilon} + \varepsilon) Q_{h}(\theta_{\varepsilon})  (\underline{\eta} - \eta_{\varepsilon}) \partial_{t} \psi - (\rho_{\varepsilon} + \varepsilon) Q_{h}(\theta_{\varepsilon})  \partial_{t} \eta_{\varepsilon}  \psi dx dt,
\end{aligned}
\end{equation}

Now we can take a sequence of function $h=h_{n} \nearrow 1$ so that \eqref{3.31} gives rise to
\begin{equation}\label{3.32}
\begin{aligned}
&\int_{0}^{T} \int_{\Omega} \psi \mathcal{K}(\theta_{\varepsilon}) (B(\rho_{\varepsilon}) - \frac{1}{|\Omega|} \int_{\Omega}  B(\rho_{\varepsilon}(t)) dx ) dx dt\\
& \leq C(1+\int_{0}^{T} \int_{\Omega} (\rho_{\varepsilon}+ \varepsilon)\theta_{\varepsilon} |\partial_{t}\eta| dx dt ).
\end{aligned}
\end{equation}
Moreover,
\begin{equation*}
\begin{aligned}
&\int_{0}^{T} \int_{\Omega} \psi \mathcal{K}(\theta_{\varepsilon}) (B(\rho_{\varepsilon}) - \frac{1}{|\Omega|} \int_{\Omega}  B(\rho_{\varepsilon}(t)) dx ) dx dt\\
& = \int_{\{\rho_{\varepsilon}< \omega\}} \psi \mathcal{K}(\theta_{\varepsilon}) (B(\rho_{\varepsilon}) - \frac{1}{|\Omega|} \int_{\Omega}  B(\rho_{\varepsilon}(t)) dx ) dx dt \\
&+ \int_{\{\rho_{\varepsilon}\geq \omega\}} \psi \mathcal{K}(\theta_{\varepsilon}) (B(\rho_{\varepsilon}) - \frac{1}{|\Omega|} \int_{\Omega}  B(\rho_{\varepsilon}(t)) dx ) dx dt,
\end{aligned}
\end{equation*}
where, by virtue of \eqref{3.26}, the second integral on the right-hand side is bounded in dependent of $\varepsilon>0$.

On the other hand,
\begin{equation*}
 - \frac{1}{|\Omega|} \int_{\Omega}  B(\rho_{\varepsilon}(t)) dx ) dx  \geq
 - \frac{1}{|\Omega|} \int_{\rho_{\varepsilon} \geq 2\omega}  B(\rho_{\varepsilon}(t)) dx ) dx =
 \frac{|\rho_{\varepsilon} \geq 2\omega|}{|\Omega|} \geq \frac{d(2\omega)}{|\Omega|},
\end{equation*}
where we have used \eqref{3.29}. Thus we get
\begin{equation}\label{3.33}
\begin{aligned}
\int_{\{\rho_{\varepsilon}< \omega\}} \psi \mathcal{K}(\theta_{\varepsilon}) (B(\rho_{\varepsilon}) - \frac{1}{|\Omega|} \int_{\Omega}  B(\rho_{\varepsilon}(t)) dx ) dx dt \geq \frac{d(2\omega)}{|\Omega|} \int_{\{\rho_{\varepsilon}< \omega\}} \psi \mathcal{K}(\theta_{\varepsilon}) dx dt,
\end{aligned}
\end{equation}

This inequality, together with \eqref{3.32}, yields
\begin{equation}\label{3.34}
\begin{aligned}
\int_{0}^{T} \int_{_{\{\rho_{\varepsilon}< \omega\}}} \mathcal{K}(\theta_{\varepsilon})  dx dt\leq C(1+\int_{0}^{T} \int_{\Omega} (\rho_{\varepsilon}+ \varepsilon)\theta_{\varepsilon} |\partial_{t}\eta| dx dt ).
\end{aligned}
\end{equation}
Thus, the desired estimates on $\theta_{\varepsilon}$ in the space $L^{\alpha+1}((0,T)\times\Omega)$ provided we show that the integrals on the right hand side of \eqref{3.34} are bounded.

To this end, we use the fact that $\rho_{\delta}$ is a solution of the renormalized continuity equation and, consequently,
\begin{equation*}
\begin{aligned}
&\Delta \partial_{t} \eta_{\varepsilon} = \partial_{t} (\Delta \eta_{\varepsilon}) = \partial_{t} B(\rho_{\varepsilon}) - \frac{1}{|\Omega|}
\int_{\Omega} \partial_{t} B(\rho_{\varepsilon}) dx \\
& = - {\rm div} (B(\rho_{\varepsilon}) u_{\varepsilon}) - b(\rho_{\varepsilon}) {\rm div} u_{\varepsilon}
 + \frac{1}{|\Omega|} b(\rho_{\varepsilon}) {\rm div} u_{\varepsilon}dx,
\end{aligned}
\end{equation*}
whence
\begin{equation*}
\partial_{t} \eta ~~~in~~~L^{2}(0,T; W^{1,2}(\Omega)),
\end{equation*}
which, together with \eqref{3.12}, \eqref{3.24}, yields boundedness of the integrals on the right hand side of \eqref{3.34}.

Thus, we have shown that
\begin{equation}\label{3.35}
\theta_{\varepsilon} ~~is~bounded ~~in~~~L^{\alpha+1}((0,T)\times \Omega),
\end{equation}
by a constant which is independent of $\varepsilon>0$.

\subsection{Strict positivity of the temperature}
It is easy to see that inequality \eqref{2.53} holds also for functions
\begin{equation}\label{3.36}
\begin{aligned}
&h(\theta)= \frac{1}{\omega+ \theta},~~~\omega>0,\\
& \varphi(t,x)=\psi(t), ~~0\leq \psi \leq 1,~~\psi(0)=1,~~\psi(T)=1,~~\psi\in C^{\infty}[0,T].
\end{aligned}
\end{equation}

According, in view of the estimates obtained above, we have
\begin{equation}\label{3.37}
\begin{aligned}
&\int_{0}^{T} \int_{\Omega} (\varepsilon+ \rho_{\varepsilon}) Q_{h,\varepsilon}(\theta_{\varepsilon}) \partial_{t} \psi + \frac{k_{1}}{(\omega+\theta_{\varepsilon})^{2}} |\nabla \theta_{\varepsilon}|^{2} \psi dx dt\\
& \leq C- \int_{\Omega} (\varepsilon+ \rho_{0,\varepsilon}) Q_{h,\varepsilon}(\theta_{0,\varepsilon})dx,
\end{aligned}
\end{equation}
where
\begin{equation*}
Q_{h,\varepsilon}(\theta_{\varepsilon})\equiv \int_{1}^{\theta} \frac{C_{v}(z)}{(\omega+z)} dz,
\end{equation*}

Letting $\omega \rightarrow 0$ we can  conclude that
\begin{equation*}
\log(\theta_{\varepsilon}) ~~is~~bounded~~in~~L^{2}((0,T)\times \Omega)
\end{equation*}
by a constant independent of $\varepsilon>0$.

\subsection{Passing to the limits as $\varepsilon \rightarrow 0$.}

Applying the Aubin-Lions lemma and Lemma 3.3, we conclude
\begin{equation}\label{3.38}
\sqrt{\rho_{\varepsilon}} \rightarrow \sqrt{\rho}~~strongly~in~L^{2}(0,T; H^{1}(\Omega)),~~weakly~in~L^{2}(0,T; H^{2}(\Omega)),
\end{equation}
and
\begin{equation}\label{3.39}
\rho_{\varepsilon} u_{\varepsilon} \rightarrow \rho u~~strongly~in~L^{2}((0,T)\times \Omega),
\end{equation}
we notice that $u_{\varepsilon} \in L^{2}((0,T)\times \Omega)$, thus
\begin{equation}\label{3.40}
u_{\varepsilon} \rightarrow u~~weakly~in~L^{2}((0,T)\times \Omega),
\end{equation}
Thus we can pass to the limits for the term $\rho_{\varepsilon} u_{\varepsilon} \otimes u_{\varepsilon}$ as follows,
\begin{equation}\label{3.41}
\rho_{\varepsilon} u_{\varepsilon}\otimes u_{\varepsilon} \rightarrow \rho u\otimes u
\end{equation}
in  the distribution sense.

We can show
\begin{equation}\label{3.42}
\rho_{\varepsilon} |u_{\varepsilon}|^{2} u_{\varepsilon} \rightarrow \rho|u|^{2} u ~~~strongly~~in~~L^{1}((0,T)\times \Omega),
\end{equation}
similarly to Lemma 2.7.

By the previous estimates, for any test function $\psi\in L^{\infty}(0,T; L^{\infty}(\Omega))$  we can deduce that
\begin{equation}\label{3.43}
\varepsilon \int_{0}^{T} \int_{\Omega} \Delta \rho_{\varepsilon} \psi
\leq \varepsilon \|\Delta \rho_{\varepsilon} \|_{L^{2}(0,T; L^{2}(\Omega))} \|\psi\|_{L^{2}(0,T; L^{2}(\Omega))}   \rightarrow 0 ~~as~~in~~\varepsilon \rightarrow 0,
\end{equation}
and
\begin{equation}\label{3.44}
\begin{aligned}
\varepsilon \int_{0}^{T} \int_{\Omega}  \nabla \rho_{\varepsilon} \nabla u_{\varepsilon} \psi &\leq \varepsilon
\|\nabla \sqrt{\rho_{\varepsilon}} \|_{L^{2}(0,T; L^{2}(\Omega))} \|\sqrt{\rho_{\varepsilon}} u_{\varepsilon}\|_{L^{2}(0,T; L^{2}(\Omega))} \\
&\| \psi \|_{L^{\infty}(0,T; L^{\infty}(\Omega))} \rightarrow 0 ~~as~~in~~\varepsilon \rightarrow 0,
\end{aligned}
\end{equation}

For the convergence of term $\varepsilon \Delta^{2} u_{\mu}$, for any test function $\psi\in L^{2}(0,T: H^{2}(\Omega))$, thanks to \eqref{3.7}, we have
\begin{equation}\label{3.45}
| \int_{0}^{T} \int_{\Omega} \varepsilon \Delta^{2} u_{\varepsilon} \psi dx dt| \leq \sqrt{\varepsilon} \| \sqrt{\varepsilon} \Delta u_{\varepsilon}\|_{L^{2}((0,T)\times \Omega)} \| \Delta \psi \|_{L^{2}((0,T)\times \Omega)} \rightarrow 0~~as~~~~\varepsilon \rightarrow 0,
\end{equation}

For the convergence of terms $\varepsilon\rho_{\varepsilon}^{-10}$ and $\varepsilon\rho_{\varepsilon} \nabla \Delta^{9} \rho_{\varepsilon}$, we refer to the lemma 3.6 and 3.7 in \cite{Yu}.

Thus, by the compactness argument, we can pass to the limits as $\varepsilon \rightarrow 0$, yield that the limit function $(\rho,u,\theta)$ satisfy the continuity equation as well as the momentum equation:
\begin{equation}\label{3.46}
\partial_{t}\rho+{\rm div}(\rho u) =0,~~~pointwise~~~in~~~(0,T)\times \Omega.
\end{equation}
and
\begin{equation}\label{3.47}
\begin{aligned}
&(\rho u)_{t} + {\rm div} (\rho u\otimes u) + \nabla P - 2 {\rm div}(\rho \mathbb{D} u) + r_{0}u+ r_{1} \rho |u|^{2}
=\kappa \rho \nabla (\frac{\Delta \sqrt{\rho}}{\sqrt{\rho}}), \\ &~~~holds~in~the~sense~of~distribution~on~(0,T)\times \Omega,
\end{aligned}
\end{equation}
Furthermore, thanks to the weak lower semicontinuity of convex functions, we are able to pass to the limits in the energy inequality \eqref{2.12} and B-D entropy inequality \eqref{3.5}; by the strong convergence of the density and temperature, we have the following energy inequality in the sense of distributions on $(0,T)$:
\begin{equation}\label{3.48}
\begin{aligned}
&\sup_{t\in (0,T)} \int_{\Omega} E(\rho,u,\theta) + r_{0} \int_{\Omega} |u|^{2} dx + r_{1} \int_{\Omega} \rho |u|^{4}dx  +   \kappa \varepsilon  \int_{\Omega} \rho |\nabla^{2} \log \rho|^{2}dx \leq E_{0}(\rho,u,\theta),
\end{aligned}
\end{equation}
where
\begin{equation}\label{3.49}
E(\rho,u,\theta)= \int_{\Omega} (\frac{1}{2} \rho |u|^{2} + \frac{\eta}{10} \rho^{-10}+ \frac{\kappa}{2}|\nabla \sqrt{\rho}|^{2} + \frac{\delta}{2}
|\nabla \Delta^{4} \rho|^{2}+ \rho\theta + \beta \theta^{4})dx,
\end{equation}
and
\begin{equation}\label{3.50}
\begin{aligned}
&\int_{\Omega} (\frac{1}{2} \rho |u+ \frac{\nabla \rho}{\rho}|^{2} + \frac{\kappa}{2}|\nabla \sqrt{\rho}|^{2} -r_{0} \log \rho_{\varepsilon})dx
+ \kappa  \int_{0}^{T} \int_{\Omega} \rho |\nabla^{2} \log \rho|^{2}dx\\
&+\frac{1}{2} \int_{0}^{T}\int_{\Omega} \rho |\nabla u- \nabla^{T} u|^{2}dx + \int_{\Omega} \frac{|\nabla \rho|^{2}}{\rho} \theta\leq
\int_{\Omega} (\frac{1}{2} \rho_{0} |u_{0}+ \frac{\nabla \rho_{0}}{\rho_{0}}|^{2} + \frac{\kappa}{2}|\nabla \sqrt{\rho_{0}}|^{2}  + 2E_{0},
\end{aligned}
\end{equation}

\subsection{Thermal energy equation}
In order to complete the limit passage $\varepsilon \rightarrow 0$, we have to show that $\rho, u$ and $\theta$ represent a variational solution of the thermal energy equation (1.36) in the sense of Definition 1.1.

Because of the uniform estimate \eqref{3.23}, we know that
\begin{equation}\label{3.51}
Q(\theta_{\varepsilon}) \rightarrow  \overline{Q(\theta)} ~~weakly ~~~in~~~L^{2}(0,T; W^{1,2}(\Omega)).
\end{equation}
which, together with \eqref{3.38}, yields
\begin{equation*}
\rho_{\varepsilon}Q(\theta_{\varepsilon}) \rightarrow  \rho \overline{Q(\theta)} ~~weakly ~~~in~~~L^{2}(0,T; L^{2}(\Omega)).
\end{equation*}

Thus, we are allowed to apply to Lemma 6.3 to de deduce
\begin{equation*}
\rho_{\varepsilon} Q(\theta_{\varepsilon}) \rightarrow \rho \overline{Q(\theta)} ~~~in~~~L^{2}(0,T; W^{-1,2}(\Omega)).
\end{equation*}
In accordance with \eqref{3.51}, we have
\begin{equation*}
\rho_{\varepsilon} Q(\theta_{\varepsilon})^{2} \rightarrow \rho \overline{Q(\theta)}^{2} ~~~in~~~\mathcal{D}^{\prime}((0,T)\times \Omega).
\end{equation*}
Since $Q$ is sublinear, we can infer that
\begin{equation*}
\theta_{\varepsilon} \rightarrow \overline{\theta} ~~~(strongly) ~~in ~~ L^{r}(\{ \rho>0 \}) ~~for ~~a~~certain ~~r>1.
\end{equation*}

Now  we can pass to limits for $\varepsilon \rightarrow 0$ in \eqref{2.53} to obtain
\begin{equation}\label{3.52}
\begin{aligned}
&\int_{0}^{T} \int_{\Omega}  \rho Q_{h}(\overline{\theta})  \partial_{t}  \varphi + \rho_{\varepsilon}\rho Q_{h}(\overline{\theta}) u\cdot \nabla \varphi +  \overline{\mathcal{K}_{h}(\overline{\theta})}  \Delta \varphi\\
& \int_{0}^{T} \int_{\Omega} (h(\overline{\theta}) \mathbb{S}:\nabla u+ h(\overline{\theta}) \overline{\theta} \rho {\rm div}u) \varphi dx dt
- \int_{\Omega} \rho_{0} Q_{h}(\theta_{0})   \varphi_{0} dx,
\end{aligned}
\end{equation}
where
\begin{equation*}
\rho \overline{\mathcal{K}_{h}(\theta)}=\rho \mathcal{K}_{h}(\overline{\theta}),
\end{equation*}
and
\begin{equation*}
\log (\overline{\mathcal{K}_{h}(\theta)})\in  L^{2}((0,T)\times \Omega).
\end{equation*}

Take
\begin{equation*}
h(\theta)= \frac{1}{(1+\theta)^{\omega}}, ~~0<\omega<1,
\end{equation*}
in \eqref{3.52} and let $\omega \rightarrow 0$  in order to deduce
\begin{equation}
\begin{aligned}
&\int_{0}^{T} \int_{\Omega}  \rho \overline{\theta}  \partial_{t}  \varphi + \rho_{\varepsilon}\rho \overline{\theta} u\cdot \nabla \varphi +  \overline{\mathcal{K}(\theta)}  \Delta \varphi\leq \\
& \int_{0}^{T} \int_{\Omega} ( \mathbb{S}:\nabla u+ \overline{\theta} \rho {\rm div}u) \varphi dx dt
- \int_{\Omega} \rho_{0} \theta_{0}   \varphi_{0} dx,
\end{aligned}
\end{equation}

Finally, we set
\begin{equation*}
\theta \equiv \mathcal{K}^{-1} (\overline{\mathcal{K}(\theta)}).
\end{equation*}
Obviously, the new function $\theta$ is non-negative, specifically,
\begin{equation*}
\theta\in L^{\alpha+1}((0,T)\times  \Omega),~~ \log(\theta) \in  L^{2}((0,T)\times  \Omega),
\end{equation*}

Therefore we obtain a variational form of the thermal energy inequality:
\begin{equation}\label{1.15}
\begin{aligned}
&\int_{0}^{T} \int_{\Omega} \rho \theta \partial_{t} \phi + \rho \theta \cdot \phi + \mathcal {K} (\theta) \Delta \phi dx dt \leq \\
&\int_{0}^{T} \int_{\Omega} (R \rho \theta - \mathbb{S}: \nabla u ) \phi dx dt - \int_{\Omega} \rho_{0} \theta_{0} dx,
\end{aligned}
\end{equation}
to be satisfied for any test function
\begin{equation}
\phi\in C^{\infty}([0,T]\times \Omega),~~~\phi \geq 0,~~~\phi(0)=1~~~\phi(T)=0,
\end{equation}

\section{Approximation of the Mellet-Vasseur type inequality}
As seen before, we can deduce the strong compactness of the density and temperature from the B-D energy estimate and entropy estimate. Note that estimates are independent of all approximation parameter. Unfortunately, the primary obstacle to prove the compactness of the solution to (1.1) is the lack of strong convergence for $\sqrt{\rho}u$ in $L^{2}$. To solve this problem, a new estimate as established in Mellet and Vasseur \cite{Mellet}, providing a $L^{\infty}(0,T; L\log L(\Omega))$ control on $\rho|u|^{2}$. This new estimate enable us to pass to the limit $r_{0}\rightarrow 0, r_{1} \rightarrow 0$ and $\kappa \rightarrow 0$.

In this section, we construct an approximation of the Mellet-Vasseur type inequality for any weak solutions to the following level of approximate system:
\begin{gather}
  \partial_{t}\rho+{\rm div}(\rho u) =0,\label{4.1} \\
  (\rho u)_{t} + {\rm div} (\rho u\otimes u) + \nabla P - 2 {\rm div}(\rho \mathbb{D} u) + r_{0}u+ r_{1} \rho |u|^{2}
=\kappa \rho \nabla (\frac{\Delta \sqrt{\rho}}{\sqrt{\rho}}), \label{4.2}\\
  \partial_{t} ( \rho \theta+ \beta \theta^{4}) + {\rm div} ( u(\rho \theta+ \beta \theta^{4})) + {\rm div} q = \rho |\nabla u|^{2}-P {\rm div} u,\label{4.3}
\end{gather}

Following the idea in \cite{Vesseur}, we define two $C^{\infty}$, nonnegative cut-off function $\phi_{m}$ and $\phi_{K}$ as follows:
\begin{equation}
\phi_{m}(\rho)=1~~~~for ~any~\rho>\frac{1}{m},~~~\phi_{m}(\rho)=0~~~~for~any~\rho<\frac{1}{2m},
\end{equation}
where $m>0$ is any real number, and $|\phi^{\prime}_{m}|\leq 2m$; and $\phi_{K}(\rho)\in C^{\infty}(\mathbb{R})$ is a nonnegative function such that
\begin{equation}
\phi_{K}(\rho)=1~~~for~any~\rho<K,~~\phi_{K}(\rho)=0~~~~for~any~\rho>2K,
\end{equation}
where $K>0$ is any real number, and $|\phi^{\prime}_{K}|\leq \frac{2}{K}$.

We define $v=\phi(\rho) u$, and $\phi(\rho)= \phi_{m}(\rho) \phi_{K}(\rho)$. The following lemma will be useful to construct the approximation of the Mellect-Vasseur type inequality. The structure of the $\kappa$ quantum term is essential to get this lemma in 3D.

\begin{lemm}
For any fixed $\kappa>0$, we have
\begin{equation}
\|\nabla v\|_{L^{2}()0,T; L^{2}(\Omega)} \leq C
\end{equation}
where the constant C depend on $\kappa>0$, $r_{1}$, $K$ and $m$; and
\begin{equation}
\rho_{t} \in L^{4}(0,T; L^{\frac{6}{5}}(\Omega)) + L^{2}(0,T; L^{\frac{3}{2}}(\Omega))~~~uniform ~~in~~\kappa.
\end{equation}

\end{lemm}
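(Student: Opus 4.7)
I would prove the two claims separately, handling (1) by direct chain rule on $v=\phi(\rho)u$ and (2) via the continuity equation.

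For claim (1), I would write
\[
\nabla v = \phi'(\rho)\,\nabla\rho\otimes u + \phi(\rho)\,\nabla u
\]
and estimate each piece in $L^2((0,T)\times\Omega)$. For the second piece, $\phi(\rho)$ is supported in $\{\rho\ge 1/(2m)\}$, so $\phi(\rho)\le 1\le \sqrt{2m}\,\sqrt{\rho}$, giving $|\phi(\rho)\nabla u|\le\sqrt{2m}\,\sqrt{\rho}|\nabla u|$; the BD estimate $\sqrt{\rho}\nabla u\in L^2((0,T)\times\Omega)$ from \eqref{3.16} closes this term with constant depending on $m$. For the first piece, on $\mathrm{supp}\,\phi'(\rho)\subset\{1/(2m)\le\rho\le 2K\}$ both $\rho$ and $\rho^{-1}$ are bounded by constants $C(m,K)$, so I rewrite $\nabla\rho=4\rho^{3/4}\nabla\rho^{1/4}$ and $u=\rho^{-1/4}(\rho^{1/4}u)$ to obtain, on the support,
\[
|\phi'(\rho)\,\nabla\rho\otimes u|\le C(m,K)\,|\nabla\rho^{1/4}|\,|\rho^{1/4}u|.
\]
Here the $\kappa$ quantum term is crucial: from the BD entropy (Lemma 3.2) we have $\kappa\int_0^T\!\!\int_{\Omega}\rho|\nabla^2\log\rho|^2\le C$, and J\"ungel's inequality \eqref{2.9} then yields $\nabla\rho^{1/4}\in L^4((0,T)\times\Omega)$ with a constant proportional to $\kappa^{-1/4}$. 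The $r_1$ drag term from the energy inequality gives $\rho^{1/4}u\in L^4((0,T)\times\Omega)$ with constant proportional to $r_1^{-1/4}$. Cauchy--Schwarz then places the product in $L^2((0,T)\times\Omega)$ with a constant depending on $\kappa$, $r_1$, $m$, $K$, as required.

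For claim (2), I would use the continuity equation \eqref{4.1} to write $\rho_t=-\mathrm{div}(\rho u)$ and decompose
\[
\nabla(\rho u)=2\,\nabla\sqrt{\rho}\otimes(\sqrt{\rho}\,u)+\sqrt{\rho}\,(\sqrt{\rho}\,\nabla u).
\]
The point is that the relevant ingredients are uniform in $\kappa$: expanding $\frac{1}{2}\rho|u+\nabla\rho/\rho|^2$ in \eqref{3.5} and absorbing the kinetic energy yields $\sup_t\|\nabla\sqrt{\rho}\|_{L^2(\Omega)}\le C$ uniformly in $\kappa$, hence $\sqrt{\rho}\in L^\infty(0,T;L^6(\Omega))$ by Sobolev embedding in 3D. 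Combined with the $r_1$-drag bound $\rho^{1/4}u\in L^4((0,T)\times\Omega)$ and H\"older's inequality, this gives $\sqrt{\rho}\,u\in L^4(0,T;L^3(\Omega))$. For the first term in the decomposition, $\nabla\sqrt{\rho}\in L^\infty(0,T;L^2(\Omega))$ paired with $\sqrt{\rho}\,u\in L^4(0,T;L^3(\Omega))$ lies in $L^4(0,T;L^{6/5}(\Omega))$. For the second, $\sqrt{\rho}\in L^\infty(0,T;L^6(\Omega))$ paired with the uniform BD bound $\sqrt{\rho}\,\nabla u\in L^2((0,T)\times\Omega)$ lies in $L^2(0,T;L^{3/2}(\Omega))$. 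All constants are independent of $\kappa$, whence $\rho_t$ lies in $L^4(0,T;L^{6/5}(\Omega))+L^2(0,T;L^{3/2}(\Omega))$ uniformly in $\kappa$.

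\textbf{Main obstacle.} The delicate step is the first-piece estimate in claim (1): the cutoff alone controls $\rho$ away from zero and infinity on the support, but the naive product $\nabla\rho\cdot u$ is not in $L^2_{t,x}$ using only the $\varepsilon$-independent estimates, because $\nabla\sqrt{\rho}\cdot\sqrt{\rho}u$ sits in $L^2(0,T;L^{3/2}(\Omega))$ rather than $L^2((0,T)\times\Omega)$. It is precisely the $L^4$ gain $\nabla\rho^{1/4}\in L^4((0,T)\times\Omega)$, produced by the $\kappa$ quantum term through J\"ungel's inequality, that matches with the $r_1$-gain $\rho^{1/4}u\in L^4$ to close the estimate in 3D; this is why the constant must depend on both $\kappa$ and $r_1$, and why the lemma breaks down when either parameter is sent to zero at this stage.
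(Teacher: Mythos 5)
Your proof is correct, and it fills a genuine gap in the paper: Lemma~4.1 is stated without proof (the authors implicitly defer to Vasseur--Yu \cite{Vesseur}), so your argument is a legitimate reconstruction from the uniform estimates of Sections~2--3. The splitting $\nabla v=\phi'(\rho)\nabla\rho\otimes u+\phi(\rho)\nabla u$ is the natural one, the bound $\phi(\rho)\le\sqrt{2m}\sqrt{\rho}$ correctly funnels the second piece into the $\kappa$-uniform estimate \eqref{3.16}, and the rewriting of the first piece as $C(m,K)\,|\nabla\rho^{1/4}|\,|\rho^{1/4}u|$ on $\{1/(2m)\le\rho\le 2K\}$ is exactly what makes the H\"older pairing close: $\nabla\rho^{1/4}\in L^4_{t,x}$ with constant $O(\kappa^{-1/4})$ from \eqref{3.14} (via J\"ungel and the $\kappa$-quantum term), and $\rho^{1/4}u\in L^4_{t,x}$ with constant $O(r_1^{-1/4})$ from \eqref{3.10}. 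Your ``main obstacle'' paragraph correctly identifies why the constant must degenerate as $\kappa\to0$ or $r_1\to0$, which is why this lemma can only be used at the $\kappa,r_1>0$ stage. The decomposition for claim~(2) and the $L^\infty_t L^2_x \times L^4_t L^3_x \to L^4_t L^{6/5}_x$ and $L^\infty_t L^6_x \times L^2_{t,x} \to L^2_t L^{3/2}_x$ H\"older exponents are all correct and match the paper's own \eqref{3.20}.

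Two very minor points worth making explicit. First, to pass from $\nabla\sqrt{\rho}\in L^\infty_t L^2_x$ to $\sqrt{\rho}\in L^\infty_t L^6_x$ you invoke Sobolev on the torus, which requires a bound on $\|\sqrt{\rho}\|_{L^2(\Omega)}$ as well; this is supplied by mass conservation $\|\sqrt{\rho}\|_{L^2}^2=\int_\Omega\rho=M$, and you should say so. Second, to extract the $\kappa$-uniform bound on $\nabla\sqrt{\rho}$ from \eqref{3.5}, the cleanest route is
\begin{equation*}
4\int_\Omega|\nabla\sqrt{\rho}|^2\,dx
=\int_\Omega\bigl|\,\bigl(\sqrt{\rho}\,u+2\nabla\sqrt{\rho}\bigr)-\sqrt{\rho}\,u\,\bigr|^2dx
\le 2\int_\Omega\rho\Bigl|u+\tfrac{\nabla\rho}{\rho}\Bigr|^2dx+2\int_\Omega\rho|u|^2\,dx,
\end{equation*}
both pieces being controlled by the BD entropy and the energy inequality with constants independent of $\kappa$; your phrasing ``absorbing the kinetic energy'' gestures at this but it is worth writing out. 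Neither point affects correctness.
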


We introduce a new nonnegative cut-off function $\varphi_{n}$ which is in $C^{1}(R^{3})$:
\begin{equation}
\varphi_{n}(u) = \tilde{\varphi}_{n}(|u|^{2}),
\end{equation}
where $\tilde{\varphi}_{n}$ is given on $R^{+}$ by
$$ \varphi_{n}^{\prime \prime}(y)=\left\{
\begin{aligned}
&= \frac{1}{1+y}~~~~~~~~~~~~if~~0\leq y\leq n,\\
&= -\frac{1}{1+y}~~~~~~~~~~~~if~~n\leq y\leq C_{n},\\
&=0~~~~~~~~~~~~~~~~if ~~y\geq C_{n},
\end{aligned}
\right.
$$
with $\varphi_{n}^{\prime}(0)=0, \varphi_{n}(0)=0$, and $C_{n}= e(1+n)^{2}-1$.

Here we gather the properties of the function $\varphi_{n}^{\prime}$ in the following lemma:
\begin{lemm}
Let $\varphi_{n}$ and $\tilde{\varphi}_{n}$ be defined as above. Then they verify

$\bullet$  (a) For any $u\in \mathbb{R}^{3}$, we have
\begin{equation}
\varphi^{\prime \prime}_{n}(u) = 2(2\tilde{\varphi}^{\prime \prime}_{n}(|u|^{2}) u \otimes u + I
\tilde{\varphi}^{\prime}_{n}(|u|^{2})),
\end{equation}
where $I$ is $3\times 3$ identity matrix.

$\bullet$ (b) $\varphi^{\prime \prime}_{n}(y)\leq \frac{1}{1+y}$ for any $n>0$ and $y\geq 0$.

$\bullet$ (c)
$$ \varphi_{n}^{\prime}(y)\left\{
\begin{aligned}
&= 1+ \ln(1+y)~~~~~~~~~~~~if~~0\leq y\leq n,\\
&= 0~~~~~~~~~~~~~~~~~~~~~~~~~~~~~~~~~~~~~~if ~~y\geq C_{n},\\
&\geq 0,~~and~~\leq 1+\ln(1+y)~~~~if~~n\leq y\leq C_{n},
\end{aligned}
\right.
$$
In one word, $0 \leq \varphi_{n}^{\prime}\leq 1+ \ln(1+y)$ for any $y\geq 0$, and it is compactly supported.

$\bullet$ (d)
For any given $n>0$, we have
\begin{equation}
|\varphi_{n}^{\prime \prime}(u)| \leq 6 + 2 \ln (1+n)
\end{equation}
for any $u \in \mathbb{R}^{3}$.

$\bullet$ (e)
$$ \tilde{\varphi}_{n}(y)\left\{
\begin{aligned}
&= (1+y) \ln(1+y)~~~~~~~~~~~~if~~0\leq y\leq n,\\
&= 2(1+ \ln(1+n))y- (1+y) \ln(1+y) + 2(\ln(1+n)-n) ~~~~if~~n\leq y\leq C_{n},\\
&= e(1+n)^{2}-2n-2~~~~~~~~~~~~~~~~~~~~~~~~~~if ~~y\geq C_{n},
\end{aligned}
\right.
$$
$\tilde{\varphi}_{n}(y)$ is a nondecreasing function with respect to y for any fixed n, and it is a nondecreasing function with respect to n for any fixed y, and
\begin{equation}
\tilde{\varphi}_{n}(y)\rightarrow (1+y) \ln (1+y) ~~~a.e.
\end{equation}
as $n\rightarrow \infty$.
\end{lemm}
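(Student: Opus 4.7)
The proof is essentially a calculus exercise: part (a) comes from the chain rule, part (b) is inspection of the piecewise definition, and parts (c)--(e) are successive antiderivatives of $\tilde{\varphi}_n''$. I will take them in the order (a), (b), (c), (d), (e), because (d) depends on (b) and (c), and (e) depends on (c).

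For (a), I compute $\nabla\varphi_n(u) = 2u\,\tilde{\varphi}_n'(|u|^2)$ and then differentiate once more to get $\partial_i\partial_j\varphi_n(u) = 2\delta_{ij}\tilde{\varphi}_n'(|u|^2) + 4 u_i u_j\tilde{\varphi}_n''(|u|^2)$, which rearranges to the stated Hessian formula $2(2\tilde{\varphi}_n''(|u|^2)u\otimes u + I\tilde{\varphi}_n'(|u|^2))$. For (b), I note that $\tilde{\varphi}_n'' = 1/(1+y)$ on $[0,n]$ (equality), $\tilde{\varphi}_n'' = -1/(1+y) \le 0 \le 1/(1+y)$ on $[n,C_n]$, and $\tilde{\varphi}_n'' = 0 \le 1/(1+y)$ on $[C_n,\infty)$, so the pointwise bound holds everywhere.

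For (c), I integrate $\tilde{\varphi}_n''$ starting from the value $\tilde{\varphi}_n'(0)=1$ (the constant of integration forced by consistency with the stated formula and with $\varphi_n'(0)=0$ for $\varphi_n$). On $[0,n]$ this gives $\tilde{\varphi}_n'(y)=1+\ln(1+y)$; on $[n,C_n]$, matching continuity at $y=n$ and integrating $-1/(1+y)$ yields $\tilde{\varphi}_n'(y)=1+2\ln(1+n)-\ln(1+y)$, which vanishes exactly at $y=C_n=e(1+n)^2-1$; on $[C_n,\infty)$, $\tilde{\varphi}_n'\equiv 0$. Nonnegativity on $[n,C_n]$ follows from $\ln(1+y)\le 1+2\ln(1+n)$ for $y\le C_n$, and the global bound $0\le\tilde{\varphi}_n'(y)\le 1+\ln(1+y)$ is immediate on each piece. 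For (d), combining (a), (b), (c) gives $|\varphi_n''(u)|\le 2\tilde{\varphi}_n'(|u|^2)+4|u|^2|\tilde{\varphi}_n''(|u|^2)|\le 2(1+\ln(1+n))+4|u|^2/(1+|u|^2)\le 6+2\ln(1+n)$.

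For (e), I integrate $\tilde{\varphi}_n'$ from $0$ using $\tilde{\varphi}_n(0)=0$. On $[0,n]$, $\int_0^y(1+\ln(1+s))\,ds = (1+y)\ln(1+y)$. On $[n,C_n]$, I integrate $1+2\ln(1+n)-\ln(1+s)$ from $n$ and add the value $(1+n)\ln(1+n)$ at $y=n$; after simplification this gives $2(1+\ln(1+n))y-(1+y)\ln(1+y)+2(\ln(1+n)-n)$, and evaluating at $y=C_n$ produces the stated constant $e(1+n)^2-2n-2$ on $[C_n,\infty)$. Monotonicity in $y$ follows from $\tilde{\varphi}_n'\ge 0$ by (c). For monotonicity in $n$, I compute $\partial_n \tilde{\varphi}_n(y)$ piecewise: it equals $0$ for $y\le n$, $2(y-n)/(1+n)\ge 0$ on $[n,C_n]$, and $2(e(1+n)-1)\ge 0$ on $[C_n,\infty)$, so the map $n\mapsto\tilde{\varphi}_n(y)$ is nondecreasing. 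Finally, for fixed $y$ and $n>y$ we are in the first regime, so $\tilde{\varphi}_n(y)=(1+y)\ln(1+y)$, giving the pointwise limit. Overall there is no real obstacle; the only bookkeeping step requiring care is verifying matching at the transition points $y=n$ and $y=C_n$ in (c) and (e), and the sign analysis for monotonicity in $n$.
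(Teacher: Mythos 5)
Your proof is correct and is the natural direct-computation verification; the paper itself states this lemma without proof (the construction is inherited from the cited Vasseur--Yu papers), so there is no distinct paper argument to compare against. Two bookkeeping points deserve to be made explicit.

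First, the constant of integration. As you correctly observe, the paper's stated side condition $\varphi_n'(0)=0$ is vacuous: since $\nabla\varphi_n(u)=2u\,\tilde{\varphi}_n'(|u|^2)$, the gradient vanishes at $u=0$ for every choice of $\tilde{\varphi}_n'(0)$, so this condition does not pin down the integration constant. The value $\tilde{\varphi}_n'(0)=1$ has to be read off from the intended formula in part (c) (equivalently, the defining condition should really be $\tilde{\varphi}_n(y)=(1+y)\ln(1+y)$ on $[0,n]$). You flag this honestly, and with that reading everything downstream is consistent; I would only add that this should be stated as a correction of the paper's definition rather than phrased as if it followed from $\varphi_n'(0)=0$.

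Second, part (d). You pass from $2\tilde{\varphi}_n'(|u|^2)$ to $2(1+\ln(1+n))$ in one step, but the inequality recorded in part (c), namely $\tilde{\varphi}_n'(y)\le 1+\ln(1+y)$, does not give this: on $[n,C_n]$ it only yields $\tilde{\varphi}_n'\le 1+\ln(1+C_n)=2+2\ln(1+n)$, which would lead to the weaker constant $8+4\ln(1+n)$. What is actually used is the sharper fact $\sup_{y\ge 0}\tilde{\varphi}_n'(y)=\tilde{\varphi}_n'(n)=1+\ln(1+n)$, which follows because $\tilde{\varphi}_n'$ is increasing on $[0,n]$, decreasing on $[n,C_n]$, and zero afterwards. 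That one-line observation should be written out; once it is, the chain $|\varphi_n''(u)|\le 4|u|^2\,|\tilde{\varphi}_n''(|u|^2)| + 2\tilde{\varphi}_n'(|u|^2)\le 4+2(1+\ln(1+n))=6+2\ln(1+n)$ is complete. The remaining parts (a), (b), (c), (e), including the matching at $y=n$ and $y=C_n$ and the sign analysis for monotonicity in $n$, check out.
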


By the molifier method, we can  construct the approximation of the Mellet-Vasseur type inequality which  is  shown in the following lemma:

\begin{lemm}
For any weak solution to \eqref{4.1}-\eqref{4.3}, and any $\psi\in \mathfrak{D}(-1,+\infty)$, we have
\begin{equation}\label{4.12}
\begin{aligned}
&-\int_{0}^{T} \int_{\Omega} \psi_{t} \rho \varphi_{n}(v)
 dx dt + \int_{0}^{T} \int_{\Omega} \psi(t) \varphi^{\prime}_{n}(v) F dx dt\\
& + \int_{0}^{T} \int_{\Omega} \psi(t) \mathbb{S}: \nabla(\varphi^{\prime}_{n}(v)) dx dt\\
& = \int_{\Omega} \rho_{0} \varphi_{n}(v_{0}) \psi(0)dx,
\end{aligned}
\end{equation}
where
\begin{equation}
\mathbb{S} = \rho \varphi(\phi) (\mathbb{D} u + \kappa \frac{\Delta \sqrt{\rho}}{\sqrt{\rho}}\mathbb{I}), ~~~~and
\end{equation}
\begin{equation}
F= \rho^{2} u \phi_{K}^{\prime} (\rho) {\rm div} u + \nabla P \phi_{K}(\rho) + r_{0} u \phi_{K}(\rho)+ r_{1} \rho |u|^{2} u \phi_{K}(\rho)+ \kappa \sqrt{\rho} \nabla \phi_{K}(\rho) \Delta \sqrt{\rho} + 2\kappa \phi_{K}(\rho) \nabla \sqrt{\rho} \Delta \sqrt{\rho}),
\end{equation}
where $\mathbb{I}$ is an identical matrix.
\end{lemm}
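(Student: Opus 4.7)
The plan is to derive \eqref{4.12} by first producing a transport equation for $v=\phi(\rho)u$, then testing a spatially mollified form of it against $\psi(t)\varphi'_{n}(v)$, and finally letting the mollification parameter vanish. Because weak solutions have only limited regularity in time, and $\varphi_{n}$ grows only logarithmically in $|u|^{2}$, this chain of operations is not justified directly on the weak formulation. The compactness of the support of $\varphi'_{n}$ (Lemma 4.2(c),(d)) combined with the bounds in Lemma 4.1 is what makes the procedure work.

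Step 1 (transport equation for $v$). Multiplying \eqref{4.2} by $\phi_{K}(\rho)$ and using \eqref{4.1} to commute $\phi_{K}(\rho)$ with $\partial_{t}$ and with the convective derivative, one finds, at least formally,
\begin{equation*}
\rho\,\partial_{t}(\phi_{K}u)+\rho u\cdot\nabla(\phi_{K}u)=-F+2\,\mathrm{div}(\rho\phi_{K}\mathbb{D}u)+\kappa\rho\phi_{K}\nabla\!\left(\tfrac{\Delta\sqrt{\rho}}{\sqrt{\rho}}\right)+(\text{commutators with }\phi_{K}),
\end{equation*}
where $F$ is precisely the quantity appearing in the statement and the trailing commutators cancel the extra terms in $F$ involving $\phi'_{K}(\rho)$ and $\nabla\phi_{K}(\rho)$. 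Multiplying further by $\phi_{m}(\rho)$ and simplifying with \eqref{4.1} produces $\rho\,\partial_{t}v+\rho u\cdot\nabla v$ on the left-hand side and rewrites the viscous plus quantum contribution as $\mathrm{div}\,\mathbb{S}$ with $\mathbb{S}=\rho\phi(\rho)(\mathbb{D}u+\kappa\tfrac{\Delta\sqrt{\rho}}{\sqrt{\rho}}\mathbb{I})$.

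Step 2 (mollification and testing). Since $v=\phi(\rho)u$ is supported in $\{1/(2m)\le\rho\le 2K\}$, Lemma 4.1 gives $\nabla v\in L^{2}_{t,x}$ and $\partial_{t}\rho\in L^{4}_{t}L^{6/5}_{x}+L^{2}_{t}L^{3/2}_{x}$, so a standard DiPerna--Lions argument yields a spatial mollification $v_{\varepsilon}$ satisfying the mollified transport equation up to commutator errors $r_{\varepsilon}\to 0$ in $L^{1}_{t,x}$. Testing the mollified equation against $\psi(t)\varphi'_{n}(v_{\varepsilon})$, using the chain rule $\rho\,\partial_{t}v\cdot\varphi'_{n}(v)=\partial_{t}(\rho\varphi_{n}(v))-\varphi_{n}(v)\partial_{t}\rho$, and then employing \eqref{4.1} to absorb the term $\rho u\cdot\nabla v\cdot\varphi'_{n}(v)=\mathrm{div}(\rho u\,\varphi_{n}(v))-\varphi_{n}(v)\,\mathrm{div}(\rho u)$ transforms the identity into
\begin{equation*}
-\int_{0}^{T}\!\!\int_{\Omega}\psi_{t}\rho_{\varepsilon}\varphi_{n}(v_{\varepsilon})+\int_{0}^{T}\!\!\int_{\Omega}\psi\,\varphi'_{n}(v_{\varepsilon})F_{\varepsilon}+\int_{0}^{T}\!\!\int_{\Omega}\psi\,\mathbb{S}_{\varepsilon}\!:\!\nabla\varphi'_{n}(v_{\varepsilon})=\int_{\Omega}\rho_{0,\varepsilon}\varphi_{n}(v_{0,\varepsilon})\psi(0)+o(1).
\end{equation*}

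Step 3 (limit $\varepsilon\to 0$). Since $\varphi'_{n}$ is globally Lipschitz and bounded by $1+\ln(1+C_{n})$ (Lemma 4.2(d)) and is compactly supported in $v$, the composition $\varphi'_{n}(v_{\varepsilon})$ converges strongly in every $L^{p}_{t,x}$, and so does $\varphi_{n}(v_{\varepsilon})$. The energy bounds of Section 3 provide strong convergence of $\rho_{\varepsilon}$, $\sqrt{\rho}_{\varepsilon}$ and $\Delta\sqrt{\rho}_{\varepsilon}$ (when multiplied by the density cut-off $\phi_{K}$) and weak convergence of $\mathbb{D}u_{\varepsilon}$, which together allow passage to the limit in the three non-trivial integrals by a product-of-weak-and-strong argument. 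The commutator errors and the initial-data mollification errors vanish by the standard continuity properties of the mollifier.

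The main obstacle will be controlling the viscous--quantum contribution $\psi\,\mathbb{S}_{\varepsilon}:\nabla\varphi'_{n}(v_{\varepsilon})$ in the limit, because $\nabla v_{\varepsilon}$ converges only weakly in $L^{2}$ while $\rho\phi(\rho)\mathbb{D}u$ and $\sqrt{\rho}\phi(\rho)\Delta\sqrt{\rho}$ are only weakly integrable. The cut-off $\phi=\phi_{m}\phi_{K}$ is essential here: it confines the integrand to $\{1/(2m)\le\rho\le 2K\}$, on which $\rho\mathbb{D}u$ is a bounded multiple of the $L^{2}$ quantity $\sqrt{\rho}\mathbb{D}u$ and, by Lemma 4.1, $\sqrt{\rho}_{\varepsilon}$ is compact in $H^{1}$. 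This upgrades the convergence of $\mathbb{S}_{\varepsilon}$ to strong convergence in $L^{2}_{t,x}$ against the (weakly convergent) $\nabla\varphi'_{n}(v_{\varepsilon})$, closing the argument.
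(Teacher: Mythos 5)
The paper itself gives no proof of this lemma: it simply asserts the result ``by the mollifier method,'' following Vasseur--Yu. Your three-step outline---deriving a transport equation for $v=\phi(\rho)u$, spatially mollifying and testing against $\psi(t)\varphi_n'(v_\varepsilon)$, and then removing the mollification---is exactly the Vasseur--Yu argument the authors are invoking, and it rests on the right inputs (the bound $\nabla v\in L^2_{t,x}$ and the $\partial_t\rho$ regularity from Lemma 4.1, plus the boundedness and compact support of $\varphi_n'$ from Lemma 4.2).

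One point in your Step 3 is misstated. You claim that $\nabla v_\varepsilon$ ``converges only weakly in $L^2$'' and that the cut-off is needed to ``upgrade'' the convergence of $\mathbb{S}_\varepsilon$ to strong. But $v_\varepsilon$ here is the spatial mollification of a \emph{fixed} function $v$ with $\nabla v\in L^2$, so $\nabla v_\varepsilon=(\nabla v)*\eta_\varepsilon\to\nabla v$ \emph{strongly} in $L^2$; likewise every mollified factor converges strongly in its natural $L^p$. There is no weak--strong pairing issue at this stage. What the cut-off actually buys is integrability of $\mathbb{S}$ and $F$ in the first place: on $\{1/(2m)\le\rho\le 2K\}$ one has $\rho\phi(\rho)\,\mathbb{D}u = \sqrt{\rho}\phi(\rho)\cdot\sqrt{\rho}\mathbb{D}u\in L^2$ and $\sqrt{\rho}\phi(\rho)\Delta\sqrt{\rho}\in L^2$, so $\mathbb{S}\in L^2_{t,x}$ and the product $\mathbb{S}_\varepsilon:\nabla\varphi_n'(v_\varepsilon)$ passes to the limit by a routine strong$\times$strong argument. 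Once this is corrected the outline closes without difficulty; the delicate weak-convergence issues you gesture at belong to the later sections where $\kappa,r_0,r_1\to0$, not to the $\varepsilon\to0$ step here.

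Two smaller remarks. First, in Step 1 it is cleaner to multiply the velocity form of \eqref{4.2} by $\phi(\rho)=\phi_m(\rho)\phi_K(\rho)$ in one stroke and use $\eqref{4.1}$ to write $\rho\phi(\rho)(\partial_tu+u\cdot\nabla u)=\rho(\partial_tv+u\cdot\nabla v)+\rho^2\phi'(\rho)(\operatorname{div}u)u$; this makes it transparent which commutator terms contribute $\phi_m'$- versus $\phi_K'$-pieces to $F$ (the paper's stated $F$ in fact omits the $\phi_m'$-terms, which only disappear after the $m\to\infty$ limit in Section 5). Second, the DiPerna--Lions commutator argument is spatial; since the weak formulation only controls $\partial_t(\rho u)$ distributionally, one must either regularize in time as well or work directly with the weak formulation when producing the $\partial_t(\rho\varphi_n(v_\varepsilon))$ term---your outline should acknowledge this rather than treat the mollified equation as pointwise in $t$.
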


\section{Recover the limits as $m\rightarrow \infty$ and $K\rightarrow \infty $}
In this section, we want to recover the limits from \eqref{4.12} as $m\rightarrow \infty$ and $K\rightarrow \infty $. Firstly, we will pass to the limit $m\rightarrow 0$. For the $K\rightarrow \infty$case, it is similar to the $m\rightarrow 0$ process.  For any fixed weak solution $(\rho, u)$, $\phi_{m}(\rho)$ converges to 1 almost everywhere for $(t,x)$, and it is uniform bounded in $L^{\infty}(0,T; \Omega)$, and
\begin{equation}
r_{0} \phi_{K}(\rho) u \in L^{2}(0,T; L^{2}(\Omega)).
\end{equation}
Thus, we find
\begin{equation}
v_{m}=\phi_{m} \phi_{K}(\rho) u \in L^{2}(0,T; L^{2}(\Omega))\rightarrow \phi_{K} u ~~~almost ~~~everywhere ~~~for ~~~(t,x).
\end{equation}
as $m\rightarrow \infty$. The dominated convergence theorem allows us to have
\begin{equation}
v_{m} \rightarrow  \phi_{K} u ~~~in ~~~L^{2}(0,T; L^{2}(\Omega))
\end{equation}
as $m\rightarrow \infty$, and hence
\begin{equation}
\varphi_{n}(v_{m}) \rightarrow \varphi_{n}(\phi_{K} u)~~~~~in~~~L^{p}((0,T)\times \Omega)
\end{equation}
for any $1\leq p < \infty$. Thus, we can show that
\begin{equation}
\int_{0}^{T} \int_{\Omega} \psi^{\prime}(t) (\rho \varphi_{n}(v_{m})) dx dt \rightarrow
\int_{0}^{T} \int_{\Omega} \psi^{\prime}(t) (\rho \varphi_{n}(\phi_{K} u)) dx dt \rightarrow,
\end{equation}
and
\begin{equation}
\int_{0}^{T} \int_{\Omega} \rho \varphi_{n}(v_{m0})) dx dt \rightarrow
\int_{0}^{T} \int_{\Omega} \rho_{0} \varphi_{n}(\phi_{K}(\rho_{0}) u_{0})) dx dt \rightarrow,
\end{equation}
as $m\rightarrow \infty$.

Meanwhile, for any fixed $\rho$, we have
\begin{equation}
\phi^{\prime}_{m}(\rho) \rightarrow 0~~~~almost~~~everywhere~~for~~(t,x)
\end{equation}
as $m\rightarrow \infty$.

Calculating $|\phi^{\prime}_{m}(\rho)| \leq 2m$ as $\frac{1}{2m} \leq \rho \leq \frac{1}{m}$, and otherwise, $\phi^{\prime}_{m}(\rho)=0$, thus
\begin{equation}
|\rho \phi^{\prime}_{m}(\rho)| \leq 1~~~for~~all~~\rho.
\end{equation}

To pass into the limits in \eqref{4.12} as $m\rightarrow \infty$, we rely on the following Lemma:
\begin{lemm}
If
\begin{equation}
\begin{aligned}
&\| a_{m} \|_{L^{\infty}(0,T); \Omega} \leq C,~~~~~a_{m} \rightarrow a~~~~a.e.~~for ~~(t,x) \\
&~~~and~~~in~~L^{p}((0,T)\times \Omega)~~~~for~~any~~1\leq p <\infty.
\end{aligned}
\end{equation}
$f\in L^{1}((0,T)\times \Omega)$, then we have
\begin{equation}
\int_{0}^{T} \int_{\Omega} \phi_{m}(\rho) a_{m} f dx dt \rightarrow  \int_{0}^{T} \int_{\Omega} a f dx dt ~~~~~as~~~~m\rightarrow infty.
\end{equation}
\end{lemm}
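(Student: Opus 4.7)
The plan is to reduce the statement to a straightforward application of the Lebesgue dominated convergence theorem, once the pointwise behavior of $\phi_m(\rho)$ is pinned down and an integrable dominating function is produced.

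First I would record the a.e.~convergence of the cutoff. By construction $\phi_m(\rho)=1$ whenever $\rho>1/m$ and $\phi_m(\rho)=0$ whenever $\rho<1/(2m)$, with $0\leq\phi_m\leq 1$ (this follows from the cutoff definition together with the slope bound $|\phi_m'|\leq 2m$). Consequently, for every $(t,x)$ with $\rho(t,x)>0$ there is an index beyond which $\rho(t,x)>1/m$, so $\phi_m(\rho(t,x))\to 1$; on the vacuum set $\{\rho=0\}$ one has $\phi_m(\rho)\equiv 0$. Hence $\phi_m(\rho)\to \mathbf{1}_{\{\rho>0\}}$ a.e.\ in $(0,T)\times\Omega$.

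Next I would combine this with the hypothesis $a_m\to a$ a.e.\ to obtain $\phi_m(\rho)\,a_m\to a\,\mathbf{1}_{\{\rho>0\}}$ a.e.\ Using $\|a_m\|_{L^\infty}\leq C$ and $0\leq \phi_m\leq 1$, the sequence of integrands satisfies
\begin{equation*}
|\phi_m(\rho)\,a_m\,f|\leq C\,|f|\in L^1((0,T)\times\Omega).
\end{equation*}
Dominated convergence then gives
\begin{equation*}
\int_0^T\!\!\int_\Omega \phi_m(\rho)\,a_m\,f\,dx\,dt\;\longrightarrow\;\int_0^T\!\!\int_\Omega a\,\mathbf{1}_{\{\rho>0\}}\,f\,dx\,dt.
\end{equation*}
The $L^p$ convergence hypothesis on $a_m$ is not needed for this step but can be used as an alternative route (multiplying and passing in $L^p$–$L^{p'}$ duality after reducing $f$ to a bounded truncation, then removing the truncation by monotone convergence).

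The only substantive point — what I would flag as the main (mild) obstacle — is the identification of the limit with $\int a f$ rather than with $\int a\,\mathbf{1}_{\{\rho>0\}}f$. In the intended applications of this lemma (the passage $m\to\infty$ in the approximate Mellet–Vasseur inequality \eqref{4.12}), the function $f$ carries a factor that vanishes on the vacuum set $\{\rho=0\}$ (for instance, a power of $\rho$ arising from the structure of the momentum equation and the cutoff $\phi_K(\rho)$). Under this compatibility, $a\,\mathbf{1}_{\{\rho>0\}}f=af$ a.e.\ and the desired conclusion follows. I would therefore state the lemma either with the implicit assumption $f=0$ a.e.\ on $\{\rho=0\}$, or by formulating the limit as $\int a\,\mathbf{1}_{\{\rho>0\}}f$ and verifying the vanishing on vacuum separately in each use.
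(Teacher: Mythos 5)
The paper states this lemma without giving a proof (it is imported from the Vasseur--Yu framework), so there is no in-text argument to compare against; your dominated-convergence proof is the natural one and is correct. The genuinely useful content of your write-up is the observation that $\phi_m(\rho)\to \mathbf{1}_{\{\rho>0\}}$ a.e.\ rather than to $1$, so that dominated convergence delivers $\int a\,\mathbf{1}_{\{\rho>0\}}f$ and \emph{not} $\int af$. As stated, the lemma therefore carries a tacit extra hypothesis, namely $f=0$ a.e.\ on the vacuum set $\{\rho=0\}$. You are right that this is satisfied in every application in Section~5: the functions $f_1$, $f_2$, $F_{m1}$, $F_{m2}$ that play the role of $f$ all carry explicit factors of $\rho$, $\sqrt{\rho}$, or $\nabla\sqrt{\rho}$ (and $\nabla\sqrt{\rho}$ vanishes a.e.\ on $\{\rho=0\}$ since $\sqrt{\rho}\in W^{1,2}$), while $a_m$ is one of $\phi_m(\rho)\varphi_n''(v_m)$ or $\varphi_n''(v_m)v_m$, both uniformly bounded by Lemma~4.2. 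So your proof both supplies the missing argument and surfaces a hypothesis the authors left implicit, which is worth making explicit either in the lemma statement (as $f\,\mathbf{1}_{\{\rho=0\}}=0$ a.e.) or by writing the limit as $\int a\,\mathbf{1}_{\{\rho>0\}}f$ and checking vacuum vanishing at each use. Your closing remark is also correct: on a finite measure space, a.e.\ convergence together with the uniform $L^\infty$ bound already yields convergence in every $L^p$ with $p<\infty$, so the separate $L^p$-convergence hypothesis is redundant.
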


Calculating
\begin{equation}\label{5.11}
\begin{aligned}
&\int_{0}^{T} \int_{\Omega} \psi(t) \mathbb{S}_{m} : \nabla (\varphi^{\prime}(v_{m})) dx dt\\
&\int_{0}^{T} \int_{\Omega} \psi(t) \mathbb{S}_{m}\varphi^{\prime\prime}(v_{m})(\nabla \phi_{m} \phi_{K} u +
\phi_{m} \nabla \phi_{K} u + \phi_{m}\phi_{K}+ \phi_{m}\phi_{K} \nabla u) dx dt\\
&\int_{0}^{T} \int_{\Omega} \phi_{m} a_{m1} f_{1} dx dt+
\int_{0}^{T} \int_{\Omega} \phi_{m} a_{m2} f_{2} dx dt,
\end{aligned}
\end{equation}
where
where
\begin{equation*}
a_{m1}= \phi_{m}(\rho)\varphi^{\prime \prime}(v_{m})
\end{equation*}
\begin{equation*}
f_{1}= \psi(t) \rho \phi_{K}(\rho)(\mathbb{D} u +\kappa\frac{\Delta \sqrt{\rho}}{\sqrt{\rho}} \mathbb{B})(u \nabla \phi_{K}+
\phi_{K}(\rho) \nabla u),
\end{equation*}
and
\begin{equation*}
a_{m2}= \phi_{m}(\rho)\phi_{\rho} u \varphi^{\prime \prime}(v_{m})= \varphi^{\prime \prime}(v_{m}) v_{m},
\end{equation*}
\begin{equation*}
\begin{aligned}
&f_{2}= \psi(t)  \phi_{K}(\rho)(\mathbb{D} u +\kappa\frac{\Delta \sqrt{\rho}}{\sqrt{\rho}} \mathbb{I})\nabla \rho \\
& = 2 \psi(t) \phi_{K} (\rho) (\kappa \Delta \rho \nabla \sqrt{\rho} + \sqrt{\rho} \mathbb{D} u \nabla \sqrt{\rho} ).
\end{aligned}
\end{equation*}
So applying Lemma 3.1 to \eqref{5.11}, one obtains
\begin{equation}
\int_{0}^{T} \int_{\Omega} \psi(t) \mathbb{S}_{m} : \nabla (\varphi^{\prime}(v_{m})) dx dt \rightarrow
\int_{0}^{T} \int_{\Omega} \psi(t) \mathbb{S}_{m} : \nabla (\varphi^{\prime}(\phi_{K}(\rho) u)) dx dt
\end{equation}
as $m \rightarrow \infty$, where $\mathbb{S}= \phi_{K}(\rho)(\mathbb{D} u +\kappa\frac{\Delta \sqrt{\rho}}{\sqrt{\rho}} \mathbb{I})$.

Letting $F_{m}= F_{m1}+ F_{m2}$, where
\begin{equation*}
\begin{aligned}
&F_{m1}= \rho^{2} u \phi^{\prime}(\rho) {\rm div} u + \rho \nabla \phi(\rho) \mathbb{D} u + \kappa \sqrt{\rho} \nabla \phi(\rho) \Delta \sqrt{\rho}\\
&= \rho (\phi^{\prime}_{m}(\rho) \phi_{K}(\rho) + \phi_{m}(\rho) \phi^{\prime}_{K}(\rho)) (\rho u {\rm div} u + \nabla \rho \cdot \mathbb{D}u + \kappa\nabla \rho \frac{\Delta \sqrt{\rho}}{\sqrt{\rho}}),
\end{aligned}
\end{equation*}
where
\begin{equation}
\phi_{K}(\rho)(\rho u {\rm div} u + \nabla \rho \cdot \mathbb{D}u + \kappa\nabla \rho \frac{\Delta \sqrt{\rho}}{\sqrt{\rho}}) \in L^{1}((0,T)\times \Omega),
\end{equation}
and
\begin{equation}
\rho \phi^{\prime}_{K}(\rho)(\rho u {\rm div} u + \nabla \rho \cdot \mathbb{D}u + \kappa\nabla \rho \frac{\Delta \sqrt{\rho}}{\sqrt{\rho}}) \in L^{1}((0,T)\times \Omega),
\end{equation}
and
\begin{equation}
F_{m2}= \phi_{m}(\rho) \phi_{K}(\rho) (2\rho^{\frac{\gamma}{2}} \nabla \rho^{\frac{\gamma}{2}} + r_{0} u + r_{1} \rho |u|^{2} u + 2\kappa \nabla \sqrt{\rho} \Delta \sqrt{\rho}),
\end{equation}
where
\begin{equation}
\phi_{K}(\rho) (2\rho^{\frac{\gamma}{2}} \nabla \rho^{\frac{\gamma}{2}} + r_{0} u + r_{1} \rho |u|^{2} u + 2\kappa \nabla \sqrt{\rho} \Delta \sqrt{\rho}) \in L^{1}((0,T)\times \Omega).
\end{equation}
 Using Lemma 3.1, we obtain
 \begin{equation}
\int_{0}^{T} \int_{\Omega} \psi(t) \varphi^{\prime}(v_{m}) F_{m} dx dt \rightarrow \int_{0}^{T} \int_{\Omega} \psi(t) \varphi^{\prime}(\phi_{K}(\rho) u) F_{m} dx dt,
\end{equation}
where
\begin{equation}
F= \rho^{2} u \phi_{K}^{\prime} (\rho) {\rm div} u + 2\rho^{\frac{\gamma}{2}} \nabla \rho^{\frac{\gamma}{2}} \phi_{K}(\rho) + r_{0} u \phi_{K}(\rho)+ r_{1} \rho |u|^{2} u \phi_{K}(\rho)+ \kappa \sqrt{\rho} \nabla \phi_{K}(\rho) \Delta \sqrt{\rho} + 2\kappa \phi_{K}(\rho) \nabla \sqrt{\rho} \Delta \sqrt{\rho}),
\end{equation}
Thus, letting $m\rightarrow \infty$ in \eqref{4.12}, and using the above convergence in this section, we find
\begin{equation}
\begin{aligned}
&- \int_{0}^{T} \int_{\Omega} \psi_{t} \rho \varphi_{n}(\phi_{K}(\rho) u) dx dt + \int_{0}^{T} \int_{\Omega} \psi_{\tau}(t) \varphi^{\prime}_{n}(\phi_{K}(\rho) u) F dx dt\\
& + \int_{0}^{T} \int_{\Omega} \psi_{\tau}(t) \mathbb{S}: \nabla (\varphi^{\prime}_{n}(\phi_{K}(\rho) u)) dx dt= \int_{\Omega} \rho_{0} \varphi_{n}(\phi_{K}(\rho_{0}) u_{0})dx dt,
\end{aligned}
\end{equation}
which in turn gives us the following lemma:
\begin{lemm}
For any weak solution to \eqref{4.1}-\eqref{4.3}, we have
\begin{equation}
\begin{aligned}
&- \int_{0}^{T} \int_{\Omega} \psi_{t} \rho \varphi_{n}(\phi_{K}(\rho) u) dx dt + \int_{0}^{T} \int_{\Omega} \psi_{\tau}(t) \varphi^{\prime}_{n}(\phi_{K}(\rho) u) F dx dt\\
& + \int_{0}^{T} \int_{\Omega} \psi_{\tau}(t) \mathbb{S}: \nabla (\varphi^{\prime}_{n}(\phi_{K}(\rho) u)) dx dt= \int_{\Omega} \rho_{0} \varphi_{n}(\phi_{K}(\rho_{0}) u_{0})dx dt,
\end{aligned}
\end{equation}
where $\mathbb{S}= \phi_{K}(\rho) \rho (\mathbb{D} u + \kappa \frac{\Delta \sqrt{\rho}}{\sqrt{\rho}} \mathbb{I}) $, and
\begin{equation}
F= \rho^{2} u \phi_{K}^{\prime} (\rho) {\rm div} u + \nabla P \phi_{K}(\rho) + r_{0} u \phi_{K}(\rho)+ r_{1} \rho |u|^{2} u \phi_{K}(\rho)+ \kappa \sqrt{\rho} \nabla \phi_{K}(\rho) \Delta \sqrt{\rho} + 2\kappa \phi_{K}(\rho) \nabla \sqrt{\rho} \Delta \sqrt{\rho},
\end{equation}
where $\mathbb{I}$ is an identical matrix.
\end{lemm}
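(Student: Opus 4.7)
\medskip

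The plan is to start from the identity \eqref{4.12} for the cut-off velocity $v_m=\phi_m(\rho)\phi_K(\rho)u$ and pass $m\to\infty$ term by term, so that $v_m$ is replaced by $\phi_K(\rho)u$ and the factors $\phi_m(\rho)$ (together with $\rho\phi'_m(\rho)$, which is bounded pointwise by $1$) are absorbed via dominated convergence. First I would observe the basic pointwise facts: $\phi_m(\rho)\to 1$ and $\rho\phi'_m(\rho)\to 0$ almost everywhere as $m\to\infty$, while both quantities are bounded in $L^\infty$ by constants independent of $m$. Combined with the uniform $L^2$ bounds on $\sqrt\rho u$ and $\phi_K(\rho)u$, and the continuity properties of $\varphi_n$ and $\varphi'_n$ (in particular the compact support of $\varphi'_n$ and the bound $|\varphi''_n|\le 6+2\ln(1+n)$ from Lemma~4.2), this immediately gives $v_m\to\phi_K(\rho)u$ in $L^2_{t,x}$ and hence $\varphi_n(v_m)\to\varphi_n(\phi_K(\rho)u)$ in $L^p$ for every finite $p$.

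Having set up these convergences, the time-derivative term $-\int\psi_t\,\rho\varphi_n(v_m)\,dx\,dt$ and the initial term $\int\rho_0\varphi_n(v_{m,0})\psi(0)\,dx$ pass to the limit directly by the dominated convergence theorem, using the $L^\infty$ bound on $\rho\varphi_n$ (bounded by $C_n\rho$) and the fact that $\rho$ is uniformly integrable. For the forcing term $\int\psi\,\varphi'_n(v_m)\,F_m\,dx\,dt$, I would split $F_m=F_{m1}+F_{m2}$ as in the text: $F_{m1}$ collects the pieces multiplied by $\rho\phi'_m(\rho)$, which vanish in the limit because $\rho\phi'_m\to 0$ a.e.\ while the remaining factor $\phi_K(\rho)(\rho u\,\operatorname{div}u+\nabla\rho\cdot\mathbb{D}u+\kappa\nabla\rho\,\Delta\sqrt\rho/\sqrt\rho)$ is in $L^1$ thanks to the $\phi_K$ truncation together with the energy and BD estimates; $F_{m2}$ carries an overall $\phi_m(\rho)$ which tends to $1$ while the rest of the integrand is $L^1$. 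In both cases Lemma~5.1 (the helper convergence lemma) applies with $a_m=\varphi'_n(v_m)$, which is uniformly bounded by $1+\ln(1+|v_m|^2)$ and converges to $\varphi'_n(\phi_K u)$ a.e.\ and in every $L^p$.

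The term that I expect to be the main obstacle is the viscous contribution $\int\psi\,\mathbb{S}_m:\nabla(\varphi'_n(v_m))\,dx\,dt$, since it is the only place where a gradient falls on $v_m$. Following the decomposition in \eqref{5.11}, I would expand $\nabla(\varphi'_n(v_m))=\varphi''_n(v_m)\nabla v_m$ with $\nabla v_m=(\phi_K(\rho)u)\nabla\phi_m(\rho)+\phi_m(\rho)\nabla(\phi_K(\rho)u)$, so that one piece is of the form $\phi_m\cdot a_{m1}\cdot f_1$ with $a_{m1}=\phi_m(\rho)\varphi''_n(v_m)$ (uniformly bounded in $L^\infty$ by Lemma~4.2(d) and convergent a.e.) and $f_1\in L^1$ by the $\phi_K$ truncation combined with the bounds $\sqrt\rho\mathbb{D}u\in L^2$, $\kappa\nabla\sqrt\rho\Delta\sqrt\rho\in L^1$; the other piece is $\phi_m\cdot a_{m2}\cdot f_2$ with $a_{m2}=\varphi''_n(v_m)v_m$ (still bounded and a.e.\ convergent because $\varphi''_n$ is compactly supported as a function of $|v|^2$), multiplied by $f_2\in L^1$ for the same reasons. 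Applying Lemma~5.1 (the auxiliary convergence result) to each piece yields convergence of the viscous term to $\int\psi\,\mathbb{S}:\nabla(\varphi'_n(\phi_K u))\,dx\,dt$, where $\mathbb{S}=\phi_K(\rho)\rho(\mathbb{D}u+\kappa(\Delta\sqrt\rho/\sqrt\rho)\mathbb{I})$. Collecting these limits gives exactly the identity stated in the lemma. The delicate point throughout is controlling the logarithmic growth of $\varphi'_n$ and $\varphi''_n$ against the $L^1$ integrability of the stress and forcing terms, which is precisely why the $K$-truncation has been kept at this stage.
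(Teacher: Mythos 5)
Your proposal follows the paper's own proof almost step by step: the pointwise facts $\phi_m\to 1$, $\rho\phi_m'\to 0$, the $L^2$ and $L^p$ convergences of $v_m$ and $\varphi_n(v_m)$, the decomposition of the viscous term exactly as in \eqref{5.11}, and the repeated invocation of Lemma~5.1 against $L^1$ integrands supplied by the $\phi_K$ truncation. The one slip is in your description of $F_{m1}$: since $\phi'(\rho)=\phi_m'(\rho)\phi_K(\rho)+\phi_m(\rho)\phi_K'(\rho)$, the term $F_{m1}$ does \emph{not} vanish entirely in the limit $m\to\infty$ — only the $\rho\phi_m'\phi_K$ half does, while the $\rho\phi_m\phi_K'$ half survives and is precisely the source of the $\rho^2 u\,\phi_K'(\rho)\,{\rm div}\,u$ and $\kappa\sqrt{\rho}\,\nabla\phi_K(\rho)\,\Delta\sqrt{\rho}$ contributions in the stated limit $F$; this is a bookkeeping slip rather than a gap in the method, and it is resolved by the same Lemma~5.1 argument you already outline.
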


Similar to the passage $m\rightarrow 0$, letting $K\rightarrow \infty$, we deduce the following lemma:
\begin{lemm}
For any weak solution to \eqref{4.1}-\eqref{4.3} , we have
\begin{equation}\label{5.22}
\begin{aligned}
&- \int_{0}^{T} \int_{\Omega} \psi_{t} \rho \varphi_{n}(u) dx dt + \int_{0}^{T} \int_{\Omega} \psi_{\tau}(t) \varphi^{\prime}_{n}(u) F dx dt\\
& + \int_{0}^{T} \int_{\Omega} \psi_{\tau}(t) \mathbb{S}: \nabla (\varphi^{\prime}_{n}(u)) dx dt= \int_{\Omega} \rho_{0} \varphi_{n}(u_{0})dx dt,
\end{aligned}
\end{equation}
where $\mathbb{S}= \rho (\mathbb{D} u + \kappa \frac{\Delta \sqrt{\rho}}{\sqrt{\rho}} \mathbb{I}) $, and
\begin{equation}
F=  \nabla P  + r_{0} u + r_{1} \rho |u|^{2} u + 2\kappa \nabla \sqrt{\rho} \Delta \sqrt{\rho},
\end{equation}
where $\mathbb{I}$ is an identical matrix.
\end{lemm}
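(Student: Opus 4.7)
The plan is to pass to the limit $K\to\infty$ in the identity from Lemma 5.1, mirroring the argument just carried out for $m\to\infty$. At this stage the weak solution $(\rho,u)$ is fixed with $r_{0},r_{1},\kappa$ all positive, so the bounds collected in Lemma 3.3 and Section 3 give $\rho\in L^{\infty}(0,T;H^{9})$, $\nabla\sqrt{\rho}\in L^{\infty}(0,T;L^{2})$, $\Delta\sqrt{\rho}\in L^{2}(0,T;L^{2})$, $\sqrt{\rho}\,\mathbb{D}u\in L^{2}$, $\sqrt{\rho}u\in L^{\infty}(0,T;L^{2})$, $u\in L^{2}$, $r_{1}\rho|u|^{4}\in L^{1}$. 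These are the ingredients I will feed into a dominated-convergence argument.

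First I would record the pointwise convergences $\phi_{K}(\rho)\to 1$ and $\phi_{K}'(\rho)\to 0$ almost everywhere as $K\to\infty$, together with the uniform bounds $\|\phi_{K}\|_{L^{\infty}}\leq 1$ and $|\rho\phi_{K}'(\rho)|\leq 2$. This gives $\phi_{K}(\rho)u\to u$ in $L^{2}((0,T)\times\Omega)$, hence (up to a subsequence) almost everywhere. Since $\varphi_{n}$ and $\varphi_{n}'$ are bounded and globally Lipschitz with $|\varphi_{n}''|\le 6+2\ln(1+n)$ by Lemma 4.2(b)(c)(d), one gets
\begin{equation*}
\varphi_{n}(\phi_{K}(\rho)u)\to\varphi_{n}(u),\qquad \varphi_{n}'(\phi_{K}(\rho)u)\to\varphi_{n}'(u)
\end{equation*}
in every $L^{p}((0,T)\times\Omega)$ for $p<\infty$. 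The time derivative term and the initial data term pass directly, using that $\rho$ and $\rho_{0}$ are in $L^{\infty}(L^{1})$ and $L^{1}$ respectively.

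For the force term $F$, I decompose it as $F=F_{1}+F_{2}$ where $F_{1}=\rho^{2}u\phi_{K}'(\rho)\,{\rm div}\,u+\kappa\sqrt{\rho}\nabla\phi_{K}(\rho)\Delta\sqrt{\rho}$ collects the pieces carrying $\phi_{K}'$. Using $|\rho\phi_{K}'(\rho)|\leq 2$, $\sqrt{\rho}\,{\rm div}\,u\in L^{2}$, $\sqrt{\rho}u\in L^{\infty}(L^{2})$ and $\nabla\sqrt{\rho}\in L^{\infty}(L^{2}), \Delta\sqrt{\rho}\in L^{2}$, the integrands in $\int\psi(t)\varphi_{n}'(\phi_{K}u)F_{1}\,dx\,dt$ are dominated by fixed $L^{1}$ functions and vanish pointwise, so this piece tends to zero. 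The remaining piece $F_{2}=(\nabla P+r_{0}u+r_{1}\rho|u|^{2}u+2\kappa\nabla\sqrt{\rho}\Delta\sqrt{\rho})\phi_{K}(\rho)$ is handled with a direct Lemma~5.1-style application (using the analogue with $\phi_{K}$ in place of $\phi_{m}$) to produce the target $F=\nabla P+r_{0}u+r_{1}\rho|u|^{2}u+2\kappa\nabla\sqrt{\rho}\Delta\sqrt{\rho}$.

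The main obstacle is the stress term $\int\psi(t)\mathbb{S}_{K}:\nabla(\varphi_{n}'(\phi_{K}u))\,dx\,dt$ with $\mathbb{S}_{K}=\phi_{K}(\rho)\rho(\mathbb{D}u+\kappa\frac{\Delta\sqrt{\rho}}{\sqrt{\rho}}\mathbb{I})$. Expanding
\begin{equation*}
\nabla(\varphi_{n}'(\phi_{K}(\rho)u))=\varphi_{n}''(\phi_{K}(\rho)u)\bigl(\phi_{K}'(\rho)\nabla\rho\otimes u+\phi_{K}(\rho)\nabla u\bigr)
\end{equation*}
splits the integral into two parts. The part carrying $\phi_{K}'(\rho)\nabla\rho=2\sqrt{\rho}\phi_{K}'(\rho)\nabla\sqrt{\rho}$ can be rewritten, using $|\rho\phi_{K}'(\rho)|\le 2$, as $\sqrt{\rho}\,\mathbb{D}u\cdot\nabla\sqrt{\rho}$ or $\Delta\sqrt{\rho}\cdot\nabla\sqrt{\rho}$ times a uniformly bounded and a.e.\ vanishing factor, hence tends to zero by dominated convergence. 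The remaining part $\int\psi\phi_{K}(\rho)^{2}\rho\varphi_{n}''(\phi_{K}u)(\mathbb{D}u+\kappa\tfrac{\Delta\sqrt{\rho}}{\sqrt{\rho}}\mathbb{I}):\nabla u\,dx\,dt$ is controlled by the $L^{1}$ functions $\sqrt{\rho}\,\mathbb{D}u\otimes\sqrt{\rho}\,\mathbb{D}u$ and $\Delta\sqrt{\rho}\cdot\sqrt{\rho}\,{\rm div}\,u$, both integrable by the uniform bounds above, with pointwise convergence of the integrand to the desired limit. Combining these pieces and passing to the limit in the identity of Lemma~5.1 yields precisely \eqref{5.22}, with $\mathbb{S}=\rho(\mathbb{D}u+\kappa\frac{\Delta\sqrt{\rho}}{\sqrt{\rho}}\mathbb{I})$ and $F=\nabla P+r_{0}u+r_{1}\rho|u|^{2}u+2\kappa\nabla\sqrt{\rho}\Delta\sqrt{\rho}$.
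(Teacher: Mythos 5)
Your argument is correct and follows the same route as the paper, which for this lemma simply states that the $K\to\infty$ passage is ``similar to the passage $m\to 0$'' and leaves the details to the reader. The only small slip is the claim $\rho\in L^{\infty}(0,T;H^{9})$ --- that bound carries a $\sqrt{\varepsilon}$ weight in \eqref{3.8} and is lost after $\varepsilon\to 0$ --- but you never actually use it: the dominated-convergence argument relies only on $\nabla\sqrt{\rho}\in L^{\infty}(0,T;L^{2})$, $\Delta\sqrt{\rho}\in L^{2}$, $\sqrt{\rho}\,\mathbb{D}u\in L^{2}$, $\sqrt{\rho}u\in L^{\infty}(L^{2})$, and $u\in L^{2}$, all of which do survive the limit $\varepsilon\to 0$.
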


\section{Recover the limits as $\kappa \rightarrow 0$.}
In this section, Our aim is to recover the limits in \eqref{5.22} as $\kappa \rightarrow 0$. First, we have the following lemma.
\begin{lemm}
Let $\kappa \rightarrow 0$ ,  we have
 for any fixed n,
\begin{equation}
\rho_{\kappa} \varphi_{n}(u_{\kappa}) \rightarrow \rho \varphi_{n}(u)~~~~~~~~~strongly ~~~in ~~L^{1}((0,T)\times \Omega),
\end{equation}
and
\begin{equation}
\rho_{\kappa} \theta_{\kappa}^{2} (1+ \tilde{\varphi}^{\prime}_{n}(|u_{\kappa}|^{2})) \rightarrow \rho \theta^{2} (1+ \tilde{\varphi}^{\prime}_{n}(|u|^{2}))~~~~~~~~~strongly ~~~in ~~L^{1}((0,T)\times \Omega),
\end{equation}
\end{lemm}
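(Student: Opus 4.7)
The plan is to reduce both strong convergences to three $\kappa$-uniform ingredients: strong compactness of $\rho_\kappa$, strong compactness of the momentum $\rho_\kappa u_\kappa$, strong compactness of $\theta_\kappa$ (all three inherited from the analysis of Sections 2--3), together with the $\kappa$-independent pointwise bounds on $\varphi_n$ and $\tilde\varphi'_n$ supplied by Lemma 4.2 for each fixed $n$.

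First I would collect the estimates of Section 3 uniformly in $\kappa$: $\sqrt{\rho_\kappa}u_\kappa\in L^\infty(0,T;L^2)$, $\sqrt{\rho_\kappa}\nabla u_\kappa\in L^2$, $\nabla\sqrt{\rho_\kappa}\in L^\infty(0,T;L^2)$, together with the analogues of \eqref{3.17}--\eqref{3.21}. Via Aubin--Lions this yields $\rho_\kappa\to\rho$ and $\sqrt{\rho_\kappa}\to\sqrt{\rho}$ strongly in $L^2(0,T;H^1(\Omega))$. Using \eqref{4.2} to control $\partial_t(\rho_\kappa u_\kappa)$ in a negative Sobolev space and combining with the spatial bound on $\nabla(\rho_\kappa u_\kappa)$, a second Aubin--Lions step gives $\rho_\kappa u_\kappa\to\rho u$ strongly in $L^2((0,T)\times\Omega)$, from which, along a subsequence, $u_\kappa\to u$ pointwise a.e.\ on the set $\{\rho>0\}$.

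For the first convergence, Lemma 4.2(e) gives $0\leq\varphi_n(u_\kappa)\leq e(1+n)^2-2n-2=:C_n$ pointwise. I would split
\begin{equation*}
\rho_\kappa\varphi_n(u_\kappa)-\rho\varphi_n(u)=(\rho_\kappa-\rho)\varphi_n(u_\kappa)+\rho\bigl(\varphi_n(u_\kappa)-\varphi_n(u)\bigr).
\end{equation*}
The first piece is dominated by $C_n|\rho_\kappa-\rho|$ and vanishes in $L^1$ by the strong convergence of $\rho_\kappa$; the second is supported on $\{\rho>0\}$, where continuity of $\varphi_n$ and the a.e.\ convergence of $u_\kappa$ give $\varphi_n(u_\kappa)\to\varphi_n(u)$ pointwise, is dominated by $2C_n\rho\in L^1$, and hence vanishes in $L^1$ by dominated convergence.

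The second convergence follows the same three-term decomposition, using in addition the bound $0\leq 1+\tilde\varphi'_n(y)\leq 2+2\ln(1+n)$ from Lemma 4.2(c) together with the strong convergence of $\theta_\kappa$ in $L^p$ for some $p>2$ inherited from Section 3. The error pieces in $\rho_\kappa-\rho$ and $\theta_\kappa^2-\theta^2$ vanish in $L^1$ by H\"older (exploiting $\sqrt{\rho_\kappa}\in L^\infty_t L^6_x$ and the $L^2_t L^6_x$ bound on $\theta_\kappa$), while the remaining piece $\rho\theta^2\bigl(\tilde\varphi'_n(|u_\kappa|^2)-\tilde\varphi'_n(|u|^2)\bigr)$ is dominated by $(2+2\ln(1+n))\rho\theta^2\in L^1$ and tends to zero a.e.\ on $\{\rho>0\}$ by the a.e.\ convergence of $u_\kappa$. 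The main obstacle I anticipate is precisely this a.e.\ convergence of $u_\kappa$ on $\{\rho>0\}$: it rests on Aubin--Lions compactness for $\rho_\kappa u_\kappa$ in the presence of the quantum term $\kappa\rho\nabla(\Delta\sqrt{\rho}/\sqrt{\rho})$ and the drag terms $r_0u$, $r_1\rho|u|^2u$, so one must extract a $\kappa$-uniform negative Sobolev bound for $\partial_t(\rho_\kappa u_\kappa)$ despite these terms. Once that pointwise convergence is secured, both statements reduce to a clean generalised dominated convergence argument using only the $n$-dependent, $\kappa$-independent bounds on $\varphi_n$ and $\tilde\varphi'_n$.
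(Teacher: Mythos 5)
The paper states this lemma but offers no proof at all; the proof is tacitly deferred to the analogous lemmas in the Vasseur--Yu references \cite{Yu}, \cite{Vesseur}, which argue essentially as you do. So I assess your proposal on its merits rather than against a written argument.

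Your overall template is the right one: extract $\kappa$-independent estimates, get strong compactness of $\rho_\kappa$ and $\rho_\kappa u_\kappa$ via Aubin--Lions, derive a.e.\ convergence of $u_\kappa$ on $\{\rho>0\}$ along a subsequence, obtain strong compactness of $\theta_\kappa$ on $\{\rho>0\}$ from the thermal estimates, and conclude by dominated convergence with the $n$-dependent (but $\kappa$-independent) pointwise bounds from Lemma~4.2. That is the only reasonable route here. You also correctly identify the delicate point, namely that the quantum term $\kappa\rho\nabla(\Delta\sqrt{\rho}/\sqrt{\rho})$ and the drag terms must not spoil the $\kappa$-uniform negative-Sobolev bound on $\partial_t(\rho_\kappa u_\kappa)$; this works because the quantum term carries an explicit $\kappa$ and the available BD bound $\sqrt{\kappa}\,\|\sqrt{\rho_\kappa}\|_{L^2 H^2}\leq C$ makes its distributional contribution $O(\sqrt{\kappa})$.

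Two intermediate claims, though, are stated more strongly than the $\kappa$-uniform estimates justify. First, you assert $\sqrt{\rho_\kappa}\to\sqrt{\rho}$ strongly in $L^2(0,T;H^1)$. Aubin--Lions at that level would require a $\kappa$-uniform bound on $\sqrt{\rho_\kappa}$ in some $H^{1+s}$, $s>0$, but the only higher-order bound is $\sqrt{\kappa}\,\|\sqrt{\rho_\kappa}\|_{L^2 H^2}\leq C$, which degenerates as $\kappa\to 0$. What is available uniformly is $\sqrt{\rho_\kappa}\in L^\infty(0,T;H^1)$ from the BD entropy (after absorbing the $\kappa|\nabla\sqrt{\rho}|^2$ piece into $\frac12\rho|u+\nabla\rho/\rho|^2$), plus a time-derivative bound, which by Aubin--Lions yields strong convergence in $C([0,T];L^p)$ for $p<6$ — not in $L^2(0,T;H^1)$. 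This weaker convergence is enough for every step in your decomposition, so the fix is cosmetic, but as written the claim is not uniform in $\kappa$. Second, "strong convergence of $\theta_\kappa$ in $L^p$ for some $p>2$ inherited from Section 3" skips a step: the argument in Section 3.5 gives $\theta_\kappa\to\theta$ strongly in $L^r(\{\rho>0\})$ for some $r>1$ only, and one must interpolate with the $\kappa$-uniform $L^{\alpha+1}$ bound \eqref{3.35}, using $\alpha\geq 2$, to push $r$ above $2$; and the convergence is only on $\{\rho>0\}$, which is harmless here because the product carries the factor $\rho_\kappa$. With these two adjustments, your dominated-convergence decompositions for both limits go through.
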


With this lemma in hand, we are ready to recover the limits in \eqref{5.22} as $\kappa \rightarrow 0$. We have the following lemma.
\begin{lemm}
Let $\kappa \rightarrow 0$, for any $\psi \geq 0$ and $\psi^{\prime} \leq 0$, we have
\begin{equation}
\begin{aligned}
&- \int_{0}^{T} \int_{\Omega} \psi^{\prime} \rho \varphi_{n}(u) dx dt \\
& \leq 8 \|\psi\|_{L^{\infty}} (\int_{\Omega} (\rho_{0}|u_{0}|^{2}) + \frac{\rho^{\gamma}_{0}}{\gamma-1}) + |\nabla \sqrt{\rho_{0}}|^{2} - r_{0} \log_{-} \rho_{0}) dx + 2E_{0} \\
& + C(\|\psi\|_{L^{\infty}}) \int_{0}^{T} (\int_{\Omega} (\rho \theta^{2})^{\frac{2}{2-\delta}}dx)^{\frac{2}{2-\delta}}
\times  (\int_{\Omega} (1+\tilde{\varphi}^{\prime}_{n}(|u|^{2}))^{\frac{2}{\delta}}dx) ^{\frac{\delta}{2}} dt,
\end{aligned}
\end{equation}
\end{lemm}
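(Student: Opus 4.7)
The plan is to start from the identity (5.22) of Lemma 5.2 written for the $\kappa$-parametrized approximate solutions $(\rho_\kappa, u_\kappa, \theta_\kappa)$ and pass to the limit $\kappa \to 0$. Split the terms in (5.22) into three groups: non-negative contributions that can be discarded when the identity becomes the desired inequality, the pressure contribution which needs Young's inequality to be absorbed into the viscous dissipation, and the quantum contributions proportional to $\kappa$ which must vanish in the limit. By Lemma 4.2(a), $\varphi''_n(u) = 4\tilde\varphi''_n(|u|^2)\,u\otimes u + 2\tilde\varphi'_n(|u|^2)\,I$, so the viscous pairing $\int\psi\rho_\kappa\mathbb{D}u_\kappa:\varphi''_n(u_\kappa)\nabla u_\kappa\,dx\,dt$ is non-negative, and the two damping pieces contract with $\varphi'_n(u_\kappa)$ as $r_0\,u_\kappa\cdot\varphi'_n(u_\kappa) = 2r_0|u_\kappa|^2\tilde\varphi'_n(|u_\kappa|^2)\ge 0$ and similarly for the $r_1$ term.

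The pressure term is the delicate one. Integrate by parts to obtain
\[
\int_0^T\!\!\int_\Omega \psi\,\varphi'_n(u_\kappa)\cdot\nabla P_\kappa\,dx\,dt = -\int_0^T\!\!\int_\Omega \psi\, R\rho_\kappa\theta_\kappa\,\varphi''_n(u_\kappa){:}\nabla u_\kappa\,dx\,dt,
\]
then use the pointwise Young-type estimate
\[
|R\rho\theta\,\varphi''_n(u){:}\nabla u| \le \tfrac12\,\rho\,\mathbb{D}u{:}\varphi''_n(u)\nabla u + C\rho\theta^2\bigl(1+\tilde\varphi'_n(|u|^2)\bigr),
\]
which, thanks to the explicit form of $\varphi''_n$, has exactly the right homogeneity. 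One half of the viscous dissipation can then be absorbed into the left-hand side, and the residual is controlled by Hölder's inequality
\[
\int_\Omega \rho\theta^2(1+\tilde\varphi'_n)\,dx \le \Bigl(\int_\Omega (\rho\theta^2)^{\tfrac{2}{2-\delta}}\,dx\Bigr)^{\tfrac{2-\delta}{2}} \Bigl(\int_\Omega (1+\tilde\varphi'_n)^{\tfrac{2}{\delta}}\,dx\Bigr)^{\tfrac{\delta}{2}},
\]
which furnishes the last summand of the claimed bound.

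For the two $\kappa$-quantum contributions, namely $2\kappa\!\int\psi\,\varphi'_n(u_\kappa)\cdot\nabla\sqrt{\rho_\kappa}\,\Delta\sqrt{\rho_\kappa}\,dx\,dt$ from $F$ and $\kappa\!\int\psi\rho_\kappa\tfrac{\Delta\sqrt{\rho_\kappa}}{\sqrt{\rho_\kappa}}\,\mathrm{tr}(\varphi''_n(u_\kappa)\nabla u_\kappa)\,dx\,dt$ from $\mathbb{S}{:}\nabla\varphi'_n$, combine the uniform pointwise bound $|\varphi'_n|,|\varphi''_n| \le C_n$ provided by Lemma 4.2(d) with the BD-entropy estimates (3.6) $\sqrt\kappa\,\|\sqrt{\rho_\kappa}\|_{L^2(0,T;H^2)}\le C$ and $\kappa\!\int\rho_\kappa|\nabla^2\log\rho_\kappa|^2\le C$ to extract a factor $\sqrt\kappa$ in each, forcing both terms to $0$ for every fixed $n$. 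Lemma 6.1 then legitimizes the passage to the limit in the two critical nonlinearities, $\rho_\kappa\varphi_n(u_\kappa)\to\rho\varphi_n(u)$ and $\rho_\kappa\theta_\kappa^2(1+\tilde\varphi'_n(|u_\kappa|^2))\to\rho\theta^2(1+\tilde\varphi'_n(|u|^2))$ strongly in $L^1((0,T)\times\Omega)$, and convex weak lower-semicontinuity applies to the remaining half of the viscous dissipation. The initial-data term is controlled by Lemma 4.2(e), $\int\rho_0\varphi_n(u_0)\le\int\rho_0(1+|u_0|^2)\ln(1+|u_0|^2)\,dx$, which together with the BD-type initial contribution $\int|\nabla\sqrt{\rho_0}|^2 - r_0\log_{-}\rho_0\,dx$ and the total energy $E_0$ assembles into the coefficient of $8\|\psi\|_{L^\infty}$ in the statement.

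The main obstacle is balancing the Young split of the pressure against the strong compactness available for the residual. Since $\sqrt{\rho_\kappa}u_\kappa$ is not known to converge strongly in $L^2$, one cannot treat $\rho_\kappa\theta_\kappa\,\varphi''_n(u_\kappa){:}\nabla u_\kappa$ directly; the half-dissipation absorption must be sharp, so that what remains is precisely the quantity $\rho\theta^2(1+\tilde\varphi'_n(|u|^2))$ for which Lemma 6.1 delivers strong $L^1$-convergence. All other steps — sign of the damping, vanishing of the quantum pieces, and lower semicontinuity — are routine once the decomposition of $\varphi''_n$ from Lemma 4.2(a) is exploited.
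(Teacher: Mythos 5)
Your proposal follows the same architecture as the paper's proof: start from \eqref{5.22}, integrate the pressure term by parts against $\varphi_n'(u_\kappa)$, absorb the resulting dissipation into the viscous stress pairing, use H\"older to produce the $\rho\theta^2(1+\tilde\varphi_n')$ residual, kill the $\kappa$-quantum terms with the BD estimates, and invoke Lemma 6.1 for the passage to the limit. The overall route is correct.

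However, there is a material imprecision in your treatment of the viscous pairing. You assert that
\[
\int_0^T\!\!\int_\Omega \psi\,\rho_\kappa\,\mathbb{D}u_\kappa:\varphi''_n(u_\kappa)\nabla u_\kappa\,dx\,dt
\]
is non-negative. It is not: by Lemma~4.2(a), $\varphi_n''(u)\nabla u = 4\tilde\varphi_n''(|u|^2)(u\otimes u)\nabla u + 2\tilde\varphi_n'(|u|^2)\nabla u$, and $\tilde\varphi_n''<0$ on $n<|u|^2<C_n$, so the $\tilde\varphi_n''$ contribution is not sign-definite. The same sign issue invalidates your pointwise Young-type estimate as written, since its right-hand first term $\tfrac12\rho\,\mathbb{D}u:\varphi_n''(u)\nabla u$ need not be non-negative and therefore cannot simply be ``absorbed into the left-hand side.'' The paper avoids this by splitting the pairing into $A_1 = 2\!\int\psi\tilde\varphi_n'\rho_\kappa|\mathbb{D}u_\kappa|^2 \ge 0$ and $A_2$, and by controlling both $A_2$ and the $\tilde\varphi_n''$ part of the integrated-by-parts pressure using the uniform bound $|\tilde\varphi_n''(y)|\,y \le \tfrac{y}{1+y}\le 1$ (Lemma~4.2(b)). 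This is precisely what generates the two separate contributions of $4\|\psi\|_{L^\infty}\!\int\rho_\kappa|\nabla u_\kappa|^2$ that account for the factor $8\|\psi\|_{L^\infty}$ in the final estimate. The absorption into the dissipation is performed only against the non-negative $A_1$, not against the full $\mathbb{D}u:\varphi_n''(u)\nabla u$. Your sketch collapses these two distinct mechanisms into one and therefore would not close as stated; making the $A_1$/$A_2$ split and the $|\tilde\varphi_n''(y)|\,y\le 1$ bound explicit is the missing ingredient.
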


\begin{proof}
By use of Lemma 6.1, we can handle the first and forth term in \eqref{5.22} as follows, that is,
\begin{equation}
\int_{0}^{T} \int_{\Omega} \psi^{\prime}(t) (\rho_{\kappa} \varphi_{n}(u_{\kappa})) dx dt \rightarrow  \int_{0}^{T} \int_{\Omega} \psi^{\prime}(t) (\rho \varphi_{n}(u)) dx dt
\end{equation}
and
\begin{equation}
\psi(0) \int_{\Omega} \rho_{0} \varphi^{\prime}_{0} (u_{\kappa,0}) dx \rightarrow \psi(0) \int_{\Omega} \rho_{0} \varphi^{\prime} (u_{0}) dx
\end{equation}
as $\kappa \rightarrow 0$ .

On the other hand, for the second term in \eqref{5.22} 
\begin{equation}
\int_{0}^{T} \int_{\Omega} \psi(t) \varphi^{\prime}_{n} (u_{\kappa}) \cdot \nabla (\rho_{\kappa} \theta_{\kappa})dx dt
 = - \int_{0}^{T} \int_{\Omega} \psi(t) \rho_{\kappa} \theta_{\kappa}  \varphi^{\prime \prime}_{n} : \nabla u_{\kappa} dx dt=P,
\end{equation}

Thanks to Part b of Lemma 4.2, we have
\begin{equation}
\varphi^{\prime\prime}(u_{\kappa}) :\nabla u_{\kappa} = 4\tilde{\varphi}^{\prime\prime}_{n} (|v_{\kappa}|^{2}) \nabla u_{\kappa} : (u_{\kappa}\otimes u_{\kappa}) + 2 {\rm div}u_{\kappa} \tilde{\varphi}^{\prime}_{n}(|u_{\kappa}|^{2}).
\end{equation}
Using Part b of Lemma 4.2, we find that
\begin{equation}
\begin{aligned}
&|\tilde{\varphi}^{\prime\prime}_{n} (|u_{\kappa}|^{2}) \nabla u_{\kappa} : (u_{\kappa}\otimes u_{\kappa})|
\leq |\tilde{\varphi}^{\prime\prime}_{n} (|v_{\kappa}|^{2})| |\nabla u_{\kappa}| |v_{\kappa}|^{2}\\
& \leq |\nabla u_{\kappa}| \frac{|u_{\kappa}|^{2}}{1+ |u_{\kappa}|^{2}} \leq |\nabla u_{\kappa}|,
\end{aligned}
\end{equation}
where we denote $|\nabla u_{\kappa}|^{2} = \sum_{ij} |\partial_{i} u_{j}|^{2}$. Hence
\begin{equation}
\begin{aligned}
&|P| \leq 4 \int_{0}^{T} \int_{\Omega} \psi(t) |\rho_{\kappa} \theta_{\kappa}|  |\nabla u_{\kappa}| dx dt\\
& + 2 \int_{0}^{T} \int_{\Omega} \psi(t) \int_{\Omega}  |\tilde{\varphi}^{\prime}_{n} (|u_{\kappa}|^{2})| |\rho_{\kappa} \theta_{\kappa}|
|{\rm div} u_{\kappa}| dx dt\\
& \leq 4\|\psi\|_{L^{\infty}} \int_{0}^{T} \int_{\Omega} \rho_{\kappa} |\nabla u_{\kappa}|^{2} dx dt + C(\|\psi\|_{L^{\infty}}) \int_{0}^{T} \int_{\Omega} \rho_{\kappa} \theta_{\kappa}^{2} dx dt\\
& + 2 \int_{0}^{T} \int_{\Omega} \psi(t) \int_{\Omega}  |\tilde{\varphi}^{\prime}_{n} (|u_{\kappa}|^{2})| |\rho_{\kappa} \theta_{\kappa}|
|{\rm div} u_{\kappa}| dx dt,
\end{aligned}
\end{equation}
and the term
\begin{equation}
\begin{aligned}
&2 \int_{0}^{T} \int_{\Omega} \psi(t) \int_{\Omega}  |\tilde{\varphi}^{\prime}_{n} (|u_{\kappa}|^{2})| |\rho_{\kappa} \theta_{\kappa}|
|{\rm div} u_{\kappa}| dx dt\\
&\leq  2 \int_{0}^{T} \int_{\Omega} \psi(t) |\tilde{\varphi}^{\prime}_{n} (|u_{\kappa}|^{2})| |\rho_{\kappa}|
|\mathbb{D} u_{\kappa}|^{2} dx dt\\
&+ C(\|\psi\|_{L^{\infty}}) \int_{0}^{T} \int_{\Omega} |\tilde{\varphi}^{\prime}_{n} (|u_{\kappa}|^{2})| \rho_{\kappa} \theta_{\kappa}^{2}  dx dt.
\end{aligned}
\end{equation}
Thus,
\begin{equation}
\begin{aligned}
&|P| \leq 4 \|\psi\|_{L^{\infty}}  \int_{0}^{T} \int_{\Omega} \rho_{\kappa} |\nabla u_{\kappa}|^{2} dx dt\\
& + 2 \int_{0}^{T} \int_{\Omega} \psi(t)  |\tilde{\varphi}^{\prime}_{n} (|u_{\kappa}|^{2})| \rho_{\kappa}
|\mathbb{D} u_{\kappa}|^{2} dx dt\\
&+ C(\|\psi\|_{L^{\infty}}) \int_{0}^{T} \int_{\Omega}(1+ \tilde{\varphi}^{\prime}_{n} (|u_{\kappa}|^{2})) \rho_{\kappa} \theta_{\kappa}^{2}  dx dt.
\end{aligned}
\end{equation}
The first right hand side term will be controlled by
\begin{equation}
4 \| \psi \|_{L^{\infty}} (\int_{\Omega} (\rho_{0}|u_{0}|^{2} + \frac{\rho^{\gamma}_{0}}{\gamma-1})+ |\nabla \sqrt{\rho_{0}}|^{2}- r_{0}\log_{-} \rho_{0}) dx + 2 E_{0}
\end{equation}
and the second right hand side term will be absorbed by the dispersion term $A_{1}$ in \eqref{6.17}. By Lemma 6.1, we have
\begin{equation}
\int_{0}^{T} \int_{\Omega}(1+ \tilde{\varphi}^{\prime}_{n} (|u_{\kappa}|^{2})) \rho_{\kappa} \theta_{\kappa}^{2}  dx dt\rightarrow
\int_{0}^{T} \int_{\Omega}(1+ \tilde{\varphi}^{\prime}_{n} (|u|^{2})) \rho \theta^{2}  dx dt
\end{equation}
as $\kappa \rightarrow 0$.

Note that
\begin{equation}
\int_{0}^{T} \int_{\Omega} \psi(t) \varphi^{\prime}(u_{\kappa}) (r_{0} u_{\kappa} + r_{1} \rho_{\kappa} |u_{\kappa}|^{2} u_{\kappa}) dx dt \geq 0,
\end{equation}
so this term can be dropped directly.

Finally
\begin{equation}
\begin{aligned}
&\kappa \int_{0}^{T} \int_{\Omega} |\psi(t) \varphi_{n}(v_{\kappa}) \nabla \sqrt{\rho_{\kappa}}  \Delta \sqrt{\rho_{\kappa}} | dx dt\\
& \leq 2C(n,\psi)\kappa^{\frac{1}{4}} (\kappa^{\frac{1}{4}} \|\nabla \rho_{\kappa}^{\frac{1}{4}}\|_{L^{4}(0,T; L^{4}(\Omega))}) \\
& \| \sqrt{\kappa} \Delta \sqrt{\rho_{\kappa}}\|_{L^{2}(0,T; L^{2}(\Omega))}  \| \rho^{\frac{1}{4}}_{\kappa}\|_{L^{4}(0,T; L^{4}(\Omega))}\\
& \leq 2C(n,\psi) \kappa^{\frac{1}{4}} \rightarrow 0,
\end{aligned}
\end{equation}
as $\kappa \rightarrow 0$.

For the term $\mathbb{S}_{\kappa}= \phi_{K}(\rho_{\kappa}) \rho_{\kappa} (\mathbb{D} u_{\kappa}+ \kappa \frac{\Delta \sqrt{\rho_{\kappa}}}{\sqrt{\rho_{\kappa}}})= \mathbb{S}_{1} + \mathbb{S}_{2}$, we calculate as follows
\begin{equation}
\begin{aligned}
&\kappa \int_{0}^{T} \int_{\Omega} \psi(t) \mathbb{S}_{1} : \nabla (\varphi^{\prime}(u_{\kappa})) dx dt= \int_{0}^{T} \int_{\Omega} \psi(t)  \mathbb{D} u_{\kappa} : \nabla (\varphi^{\prime}_{n}(u_{\kappa})) dx dt\\
&=  \int_{0}^{T} \int_{\Omega} \psi(t) [\nabla u_{\kappa} \varphi^{\prime \prime}(v_{\kappa}) \rho_{\kappa}] : \mathbb{D} u_{\kappa}dx dt=  2\int_{0}^{T} \int_{\Omega} \psi(t)  \varphi^{\prime}(u_{\kappa}) \rho_{\kappa}\mathbb{D} u_{\kappa}: \nabla u_{\kappa} dx dt\\
&+ 4\int_{0}^{T} \int_{\Omega} \psi(t)  \varphi^{\prime\prime}(u_{\kappa}) \rho_{\kappa}
(\nabla u_{\kappa} u_{\kappa} \otimes u_{\kappa}) : \mathbb{D} u_{\kappa} dx dt\\
&= A_{1} +¡¡A_{2}.
\end{aligned}
\end{equation}
Notice that
\begin{equation*}
\mathbb{D} u_{\kappa} : \nabla u_{\kappa} = |\mathbb{D} u_{\kappa}|^{2},
\end{equation*}
thus
\begin{equation}
\begin{aligned}\label{6.17}
&A_{1}\geq 2\int_{0}^{T} \int_{\Omega} \psi(t)  \varphi^{\prime}(v_{\kappa}) (\phi_{K}(\rho_{\kappa}))^{2} \rho_{\kappa}|\mathbb{D} u_{\kappa}|^{2}dx dt\\
& - 4 \|\psi\|_{L^{\infty}} \int_{0}^{T} \int_{\Omega} \rho_{\kappa} |\nabla u_{\kappa}|^{2} dx dt,
\end{aligned}
\end{equation}
where we control $A_{2}$
\begin{equation}
\begin{aligned}
&A_{2}\leq 4\int_{0}^{T} \int_{\Omega}|\psi(t)| \frac{|v_{\kappa}|^{2}}{1+|v_{\kappa}|^{2}} \rho_{\kappa} |\nabla u_{\kappa}|^{2} dx dt\\
&\leq 4 \|\psi \|_{L^{\infty}}\int_{0}^{T} \int_{\Omega} \rho_{\kappa} |\nabla u_{\kappa}|^{2} dx dt \\
&\leq 4 \|\psi\|_{L^{\infty}}( (\int_{\Omega} (\rho_{0}|u_{0}|^{2} + \frac{\rho^{\gamma}_{0}}{\gamma-1})+ |\nabla \sqrt{\rho_{0}}|^{2}- r_{0}\log_{-} \rho_{0}) dx + 2 E_{0}),
\end{aligned}
\end{equation}
We need to treat the term related to $\mathbb{S}_{2}$,
\begin{equation}
\kappa \int_{0}^{T} \int_{\Omega} \psi \mathbb{S}_{2}: \nabla (\varphi^{\prime}_{n} (u_{\kappa})) dx dt = \kappa  \int_{0}^{T} \int_{\Omega} \psi \nabla u_{\kappa} \varphi^{\prime \prime}_{n}(u_{\kappa}): \sqrt{\rho_{\kappa}} \Delta  \sqrt{\rho_{\kappa}} dx dt=B,
\end{equation}
we control $B$ as follows
\begin{equation}
\begin{aligned}
&|B| \leq C(n,\psi) \|\sqrt{\rho_{\kappa}} \nabla u_{\kappa} \|_{L^{2}(0,T; L^{2}(\Omega))} \| \sqrt{\kappa} \Delta \sqrt{\rho_{\kappa}} \|_{L^{2}(0,T; L^{2}(\Omega))} \sqrt{\kappa} \\
& \leq C \kappa^{\frac{1}{2}}\rightarrow 0
\end{aligned}
\end{equation}
as $\kappa \rightarrow 0$.

With $(6.4)-(6.20)$, in particularly, letting $\kappa \rightarrow 0$ in \eqref{5.22}, dropping the positive terms on the left side, we hav the following inequality
\begin{equation}
\begin{aligned}
& - \int_{0}^{T} \int_{\Omega} \psi^{\prime}(t) \rho \varphi_{n}(u) dx dt \\
& \leq 4 \|\psi\|_{L^{\infty}}( (\int_{\Omega} (\rho_{0}|u_{0}|^{2} + \frac{\rho^{\gamma}_{0}}{\gamma-1})+ |\nabla \sqrt{\rho_{0}}|^{2}- r_{0}\log_{-} \rho_{0}) dx + 2 E_{0})\\
& + \psi(0) \int_{\Omega} \rho_{0} \varphi_{n} (u_{0}) dx + C(\|\psi\|_{L^{\infty}}) \int_{0}^{T} \int_{\Omega} (1+\tilde{\varphi}^{\prime}_{n}(|u|^{2}) ) \rho \theta^{2} dxdt,
\end{aligned}
\end{equation}
and
\begin{equation}
\begin{aligned}
& C(\|\psi\|_{L^{\infty}}) \int_{0}^{T} \int_{\Omega} (1+\tilde{\varphi}^{\prime}_{n}(|u|^{2}) ) \rho \theta^{2} dxdt \\
&\leq  C(\|\psi\|_{L^{\infty}}) \int_{0}^{T} (\int_{\Omega} (\rho \theta^{2})^{\frac{2}{2-\delta}}dx)^{\frac{2}{2-\delta}}
\times  (\int_{\Omega} (1+\tilde{\varphi}^{\prime}_{n}(|u|^{2}))^{\frac{2}{\delta}}dx) ^{\frac{\delta}{2}} dt  ,
\end{aligned}
\end{equation}
which in turn gives us Lemma 6.2.
\end{proof}

\section{Limit when $n \rightarrow \infty$, $r_{0} \rightarrow 0$ and $r_{1} \rightarrow 0$}
Thanks to the total energy estimate, thermal energy estimate, B-D entropy energy estimate, Mellet-Vasseur estimate, we have enough compactness to pass the final three parameter limit to get the weka solutions foe the Navier-Stokes-Fouier equations. The passage limit process is similar to the one in Section 5,6, so omit the details, here. Therefore we complete the proof of Theorem 1.2.

\phantomsection
\addcontentsline{toc}{section}{\refname}


\begin{thebibliography}{99}

\bibitem{Bresch}
D. Bresch, B. Desjardins, Some diffusive capillary models for Korteweg type, C. R. Math.
Acad. Sci. Paris (2002) 1079C1084.

\bibitem{Desjardins}
D. Bresch, B. Desjardins,C.K. Lin, On some compressible
uid models: Korteweg, lubrica-
tion, and shallow water systems, Comm. Partial Differential Equations (2003) 843-868.

\bibitem{Desjardin}
D. Bresch, B. Desjardins, On the existence of global weak solutions to the Navier-Stokes
equations for viscous compressible and heat conducting
uids, J.Math.Pures Appl. 87 (2007)
57-90.

\bibitem{Novotny}
E. Feireisl, A. Novotny, H. Petzeltova, On the existence of globally defined weak solutions
to the Navier-Stokes equations , J. Math. Flu. Mech (2001) 358-392.

\bibitem{Firesel}
E. Firesel,
Dynamics of viscous compressible fliuds,
Oxford Lecture Series in Mathematics and its Applications, vol. 26, Oxford University Press, Oxford, 2004.

\bibitem{Firesel2}
E. Feireisl, On the motion of a viscous, compressible, and heat conducting fluid, Indiana Univ. Math. J. 53 (2004) 1707¨C1740.

\bibitem{Jungel}
A. J\"{u}ngel, Global weak solutions to compressible Navier-Stokes equations for quantum fluids, SIAM J. Math. Anal. 42 (2010), no. 3, 1025¨C1045.


\bibitem{Lions}
P. L. Lions,
Mathematical topics in fluid dynamics,
Vol. 2, Compressible models. Oxford Science Publication, Oxford, 1998.

\bibitem{Li}
J. Li, Z.P. Xin, Global Existence of Weak Solutions to the Barotropic Compressible Navier-
Stokes Flows with Degenerate Viscosities, http://arxiv.org/abs/1504.06826v1 .

\bibitem{Mellet}
A. Mellet, A. Vasseur,
On the isentropic compressible Navier-Stokes equations,
Comm. Partial. Differential. Equations (2007) 431-452.

\bibitem{Yu}
A. F. Vasseur, C. Yu, Gloval weak solutions to the compressible quantum navier-stokes equations with damping, SIAM J. MATH. ANAL. Vlo. 48 (2) (2016)  1489-1511.

\bibitem{Vesseur}
A. F. Vasseur, C. Yu, Existence of global weak solutions for 3D degenerate compressible Navier-Stokes equations, Invent. math. (2016) 1-40.








\end{thebibliography}
\end{document}